\newcommand{\R}{{\mathbb R}}
\newcommand{\N}{{\mathbb N}}
\newcommand{\Hf}{{\mathbb{H}}}
\newcommand{\cB}{{\mathcal B}}
\newcommand{\cR}{{\mathcal R}}
\newcommand{\uu}{\mathbf{u}}
\newcommand{\vv}{\mathbf{v}}
\newcommand{\ww}{\mathbf{w}}
\newcommand{\cc}{\mathbf{c}}
\newcommand{\h}{\mathbf{h}}
\newcommand{\g}{\mathbf{ g}}
\newcommand{\qq}{\mathbf{q}}
\newcommand{\pp}{\mathbf{p}}
\newcommand{\wuuj}{\tilde{\mathbf{u}}_j}
\newcommand{\wuuz}{\tilde{\mathbf{u}}_0}
\newcommand{\e}{\varepsilon}
\newcommand{\al}{\alpha}
\newcommand{\be}{\beta}
\newcommand{\de}{\delta}
\newcommand{\ka}{\kappa}
\newcommand{\la}{\lambda}
\newcommand{\si}{\sigma}
\newcommand{\vp}{\varphi}
\newcommand{\dist}{\operatorname{dist}}
\newcommand{\loc}{\operatorname{loc}}
\newcommand{\D}{\nabla}
\newcommand{\p}{\partial}
\newcommand{\mean}[1]{\langle{#1}\rangle}
\newtheorem{theorem}{Theorem}
\theoremstyle{plain}
\newtheorem{corollary}{Corollary}
\newtheorem{definition}{Definition}
\newtheorem{example}{Example}
\newtheorem{lemma}{Lemma}
\newtheorem{remark}{Remark}
\newtheorem{proposition}{Proposition}
\numberwithin{equation}{section}
\begin{document}

\author{Daniela De Silva}
\address[Daniela De Silva]
{Department of Mathematics \newline 
\indent  Barnard College, Columbia University \newline 
\indent New York, NY,
10027} 
\email[Daniela De Silva]{desilva@math.columbia.edu}

\author{Seongmin Jeon}
\address[Seongmin Jeon]
{Department of Mathematics \newline 
\indent KTH Royal Institute of Technology \newline 
\indent 100 44 Stockholm, Sweden} 
\email[Seongmin Jeon]{seongmin@kth.se}

\author{Henrik Shahgholian}
\address[Henrik Shahgholian]
{Department of Mathematics \newline 
\indent KTH Royal Institute of Technology \newline 
\indent 100 44 Stockholm, Sweden} 
\email[Henrik Shahgholian]{henriksh@kth.se}

\title[Almost minimizers for a sublinear system with free boundary]
{Almost minimizers for a sublinear system with free boundary}

\date{\today}
\keywords{Almost minimizers, Sublinear system, Regular set, Weiss-type monotonocity formula, Epiperimetric inequality} 
\subjclass[2020]{Primary 35R35; Secondary 35J60} 
\thanks{ DD is supported by a NSF grant (RTG 1937254).
 HS is supported by Swedish Research Council.}

\begin{abstract}
    We study vector-valued almost minimizers of the energy functional $$
    \int_D\left(|\D\uu|^2+\frac2{1+q}\left(\la_+(x)|\uu^+|^{q+1}+\la_-(x)|\uu^-|^{q+1}\right)\right)dx,\quad0<q<1.
    $$
    For H\"older continuous coefficients $\la_\pm(x)>0$, we take the epiperimetric inequality approach and prove the regularity for both almost minimizers and the set of ``regular" free boundary points.
\end{abstract}

\maketitle 

\tableofcontents

\section{Introduction}

\subsection{A sublinear elliptic system}\label{subsec:system}
Let $D$ be a domain in $\R^n$, $n\ge2$, and $\g:\p D\to\R^m$, $m\ge1$, be a given function (the boundary value). We also let $\la_\pm(x)$ be $\al$-H\"older continuous functions satisfying $0<\la_0\le\la_\pm(x)\le\la_1<\infty$ for some positive constants $\la_0$, $\la_1$. Then, for $0<q<1$ and $F(x,\vv)=\frac1{1+q}\left(\la_+(x)|\vv^+|^{q+1}+\la_-(x)|\vv^-|^{q+1}\right)
$, consider the minimizer $\uu$ of the energy functional \begin{align}
    \label{eq:energy-ftnal}
    \int_D\left(|\D\vv|^2+2F(x,\vv)\right)dx,
\end{align}
among all functions $\vv\in W^{1,2}(D;\R^m)$ with $\vv=\g$ on $\p D$. It is well-known that there exists a unique minimizer $\uu$ and it solves a sublinear system $$
\Delta\uu=\la_+(x)|\uu^+|^{q-1}\uu^+-\la_-(x)|\uu^-|^{q-1}\uu^-.
$$
The regularity of both the solution $\uu$ and its free boundary $\Gamma(\uu):=\p\{x\,:\,|\uu(x)|>0\}$ was studied in \cite{FotShaWei21} or in the scalar case (when $m=1$) in \cite{FotSha17}.


\subsection{Almost minimizers}
In this paper we consider almost minimizers of the functional \eqref{eq:energy-ftnal}.\\
\indent To introduce the definition of almost minimizers, we let $\omega:(0,r_0)\longmapsto[0,\infty)$, $r_0>0$, be a \emph{gauge} function, which is a nondecreasing function with $\omega(0+)=0$.
\begin{definition}
[Almost minimizers]\label{def:alm-min}
Let $0<r_0<1$ be a constant and $\omega(r)$ be a gauge function. We say that a function $\uu\in W^{1,2}(B_1;\R^m)$ is an almost minimizer for the functional $\int\left(|\D\uu|^2+2F(x,\uu)\right)\,dx$ in a domain $D$, with gauge function $\omega(r)$, if for any ball $B_r(x_0)\Subset D$ with $0<r<r_0$, we have \begin{align}
    \label{eq:alm-min}
    \int_{B_r(x_0)}\left(|\D\uu|^2+2F(x,\uu)\right)dx\le(1+\omega(r))\int_{B_r(x_0)}\left(|\D\vv|^2+2F(x,\vv)\right)dx,
\end{align}
for any competitor function $\vv\in \uu+W^{1,2}_0(B_r(x_0);\R^m)$.
\end{definition}
In fact, we can observe that for $x$, $x_0\in D$, \begin{align}
    \label{eq:lambda-ratio}
1-C|x-x_0|^\al\le\frac{\la_\pm(x)}{\la_\pm(x_0)}\le 1+C|x-x_0|^\al,
\end{align}
with a constant $C$ depending only on $\la_0$ and $\|\la_\pm\|_{C^{0,\al}(D)}$. Using this, we can rewrite \eqref{eq:alm-min} in the form with frozen coefficients \begin{align}
    \label{eq:alm-min-frozen}
\int_{B_r(x_0)}\left(|\D\uu|^2+2F(x_0,\uu)\right)dx\le (1+\widetilde{\omega}(r))\int_{B_r(x_0)}\left(|\D\vv|^2+2F(x_0,\vv)\right)dx,
\end{align}
where \begin{align*}
    F(x_0,\uu)&=\frac1{q+1}\left(\la_+(x_0)|\uu^+|^{q+1}+\la_-(x_0)|\uu^-|^{q+1}\right),\\
    \widetilde{\omega}(r)&=C(\omega(r)+r^\al).
\end{align*}
This implies that almost minimizers of \eqref{eq:energy-ftnal} with H\"older coefficients $\la_\pm$ are almost minimizers with frozen coefficients \eqref{eq:alm-min-frozen}.\\
\indent An example of an almost minimizer can be found in Appendix~\ref{sec:ex}. Almost minimizers for the case $q=0$ and $\la_\pm=1$ were studied by the authors in \cite{DeSJeoSha21}, where the regularity of both the almost minimizers and the regular part of the free boundary has been proved.\\
\indent In this paper we aim to extend the results in \cite{FotShaWei21} from solutions to almost minimizers and those in \cite{DeSJeoSha21} from the case $q=0$ to $0<q<1$.


\subsection{Main results}
For the sake of simplicity, we assume that the gauge function $\omega(r)=r^\al$ for $0<\al<2$, $D$ is the unit ball $B_1$, and the constant $r_0=1$ in Definition~\ref{def:alm-min}.\\
\indent In addition, to simplify tracking all constants, we take $M>2$ such that \begin{align}\label{eq:M}
\|\la_\pm\|_{C^{0,\al}(B_1)}\le M,\qquad \frac1{\la_0},\,\la_1\le M,\qquad \omega(r),\,\widetilde{\omega}(r)\le Mr^\al.
\end{align}
Now, we state our main results.

\begin{theorem}[Regularity of almost minimizers]\label{thm:grad-u-holder}
Let $\uu\in W^{1,2}(B_1;\R^m)$ be an almost minimizer in $B_1$. Then $\uu\in C^{1,\al/2}(B_1)$. Moreover, for any $K\Subset B_1$, \begin{align}\label{eq:grad-u-holder}
\|\uu\|_{C^{1,\al/2}(K;\R^m)}\le C(n,\al,M,K)(E(\uu,1)^{1/2}+1),
\end{align}
where $E(\uu,1)=\int_{B_1}\left(|\D \uu|^2+|\uu|^{q+1}\right)$.
\end{theorem}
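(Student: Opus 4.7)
The plan is a classical harmonic-replacement comparison followed by a Morrey--Campanato iteration, first to upgrade the crude energy bound to Lipschitz continuity, and then to $C^{1,\al/2}$ at the level of the gradient. The sublinearity $0<q<1$ is actually an advantage: once $\uu$ is bounded in $L^\infty$, the potential $F(x_0,\uu)$ is $(q+1)$-homogeneous and enters the comparison as a lower-order perturbation that can be absorbed. I would work throughout with the frozen-coefficient version \eqref{eq:alm-min-frozen} of the almost-minimality property.

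First I would establish an $L^\infty$-bound $\|\uu\|_{L^\infty(K)}\le C(E(\uu,1)^{1/2}+1)$ by testing \eqref{eq:alm-min-frozen} against truncations of $\uu$ at high levels (a standard De Giorgi-type argument, to which $F\ge0$ only contributes favorably). Next, for $x_0\in K$ and $B_r(x_0)\Subset B_1$, let $\h$ be the componentwise harmonic replacement of $\uu$ in $B_r(x_0)$. Plugging $\vv=\h$ into \eqref{eq:alm-min-frozen} and using the orthogonality $\int_{B_r}|\D\uu|^2=\int_{B_r}|\D\h|^2+\int_{B_r}|\D(\uu-\h)|^2$ yields
\[
\int_{B_r}|\D(\uu-\h)|^2\le\widetilde\omega(r)\int_{B_r}|\D\h|^2+2\int_{B_r}\bigl(F(x_0,\h)-F(x_0,\uu)\bigr)+2\widetilde\omega(r)\int_{B_r}F(x_0,\h).
\]
Since $|\p_\vv F(x_0,\cdot)|\le C|\vv|^q$, the middle integrand is bounded pointwise by $C\|\uu\|_\infty^q|\uu-\h|$; applying Poincar\'e to $\uu-\h$ (which vanishes on $\p B_r$) and Young's inequality to absorb then reduces the above to
\[
\int_{B_r(x_0)}|\D(\uu-\h)|^2\le Cr^{\al}\int_{B_r(x_0)}|\D\uu|^2+Cr^{n+\al},
\]
using that $\al<2$ to dominate the $r^{n+2}$ remainder.

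Combining with the classical harmonic decay $\int_{B_\rho}|\D\h|^2\le(\rho/r)^n\int_{B_r}|\D\h|^2$ gives
\[
\int_{B_\rho(x_0)}|\D\uu|^2\le\bigl[2(\rho/r)^n+Cr^\al\bigr]\int_{B_r(x_0)}|\D\uu|^2+Cr^{n+\al},
\]
and a standard iteration lemma upgrades this to the critical Morrey bound $\int_{B_\rho(x_0)}|\D\uu|^2\le C\rho^n$, i.e.\ Lipschitz continuity of $\uu$ on $K$. With Lipschitz regularity in hand, the upgrade to $C^{1,\al/2}$ is a Campanato argument at the gradient level: rerun the comparison on $B_r(x_0)$ but now estimate $\int_{B_\rho}|\D\uu-(\D\uu)_{B_\rho}|^2$ via the improved harmonic decay $\int_{B_\rho}|\D\h-(\D\h)_{B_\rho}|^2\le(\rho/r)^{n+2}\int_{B_r}|\D\h-(\D\h)_{B_r}|^2$. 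The pointwise Lipschitz bound turns the $F$-error terms into contributions of order $r^{n+\al}$, and the iteration lemma delivers $\int_{B_\rho(x_0)}|\D\uu-(\D\uu)_{B_\rho}|^2\le C\rho^{n+\al}$, which by Campanato's characterization is $\D\uu\in C^{0,\al/2}$ with the quantitative bound \eqref{eq:grad-u-holder}.

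The main obstacle I anticipate is reaching the \emph{critical} Morrey exponent $n$ (true Lipschitz regularity rather than $C^{0,\gamma}$ for every $\gamma<1$). Since $|\uu|^q$ is only H\"older-regular across $\{\uu=0\}$ when $0<q<1$, the nonlinear error terms have to be handled with some care; in practice the iteration likely has to be rerun on the mean-zero quantity $\uu-(\uu)_{B_r}$ with Campanato-style bookkeeping of how the $r^{n+\al}$ source term propagates through the dyadic scales. Once Lipschitz is established the sublinear term is a genuine lower-order perturbation and the final step to $C^{1,\al/2}$ is essentially routine.
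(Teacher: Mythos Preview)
Your overall architecture---harmonic replacement, a Morrey iteration for initial H\"older regularity, then Campanato at the gradient level---is exactly the paper's. The difference lies in how the intermediate step is handled, and the obstacle you flag at the end is real and is precisely where your outline diverges from the paper.

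The critical Morrey exponent $n$ (true Lipschitz) is \emph{not} reached in the paper either: the standard iteration lemma requires the target exponent to be strictly below the leading one, so from
\[
\int_{B_\rho}|\D\uu|^2\le\bigl[C(\rho/r)^n+Cr^\al\bigr]\int_{B_r}|\D\uu|^2+Cr^{n+\al}
\]
one only extracts $\int_{B_\rho}|\D\uu|^2\le C\rho^{n+2\sigma-2}$ for every $\sigma<1$, i.e.\ almost-Lipschitz regularity. The paper simply accepts this and runs the gradient Campanato step \emph{twice}. Choosing $\sigma=1-\al/4$ in the Morrey bound gives
\[
\int_{B_r(x_0)}|\D(\uu-\h)|^2\le Cr^{\al}\cdot r^{n-\al/2}+Cr^{n+2}\le Cr^{n+\al/2},
\]
so a first Campanato pass yields $\D\uu\in C^{0,\al/4}_{\loc}$, in particular $\|\D\uu\|_{L^\infty(K)}$ bounded. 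Feeding this back in upgrades the error to $Cr^{n+\al}$, and a second Campanato pass delivers $\D\uu\in C^{0,\al/2}$. Your suggested workaround of iterating on mean-zero quantities to push to the critical exponent is not needed; the two-stage bootstrap through $C^{1,\al/4}$ is the clean fix.

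A minor point: your preliminary $L^\infty$ bound via De Giorgi truncation is unnecessary. The paper's comparison estimate tracks $\int_{B_r}\bigl(|\D\uu|^2+F(x_0,\uu)\bigr)$ as a single quantity, handling the $F$-difference by Young's inequality on $|\uu^\pm|^{q+1}$ versus $|\h^\pm|^{q+1}$ directly rather than via a prior sup-bound; the resulting Morrey estimate then already controls $\|\uu\|_{L^\infty}$.
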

This regularity result is rather immediate in the case of minimizers (or solutions), since their $W^{2,p}$-regularity for any $p<\infty$ simply follows from the elliptic theory with a bootstrapping. This is inapplicable to almost minimizers, as they do not satisfy a partial differential equations. Instead, we follow the approach in \cite{DeSJeoSha21} by first deriving growth estimates for almost minimizers and then using Morrey and Campanato space embedding theorems.

To investigate the free boundary, for $\ka:=\frac2{1-q}>2$ we define a subset $\Gamma^\ka(\uu)$ of the free boundary $\Gamma(\uu)=\p\{|\uu|>0\}$ as 
\begin{align}\label{eq:def-FB}\Gamma^\ka(\uu):=\{x_0\in\Gamma(\uu)\,:\, \uu(x)=O(|x-x_0|^{\xi}) \ \hbox{for some } \lfloor\ka\rfloor<\xi<\ka\}.\end{align} 
Here, the big $O$ is not necessarily  uniform on $\uu$ and $x_0$, and $\lfloor s\rfloor$ is the greatest integer less than $s$, i.e., $s-1\le\lfloor s\rfloor< s$.

\begin{theorem}[Optimal growth estimate]\label{thm:opt-growth}
Let $\uu$ be as in Theorem~\ref{thm:grad-u-holder}. Then there are constants $C>0$ and $r_0>0$, depending only on $n$, $\al$, $M$, $\ka$, $E(\uu,1)$, such that \begin{align*}
    \int_{B_r(x_0)}\left(|\D\uu|^2+|\uu|^{1+q}\right)\le Cr^{n+2\ka-2}
\end{align*}
for $x_0\in \Gamma^\ka(\uu)\cap B_{1/2}$ and $0<r<r_0$.
\end{theorem}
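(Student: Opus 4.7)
The plan is to establish a Weiss-type almost monotonicity formula at homogeneity $\ka = 2/(1-q)$, and combine it with a compactness/blow-up argument exploiting the flatness hypothesis $\uu(x) = O(|x - x_0|^\xi)$ with $\xi \in (\lfloor \ka \rfloor, \ka)$. The strategy parallels \cite{DeSJeoSha21} for the case $q = 0$ and \cite{FotShaWei21} for solutions.

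First I would introduce the Weiss energy
\begin{equation*}
W_\ka(\uu, r, x_0) := \frac{1}{r^{n+2\ka-2}}\int_{B_r(x_0)}(|\D\uu|^2 + 2F(x_0, \uu))\,dx - \frac{\ka}{r^{n+2\ka-1}}\int_{\p B_r(x_0)}|\uu|^2 \, d\mathcal{H}^{n-1},
\end{equation*}
and establish its almost monotonicity by testing the frozen-coefficient almost minimality \eqref{eq:alm-min-frozen} against the $\ka$-homogeneous extension of $\uu|_{\p B_r(x_0)}$ as a competitor. A calculus-in-$r$ manipulation should yield that $r \mapsto W_\ka(\uu, r, x_0) + C_0 r^\si$ is nondecreasing on $(0, r_1)$ for some $\si = \si(n, \al, \ka) > 0$. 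Since $W_\ka(\uu, r_1, x_0)$ is directly controlled in terms of $E(\uu, 1)$ at the fixed scale $r_1$, the almost monotonicity then yields the uniform upper bound $W_\ka(\uu, r, x_0) \le C_1$ for all $r \in (0, r_1)$ and $x_0 \in B_{1/2}$.

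It then remains to convert the Weiss bound into the integral estimate by controlling the boundary term $r^{1-n-2\ka}\int_{\p B_r(x_0)}|\uu|^2$. I would argue by contradiction: if this fails along some sequence $r_k \to 0$, consider the rescalings $\tilde{\uu}_k(x) := \uu(x_0 + r_k x)/\rho_k$ with $\rho_k^2 := r_k^{1-n}\int_{\p B_{r_k}(x_0)}|\uu|^2$, so that $\|\tilde{\uu}_k\|_{L^2(\p B_1)} = 1$ and $\rho_k/r_k^\ka \to \infty$. The uniform Weiss bound together with the rescaled almost minimality --- where the sublinear coupling shrinks since $r_k^2\rho_k^{q-1} \to 0$ --- should yield a nontrivial weak $W^{1,2}(B_1)$-limit $\tilde{\uu}_0$ that is both harmonic and $\ka$-homogeneous in $B_1$. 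On the other hand, the pointwise growth $|\uu(y)| \le C|y - x_0|^\xi$ forces $\rho_k \le Cr_k^\xi$ and hence $|\tilde{\uu}_k(x)| \le C|x|^\xi(r_k^\xi/\rho_k)$; combining $\ka$-homogeneity of $\tilde{\uu}_0$ with $\xi < \ka$ then forces $\tilde{\uu}_0 \equiv 0$, yielding the contradiction.

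The hardest step will be this blow-up analysis: propagating the almost minimality through the rescalings to establish both harmonicity of $\tilde{\uu}_0$ (which relies on $r_k^2 \rho_k^{q-1} \to 0$, i.e., precisely on the failure condition $\rho_k \gg r_k^\ka$) and its $\ka$-homogeneity (which relies on the monotonicity defect from the almost monotonicity formula, quantifying the integrated deviation of $\D_\nu \uu$ from $\ka \uu/r$). Both require careful handling of the sublinear nonlinearity through the scaling.
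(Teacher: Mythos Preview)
Your outline has the right ingredients (Weiss almost monotonicity, compactness, blow-up), but the contradiction step has two genuine gaps.

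\textbf{Circularity in the $\ka$-homogeneity step.} To conclude that the blow-up limit $\tilde\uu_0$ is $\ka$-homogeneous from the Weiss monotonicity defect, you need $W_\ka(\uu, Rr_k, x_0) - W_\ka(\uu, rr_k, x_0) \to 0$, which requires $W_\ka(\uu,0+,x_0) > -\infty$. But monotonicity plus the upper bound $W_\ka(\uu,r,x_0)\le C_1$ only gives a limit in $[-\infty,C_1]$; finiteness of $W_\ka(0+)$ is essentially the growth estimate itself. The paper orders things accordingly: monotonicity (Section~\ref{sec:weiss}) is proved first, then the growth estimate (Section~\ref{sec:opt-growth}), and only afterwards is $W(0+)$ shown finite and blow-ups shown $\ka$-homogeneous (Lemma~\ref{lem:blowup-exist}).

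\textbf{The case $\ka\in\N$.} Even granting harmonicity and $\ka$-homogeneity, a nonzero $\ka$-homogeneous harmonic polynomial exists, so there is no contradiction. Your claim that ``$\xi<\ka$ plus $\ka$-homogeneity forces $\tilde\uu_0\equiv 0$'' does not follow: from $|\uu(y)|\le C|y-x_0|^\xi$ you get $|\tilde\uu_k(x)|\le C|x|^\xi\,(r_k^\xi/\rho_k)$, but $\rho_k\le Cr_k^\xi$ only bounds $r_k^\xi/\rho_k$ from \emph{below}, so no decay passes to the limit. The paper treats $\ka\in\N$ by a substantially more delicate iteration (Lemma~\ref{lem:v-p-opt-growth-integer}): the Weiss formula bounds the $\ka$-homogeneous part of an approximating harmonic polynomial, and a dichotomy argument rules out growth of the inhomogeneous part.

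For comparison, when $\ka\notin\N$ the paper avoids Weiss entirely in the growth proof. It normalizes differently---choosing $r_j$ as the first scale where $\sup_{B_r}(|\uu|r^{-\ka}+|\D\uu|r^{1-\ka})$ reaches $j$, which automatically controls the rescalings on \emph{all} larger balls $B_R$---and passes to a global harmonic limit with growth $\le R^\ka$, hence a polynomial of degree $\le\lfloor\ka\rfloor$. A preliminary polynomial-approximation lemma (Lemma~\ref{lem:alm-min-decay}) upgrades the nonuniform $O(|x|^\xi)$ hypothesis to a uniform pointwise bound, which forces all derivatives of the limit at $0$ up to order $\lfloor\ka\rfloor$ to vanish, yielding the contradiction via Liouville. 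This also handles the uniformity in $x_0$, which your sketch does not address: the $O(|x-x_0|^\xi)$ constant is explicitly not uniform in $x_0$, so a pointwise argument alone would give $x_0$-dependent constants.
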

The proof is inspired by the ones for minimizers in \cite{FotShaWei21} and for the case $q=0$ in \cite{DeSJeoSha21}. However, in our case concerning almost minimizers with $\ka>2$, several new technical difficulties arise and the proof is much more complicated, as we have to improve the previous techniques by using approximation by harmonic polynomials and limiting argument.

One implication of Theorem~\ref{thm:opt-growth} is the existence of $\ka$-homogeneous blowups (Theorem~\ref{lem:blowup-exist}). This allows to consider a subset of $\Gamma^\ka(\uu)$, the so-called ``regular" set, using a class of \emph{half-space} solutions
\begin{align*}
  \Hf_{x_0} &:= \{ x\mapsto\beta_{x_0}\max(x\cdot\nu,0)^\ka\mathbf{e} : 
\nu\in\R^n \textrm{ and }
\mathbf{e}\in\R^m \textrm{ are unit vectors}\},\quad x_0\in B_1.
\end{align*}

\begin{definition}[Regular free boundary points]
We say that a point $x_0\in \Gamma^\ka(\uu)$ is a \emph{regular free boundary point} if at least one homogeneous blowup of $\uu$ at $x_0$ belongs to $\Hf_{x_0}$. We denote by $\mathcal{R}_\uu$ the set of all regular free boundary points in $\Gamma(\uu)$ and call it the \emph{regular set}.
\end{definition}

The following is our central result concerning the regularity of the free boundary.

\begin{theorem}[Regularity of the regular set]\label{thm:reg-set}
$\mathcal{R}_\uu$ is a relatively open subset of the free boundary $\Gamma(\uu)$ and locally a $C^{1,\gamma}$-manifold for some $\gamma=\gamma(n,\al, q,\eta)>0$, where $\eta$ is the constant in Theorem~\ref{epi}.
\end{theorem}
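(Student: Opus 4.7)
The plan is to follow the classical epiperimetric-inequality route to $C^{1,\gamma}$-regularity of the regular set, adapted to the almost-minimality setting. The first step is to introduce the Weiss-type energy with frozen coefficients
\[W_\ka(\uu,r,x_0):=\frac{1}{r^{n+2\ka-2}}\int_{B_r(x_0)}\bigl(|\D\uu|^2+2F(x_0,\uu)\bigr)dx-\frac{\ka}{r^{n+2\ka-1}}\int_{\p B_r(x_0)}|\uu|^2\,d\cH^{n-1},\]
and to prove an almost-monotonicity statement: combining \eqref{eq:alm-min-frozen} with the optimal growth of Theorem~\ref{thm:opt-growth} should make $r\mapsto W_\ka(\uu,r,x_0)+Cr^\al$ nondecreasing on a scale $(0,r_0)$. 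Consequently $W_*:=W_\ka(\uu,0+,x_0)$ exists and equals the Weiss energy of every blowup of $\uu$ at $x_0$; at a regular point this common value coincides with $W_\ka(\h,1,0)$ for any $\h\in\Hf_{x_0}$.

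I would then plug the epiperimetric inequality (Theorem~\ref{epi}) into the almost-minimality. Let $\uu_{r,x_0}(x):=r^{-\ka}\uu(x_0+rx)$, and let $\vv_r$ denote the $\ka$-homogeneous extension of $\uu_{r,x_0}|_{\p B_1}$. Whenever $\vv_r$ is close enough to $\Hf_{x_0}$, the epiperimetric inequality produces a competitor $\cc_r$ on $B_1$ with the same boundary trace and
\[W_\ka(\cc_r,1,0)-W_*\le(1-\eta)\bigl(W_\ka(\vv_r,1,0)-W_*\bigr).\]
Inserting the rescaled $\cc_r$ into \eqref{eq:alm-min-frozen} and using the derivative identity for $W_\ka(\uu,\cdot,x_0)$ yields the differential inequality
\[\frac{d}{dr}\bigl(W_\ka(\uu,r,x_0)-W_*\bigr)\ge\frac{c\eta}{r}\bigl(W_\ka(\uu,r,x_0)-W_*\bigr)-Cr^{\al-1},\]
which integrates to the power decay $0\le W_\ka(\uu,r,x_0)-W_*\le Cr^\gamma$ with $\gamma=\gamma(n,\al,q,\eta)>0$.

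From this decay combined with the Weiss identity, a Cauchy-in-$r$ argument produces a unique blowup at each $x_0\in\mathcal{R}_\uu$ with rate $\|\uu_{r,x_0}-\h_{x_0}\|_{L^2(\p B_1)}\le Cr^{\gamma/2}$ for some $\h_{x_0}\in\Hf_{x_0}$, which upgrades to $C^1$-closeness on $\overline{B_{1/2}}$ by interpolation with Theorem~\ref{thm:grad-u-holder}. Since $\h_{x_0}$ is parametrized bi-Lipschitzly by $(\nu(x_0),\mathbf{e}(x_0))\in\sS^{n-1}\times\sS^{m-1}$, comparing the convergence rates at two nearby regular points transfers the radial decay into a Hölder modulus for $x_0\mapsto(\nu(x_0),\mathbf{e}(x_0))$, which yields the $C^{1,\gamma}$-graph representation of $\mathcal{R}_\uu$. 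Openness of $\mathcal{R}_\uu$ in $\Gamma(\uu)$ follows from continuity of $W_\ka(\uu,r,\cdot)$ in the base point: for $y_0\in\Gamma(\uu)$ sufficiently close to $x_0$, the rescaling $\uu_{r,y_0}|_{\p B_1}$ lies in the $L^2$-neighborhood of $\Hf_{y_0}|_{\p B_1}$ required by Theorem~\ref{epi}, so the whole argument applies at $y_0$. The main obstacle I foresee is absorbing the almost-minimizer error $\widetilde\omega(r)\le Mr^\al$ uniformly across the dyadic iteration without losing the geometric decay factor $(1-\eta)$, which forces $\gamma<\min(\al,c\eta)$; moreover the higher homogeneity $\ka>2$ (compared with the $q=0$ case of \cite{DeSJeoSha21}) requires a refinement of the derivative identity and of the construction of $\cc_r$, in the same spirit as the harmonic-polynomial approximation used to establish Theorem~\ref{thm:opt-growth}.
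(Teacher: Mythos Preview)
Your overall strategy coincides with the paper's: Weiss-type monotonicity plus the epiperimetric inequality yields energy decay, hence uniqueness of blowups with a rate, hence H\"older continuity of $(\nu,\mathbf e)$, hence a $C^{1,\gamma}$ graph. The energy-decay step and the Cauchy-in-$r$ (rotation) estimate match Lemmas~\ref{lem:Weiss-decay} and~\ref{lem:rot-est}, and your constraint $\gamma<\min(\al,c\eta)$ is exactly what emerges there.

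The genuine gap is your treatment of openness of $\mathcal R_\uu$ in $\Gamma(\uu)$. You argue that for $y_0\in\Gamma(\uu)$ close to $x_0$, continuity in the base point places $\uu_{r,y_0}|_{\p B_1}$ in the $\de$-neighborhood of $\Hf_{y_0}$ where Theorem~\ref{epi} applies. But this only gives closeness at \emph{one fixed scale} $r$; your differential inequality for $W_\ka(\uu,\cdot,y_0)-W_*$ requires the epiperimetric inequality for the $\ka$-homogeneous replacement at \emph{every} scale $s\in(0,r)$, and you have supplied no mechanism that propagates closeness to $\Hf_{y_0}$ downward in scale. Moreover, you implicitly assume $y_0\in\Gamma^\ka(\uu)$, which is needed even to have the optimal growth of Theorem~\ref{thm:opt-growth} (hence bounded rescalings and a finite $W_*$) at $y_0$; for an arbitrary $y_0\in\Gamma(\uu)$ this is part of what must be proved. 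The paper handles openness by a separate and substantially more involved contradiction argument (Lemma~\ref{lem:rel-open}): it uses the isolation of $\Hf_{x_0}$ among $\ka$-homogeneous solutions (Lemma~\ref{lem:H-isolated}) together with the classification result Proposition~\ref{prop:classi-hom-sol} to rule out any intermediate behavior between ``close to a half-space'' and ``bounded away from half-spaces'', by extracting a limiting $\ka$-homogeneous solution with zero set of nonempty interior and forcing it into $\Hf_{x_0}$. Only after openness is established does the paper prove (Lemma~\ref{lem:c-H-unif}) the uniform-in-$x_0$, uniform-in-scale closeness that lets one run the decay argument with uniform constants on compact subsets of $\mathcal R_\uu$.

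A secondary point: to pass from the H\"older continuity of $(\nu,\mathbf e)$ to an actual $C^{1,\gamma}$ graph you also need the two-sided cone conditions near each $x_1\in\mathcal R_\uu$, namely $\uu=0$ in $\{(y-x_1)\cdot\nu(x_1)<-\e|y-x_1|\}$ and $|\uu|>0$ in the opposite cone. The first of these relies on the non-degeneracy of Theorem~\ref{thm:nondeg} (via Lemma~\ref{lem:nondeg-int}); your outline does not invoke it.
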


The proof is based on the use of the epiperimetric inequality from \cite{FotShaWei21} and follows the general approach in \cite{JeoPet21} and \cite{DeSJeoSha21}: The combination of the monotonicity of Weiss-type energy functional (Theorem~\ref{thm:Weiss}) and the epiperimetric inequality, together with Theorem~\ref{thm:opt-growth}, establishes the geometric decay rate for the Weiss functional. This, in turn, provides us with the rate of convergence of proper rescalings to a blowup, ultimately implying the regularity of $\mathcal R_\uu$.


\subsection{Plan of the paper}
The plan of the paper is as follows.

In Section~\ref{sec:reg-alm-min} we study the regularity properties of almost minimizers. We prove their almost Lipschitz regularity (Theorem~\ref{thm:alm-Lip-reg}) and exploit it to infer the $C^{1,\al/2}$-regularity (Theorem~\ref{thm:grad-u-holder}).

In Section~\ref{sec:weiss} we establish the Weiss-type monotonicity formula (Theorem~\ref{thm:Weiss}), which will play a significant role in the analysis of the free boundary.

Section~\ref{sec:opt-growth} is dedicated to providing the proof of the optimal growth estimates in Theorem~\ref{thm:opt-growth} above.

Section~\ref{sec:nondeg} is devoted to proving the non-degeneracy result of almost minimizers, following the line of \cite{DeSJeoSha21}.

In Section~\ref{sec:blowup} we discuss the homogeneous blowup of almost minimizers at free boundary points, including its existence and properties. In addition, we estimate a decay rate of the Weiss energy, with the help of the epiperimetric inequality.

In Section~\ref{sec:reg-set} we make use of the previous technical tools to establish the $C^{1,\gamma}$-regularity of the regular set (Theorem~\ref{thm:reg-set}).

Finally, in Appendix~\ref{sec:ex} we provide an example of almost minimizers.


\subsection{Notation}\label{subsec:notation}
We introduce here some notations that are used frequently in this paper.\\

\noindent 
$B_r(x_0)$ means the open $n$-dimensional ball of radius $r$, centered at $x_0$, with boundary $\p B_r(x_0)$.\\
$B_r:=B_r(0)$, $\p B_r:=\p B_r(0)$.\\
For a given set, $\nu$ denotes the unit outward normal to the boundary.\\
$\p_\theta\uu:=\D\uu-(\D\uu\cdot\nu)\nu$ is the surface derivative of $\uu$.\\
 For $\uu=(u_1,\cdots,u_m)$, $m\ge1$, we denote $$
\uu^+=(u_1^+,\cdots,u_m^+),\quad \uu^-=(u_1^-,\cdots,u_m^-),\quad\text{where }u_i^\pm=\max\{0,\pm u_i\}.$$
For a domain $D$, we indicate the integral mean value of $\uu$ by $$
\mean{\uu}_D:=\fint_D\uu=\frac1{|D|}\int_D\uu.
$$
In particular, when $D=B_r(x_0)$, we simply write
 $$
\mean{\uu}_{x_0,r}:=\mean{\uu}_{B_r(x_0)}.
$$
$\Gamma(\uu):=\p\{|\uu|>0\}$ is the free boundary of $\uu$.\\
$\Gamma^\ka(\uu):=\{x_0\in\Gamma(\uu)\,:\, \uu(x)=O(|x-x_0|^{\xi}) \ \hbox{for some } \lfloor\ka\rfloor<\xi<\ka\}.$\\
$\lfloor s\rfloor$ is the greatest integer below $s\in\R$, i.e., $s-1\le\lfloor s\rfloor< s$.\\
For $\uu\in W^{1,2}(B_r;\R^m)$ and $0<q<1$, we set $$
E(\uu,r):=\int_{B_r}\left(|\D\uu|^2+|\uu|^{q+1}\right).
$$
For $\al$--H\"older continuous functions $\la_\pm:D\to\R^n$ satisfying $\la_0\le\la_\pm(x)\le\la_1$ (as in Subsection~\ref{subsec:system}), we denote \begin{align*}
f(x_0,\uu)&:=\la_+(x_0)|\uu^+|^{q-1}\uu^+-\la_-(x_0)|\uu^-|^{q-1}\uu^-,\\
    F(x,\uu)&:=\frac1{1+q}\left(\la_+|\uu^+|^{q+1}+\la_-|\uu^-|^{q+1}\right),\\
    F(x_0,\uu)&:=\frac1{1+q}\left(\la_+(x_0)|\uu^+|^{q+1}+\la_-(x_0)|\uu^-|^{q+1}\right),\\
    F(x_0,\uu,\h)&:=\frac1{1+q}\left(\la_+(x_0)\left||\uu^+|^{q+1}-|\h^+|^{q+1}\right|+\la_-(x_0)\left||\uu^-|^{q+1}-|\h^-|^{q+1}\right|\right).
\end{align*}
We fix constants (for $x_0\in B_{1}$) $$
\ka:=\frac2{1-q},\qquad \be_{x_0}=\la_+(x_0)^{\ka/2}(\ka(\ka-1))^{-\ka/2}.
$$
Throughout this paper, a universal constant may depend only on $n$, $\al$, $M$, $\ka$ and $E(\uu,1)$.\\
Below we consider only norms of vectorial functions to $\R^m$, but not those of scalar functions. Thus, for notational simplicity we drop $\R^m$ for spaces of vectorial functions, e.g., $C^1(\R^n)=C^1(\R^n;\R^m)$, $W^{1,2}(B_1)=W^{1,2}(B_1;\R^m)$.


\section{Regularity of almost minimizers}\label{sec:reg-alm-min}
The main result of this section is the $C^{1,\al/2}$ estimates of almost minimizers (Theorem~\ref{thm:grad-u-holder}). The proof is based on the Morrey and Campanato space embeddings, similar to the case of almost minimizers with $q=0$ and $\la_\pm=1$, treated by the authors in \cite{DeSJeoSha21}. We first prove the following concentric ball estimates.

\begin{proposition}\label{prop:alm-min-alm-sub-mean-prop}
Let $\uu$ be an almost minimizer in $B_1$. Then, there are $r_0=r_0(\al,M)\in (0,1)$ and $C_0=C_0(n,M)>1$ such that \begin{align}\label{eq:alm-min-alm-sub-mean-prop}\begin{multlined}
    \int_{B_\rho(x_0)}\left(|\D\uu|^2+F(x_0,\uu)\right)\le C_0\left[\left(\frac\rho r\right)^n+r^\al\right]\int_{B_r(x_0)}\left(|\D\uu|^2+F(x_0,\uu)\right)\\
    +C_0r^{n+\frac2{1-q}(q+1-\al q)}
\end{multlined}\end{align}
for any $B_{r_0}(x_0)\Subset B_1$ and $0<\rho<r<r_0.$
\end{proposition}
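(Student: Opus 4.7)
The plan is to compare $\uu$ on $B_r(x_0)$ with its harmonic replacement and transfer the standard concentric decay estimates for harmonic functions onto $\uu$, quantifying the defect through the frozen almost-minimality \eqref{eq:alm-min-frozen}. This follows the general blueprint of \cite{DeSJeoSha21}; the new care needed for $0<q<1$ is the handling of the sublinear $L^{q+1}$-term in $F$ via a carefully balanced Poincar\'e--H\"older--Young inequality.

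First, let $\h\in\uu+W^{1,2}_0(B_r(x_0))$ be the harmonic replacement: $\Delta\h=0$ in $B_r(x_0)$ with $\h=\uu$ on $\p B_r(x_0)$. Testing \eqref{eq:alm-min-frozen} against $\vv=\h$ and using the orthogonality $\int\D(\uu-\h)\cdot\D\h=0$ produces the defect identity
\begin{align*}
\int_{B_r(x_0)}|\D(\uu-\h)|^2+2\int_{B_r(x_0)}F(x_0,\uu)\le \widetilde{\omega}(r)\int_{B_r(x_0)}|\D\h|^2+2(1+\widetilde{\omega}(r))\int_{B_r(x_0)}F(x_0,\h).
\end{align*}
The crucial step is to estimate $\int F(x_0,\h)$ against $\int F(x_0,\uu)$ plus a small correction. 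From the mean-value inequality $\big||a|^{q+1}-|b|^{q+1}\big|\le C_q(|a|^q+|b|^q)|a-b|$, the subadditivity $|\h|^q\le|\uu|^q+|\h-\uu|^q$ (valid since $0<q<1$), and Young's inequality $|\uu|^q|\h-\uu|\le r^\al|\uu|^{q+1}+Cr^{-\al q}|\h-\uu|^{q+1}$, I obtain
\begin{align*}
\int_{B_r(x_0)}|F(x_0,\h)-F(x_0,\uu)|\le Cr^\al\int_{B_r(x_0)}F(x_0,\uu)+Cr^{-\al q}\int_{B_r(x_0)}|\h-\uu|^{q+1},
\end{align*}
while a crude bound $\int F(x_0,\h)\le C\int F(x_0,\uu)+C\int|\h-\uu|^{q+1}$ handles the remaining $\widetilde{\omega}(r)\int F(x_0,\h)$ (since it is already multiplied by $\widetilde{\omega}(r)\le Mr^\al$).

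Next, to close the estimate on $X:=\int_{B_r(x_0)}|\D(\uu-\h)|^2$, I would use H\"older with conjugate exponents $2/(q+1)$ and $2/(1-q)=\ka$ together with Poincar\'e (since $\uu-\h\in W^{1,2}_0(B_r(x_0))$):
\begin{align*}
\int_{B_r(x_0)}|\h-\uu|^{q+1}\le Cr^{n(1-q)/2+q+1}X^{(q+1)/2}.
\end{align*}
Young's inequality applied to $r^{-\al q}\cdot r^{n(1-q)/2+q+1}X^{(q+1)/2}$ with the same pair of conjugate exponents yields $\tfrac12 X+Cr^{\ka(n(1-q)/2+q+1-\al q)}=\tfrac12 X+Cr^{n+\ka(q+1-\al q)}$; the target exponent arises exactly from the Young balance $\sigma=r^\al$. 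Absorbing $\tfrac12 X$ and using the Dirichlet minimality $\int|\D\h|^2\le\int|\D\uu|^2$ produces the key ``excess'' estimate
\begin{align*}
\int_{B_r(x_0)}|\D(\uu-\h)|^2+\int_{B_r(x_0)}|\h-\uu|^{q+1}\le Cr^\al\int_{B_r(x_0)}\big(|\D\uu|^2+F(x_0,\uu)\big)+Cr^{n+\ka(q+1-\al q)}.
\end{align*}

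Finally, for harmonic $\h$ one has the classical monotonicity $\int_{B_\rho(x_0)}|\D\h|^2\le(\rho/r)^n\int_{B_r(x_0)}|\D\h|^2$ (from subharmonicity of $|\D\h|^2$); moreover $|\h|=\sup_{|\mathbf{e}|=1}\h\cdot\mathbf{e}$ is subharmonic, and $t\mapsto t^{q+1}$ is convex non-decreasing on $[0,\infty)$ for $q+1\ge1$, so $|\h|^{q+1}$ is subharmonic and likewise $\int_{B_\rho(x_0)}|\h|^{q+1}\le(\rho/r)^n\int_{B_r(x_0)}|\h|^{q+1}$. Splitting $|\D\uu|^2\le 2|\D\h|^2+2|\D(\uu-\h)|^2$ and $|\uu|^{q+1}\le C(|\h|^{q+1}+|\h-\uu|^{q+1})$ on $B_\rho(x_0)$, summing, and inserting the excess bound of the previous paragraph on $B_r(x_0)$ yields \eqref{eq:alm-min-alm-sub-mean-prop}. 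The main obstacle is the second step above: the Young balance with $\sigma=r^\al$ must produce exactly the exponent $n+\ka(q+1-\al q)$, and the large constant $r^{-\al q}$ has to be absorbed against the smallness of $r^{n(1-q)/2+q+1}$ coming from Poincar\'e--H\"older applied to the subquadratic power $|\h-\uu|^{q+1}$.
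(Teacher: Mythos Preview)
Your proposal is correct and follows essentially the same approach as the paper: compare with the harmonic replacement, use subharmonicity of $|\nabla\h|^2$ and of a convex power of $|\h|$ for the concentric decay, and control the defect via Poincar\'e plus a two-stage Young balance that produces exactly the exponent $n+\ka(q+1-\alpha q)$. The only cosmetic differences are that the paper works with $|\h^\pm|^{q+1}$ and the symmetrized quantity $F(x_0,\uu,\h)$, and performs both Young steps pointwise (first splitting $(|\uu^\pm|^q+|\h^\pm|^q)|\uu^\pm-\h^\pm|$, then $r^{-\alpha q}|\uu-\h|^{q+1}$ into $r^{-2}|\uu-\h|^2$ plus the residual power of $r$), whereas you route through H\"older on the integral $\int|\uu-\h|^{q+1}\le Cr^{n(1-q)/2}\bigl(\int|\uu-\h|^2\bigr)^{(q+1)/2}$ before applying Poincar\'e and Young; both paths land on the same excess estimate.
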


\begin{proof}
Without loss of generality, we may assume $x_0=0$. Let $\h$ be a harmonic replacement of $\uu$ in $B_r$, i.e., $\h$ is the vectorial harmonic function with $\h=\uu$ on $\p B_r$. Since $|\h^\pm|^{q+1}$ and $|\D\h|^2$ are subharmonic in $B_r$, we have the following sub-mean value properties: \begin{align}
    \label{eq:sub-mean-value}
    \int_{B_\rho}F(0,\h)\le \left(\frac\rho r\right)^n\int_{B_r}F(0,\h),\quad \int_{B_\rho}|\D \h|^2\le \left(\frac\rho r\right)^n\int_{B_r}|\D\h|^2,\quad 0<\rho<r.
\end{align}
Moreover, notice that since $\h$ is harmonic, $\int_{B_r}\D\h\cdot\D(\uu-\h)=0.$ Combining this with the almost minimizing property of $\uu$, we obtain that for $0<r<r_0(\al,M)$, \begin{align}\label{eq:grad-(u-h)-est}\begin{split}
&\int_{B_r}|\D(\uu-\h)|^2=\int_{B_r}\left(|\D\uu|^2-|\D\h|^2\right)\\
&\le \int_{B_r}\left[Mr^\al|\D\h|^2+2(1+Mr^\al)F(0,\h)-2F(0,\uu)\right]
\\
&=\int_{B_r}\left[Mr^\al|\D\h|^2+2(1+2Mr^\al)(F(0,\h)-F(0,\uu))+2Mr^\al(2F(0,\uu)-F(0,\h))\right]\\
&\le \int_{B_r}\left[Mr^\al|\D\uu|^2+3F(0,\uu,\h)+2Mr^\al(2F(0,\uu)-F(0,\h))\right],
\end{split}\end{align}
where in the last line we have used that $F(0,\h)-F(0,\uu)\le F(0,\uu,\h)$.

We also note that by  Poincar\'e inequality there is $C_1=C_1(n)>0$ such that $$
r^{-2}\int_{B_r}|\uu-\h|^2\le C_1\int_{B_r}|\D(\uu-\h)|^2.
$$
Then, for $\e_1=\frac1{16C_1M}$, \begin{align*}
    &\int_{B_r}\frac1{1+q}\left||\uu^+|^{q+1}-|\h^+|^{q+1}\right|\\
    &\le \int_{B_r}\left(|\uu^+|^q+|\h^+|^q\right)|\uu^+-\h^+|\\
    &\le \int_{B_r}1/4\left(r^{\frac{\al q}{q+1}}|\uu^+|^q\right)^{\frac{q+1}q}+1/4\left(r^{\frac{\al q}{q+1}}|\h^+|^q\right)^{\frac{q+1}q}+C\left(r^{-\frac{\al q}{q+1}}|\uu^+-\h^+|\right)^{q+1}\\
    &=\int_{B_r}r^\al/4(|\uu^+|^{q+1}+|\h^+|^{q+1})+Cr^{-\al q}|\uu-\h|^{q+1}\\
    &\le \int_{B_r}\left[r^\al/4(|\uu^+|^{q+1}+|\h^+|^{q+1})+\e_1\left(r^{-(q+1)}|\uu-\h|^{q+1}\right)^{\frac2{q+1}}+Cr^{(q+1-\al q)\frac2{1-q}}\right]\\
    &= \int_{B_r}\left[r^\al/4(|\uu^+|^{q+1}+|\h^+|^{q+1})+\e_1 r^{-2}|\uu-\h|^2\right]+Cr^{n+\frac2{1-q}(q+1-\al q)}\\
    &\le \int_{B_r}\left[r^\al/4(|\uu^+|^{q+1}+|\h^+|^{q+1})+\e_1 C_1|\D(\uu-\h)|^2\right]+Cr^{n+\frac2{1-q}(q+1-\al q)},
\end{align*}
where in the second inequality we applied Young's inequality.

Similarly, we can get \begin{align*}
    &\int_{B_r}\frac1{1+q}||\uu^-|^{q+1}-|\h^-|^{q+1}|\\
    &\qquad\le \int_{B_r}\left[r^\al/4(|\uu^-|^{q+1}+|\h^-|^{q+1})+\e_1 C_1|\D(\uu-\h)|^2\right]+Cr^{n+\frac2{1-q}(q+1-\al q)},
\end{align*} 
and it follows that \begin{align}
    \label{eq:(u-h)-est}\begin{split}
            &\int_{B_r}F(0,\uu,\h)\\
            &=\int_{B_r}\left(\frac{\la_+(0)}{1+q}\left||\uu^+|^{q+1}-|\h^+|^{q+1}\right|+\frac{\la_-(0)}{1+q}\left||\uu^-|^{q+1}-|\h^-|^{q+1}\right|\right)\\
            &\begin{multlined}\le\int_{B_r}\big[\la_+(0)r^\al/4\left(|\uu^+|^{q+1}+|\h^+|^{q+1}\right)+\la_-(0)r^\al/4\left(|\uu^-|^{q+1}+|\h^-|^{q+1}\right)\\
            +2\e_1 C_1M|\D(\uu-\h)|^2\big]+Cr^{n+\frac2{1-q}(q+1-\al q)}\end{multlined}\\
            &=\int_{B_r}\left[(1+q)r^\al/4(F(0,\uu)+F(0,\h))+2\e_1 C_1M|\D(\uu-\h)|^2\right]+Cr^{n+\frac2{1-q}(q+1-\al q)}.
    \end{split}
\end{align}
From \eqref{eq:grad-(u-h)-est} and \eqref{eq:(u-h)-est}, \begin{align*}
    &\int_{B_r}\left(|\D(\uu-\h)|^2+4F(0,\uu,\h)\right)\\
    &\begin{multlined}\le \int_{B_r}\big[Mr^\al|\D\uu|^2+3F(0,\uu,\h)+Cr^\al F(0,\uu)+\left(1+q-2M\right)r^\al F(0,\h)\\
    +8\e_1 C_1M|\D(\uu-\h)|^2\big]
    +Cr^{n+\frac2{1-q}(q+1-\al q)}
    \end{multlined}\\
    &\begin{multlined}\le \int_{B_r}\left[Cr^\al\left(|\D\uu|^2+F(0,\uu)\right)+3F(0,\uu,\h)+\frac12|\D(\uu-\h)|^2\right]\\
    +Cr^{n+\frac2{1-q}(q+1-\al q)},\end{multlined}
\end{align*}
which gives \begin{align*}\begin{split}\begin{multlined}
    \int_{B_r}\left(\frac12|\D(\uu-\h)|^2+F(0,\uu,\h)\right)
    \le Cr^\al\int_{B_r}\left(|\D\uu|^2+F(0,\uu)\right)\\+Cr^{n+\frac2{1-q}(q+1-\al q)}.\end{multlined}\end{split}
\end{align*}
Thus \begin{align}\label{eq:(u-h)-diff-est}\begin{split} &\int_{B_r}\left(|\D(\uu-\h)|^2+F(0,\uu,\h)\right)\\
    &\qquad\le 2\int_{B_r}\left(\frac12|\D(\uu-\h)|^2+F(0,\uu,\h)\right)\\
    &\qquad\le Cr^\al\int_{B_r}\left(|\D\uu|^2+F(0,\uu)\right)+Cr^{n+\frac2{1-q}(q+1-\al q)}.\end{split}
\end{align}
Now, by combining \eqref{eq:sub-mean-value} and \eqref{eq:(u-h)-diff-est}, we obtain that for $0<\rho<r<r_0$,\begin{align*}
    &\int_{B_\rho}\left(|\D\uu|^2+F(0,\uu)\right)\\
    &\le 2\int_{B_\rho}\left(|\D\h|^2+F(0,\h)\right)+2\int_{B_\rho}\left(|\D(\uu-\h)|^2+F(0,\uu,\h)\right)\\
    &\le 2\left(\frac\rho r\right)^n\int_{B_r}\left(|\D\h|^2+F(0,\h)\right)+2\int_{B_\rho}\left(|\D(\uu-\h)|^2+F(0,\uu,\h)\right)\\
    &\le 4\left(\frac\rho r\right)^n\int_{B_r}\left(|\D\uu|^2+F(0,\uu)\right)+6\int_{B_r}\left(|\D(\uu-\h)|^2+F(0,\uu,\h)\right)\\
    &\le C\left[\left(\frac\rho r\right)^n+r^\al\right]\int_{B_r}\left(|\D\uu|^2+F(0,\uu)\right)+Cr^{n+\frac2{1-q}(q+1-\al q)}.\qedhere
\end{align*}

\end{proof}

From here, we deduce the almost Lipschitz regularity of almost minimizers with the help of the following lemma, whose proof can be found in \cite{HanLin97}.

\begin{lemma}\label{lem:HL}
  Let $r_0>0$ be a positive number and let
  $\vp:(0,r_0)\to (0, \infty)$ be a nondecreasing function. Let $a$,
  $\beta$, and $\gamma$ be such that $a>0$, $\gamma >\beta >0$. There
  exist two positive numbers $\e=\e(a,\gamma,\beta)$,
  $c=c(a,\gamma,\beta)$ such that, if
$$
\vp(\rho)\le
a\Bigl[\Bigl(\frac{\rho}{r}\Bigr)^{\gamma}+\e\Bigr]\vp(r)+b\, r^{\be}
$$ for all $\rho$, $r$ with $0<\rho\leq r<r_0$, where $b\ge 0$,
then one also has, still for $0<\rho<r<r_0$,
$$
\vp(\rho)\le
c\Bigl[\Bigl(\frac{\rho}{r}\Bigr)^{\be}\vp(r)+b\rho^{\be}\Bigr].
$$
\end{lemma}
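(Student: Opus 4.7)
The plan is the standard Campanato-type iteration argument: first derive a contractive estimate along a geometric sequence of radii $r,\tau r,\tau^2 r,\dots$, and then use the monotonicity of $\varphi$ to interpolate to arbitrary $\rho<r$. The only care needed is in the order in which the various parameters are selected.

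First, I would fix an auxiliary exponent $\beta':=(\beta+\gamma)/2\in(\beta,\gamma)$. Then I would choose a ratio $\tau=\tau(a,\beta,\gamma)\in(0,1)$ small enough that $2a\tau^{\gamma}\le\tau^{\beta'}$, which is possible since $\gamma>\beta'$, and set $\varepsilon:=\tau^{\beta'}/(2a)$. Applying the hypothesis at $\rho=\tau r$ with these choices produces the contractive inequality
$$\varphi(\tau r)\le a\bigl(\tau^{\gamma}+\varepsilon\bigr)\varphi(r)+b r^{\beta}\le \tau^{\beta'}\varphi(r)+b r^{\beta},\qquad 0<r<r_0.$$

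Second, I would iterate this one-step bound $k$ times, replacing $r$ by $\tau^{j}r$ for $j=0,\dots,k-1$. A straightforward induction yields
$$\varphi(\tau^{k}r)\le \tau^{k\beta'}\varphi(r)+b r^{\beta}\sum_{j=0}^{k-1}\tau^{j\beta'+(k-1-j)\beta}\le \tau^{k\beta'}\varphi(r)+C_1\,b r^{\beta}\tau^{(k-1)\beta},$$
where $C_1:=(1-\tau^{\beta'-\beta})^{-1}$ bounds the convergent geometric series; here the strict inequality $\beta'>\beta$ is what supplies the summability.

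Finally, for an arbitrary $\rho\in(0,r)$ I would pick the unique integer $k\ge 0$ with $\tau^{k+1}r<\rho\le\tau^{k}r$. Monotonicity of $\varphi$ gives $\varphi(\rho)\le\varphi(\tau^{k}r)$. From $\tau^{k+1}<\rho/r$ I would deduce the elementary bounds $\tau^{k\beta'}\le\tau^{-\beta'}(\rho/r)^{\beta'}$ and $r^{\beta}\tau^{(k-1)\beta}\le\tau^{-2\beta}\rho^{\beta}$; combining these with the inequality $(\rho/r)^{\beta'}\le(\rho/r)^{\beta}$ (valid since $\beta'>\beta$ and $\rho/r\le 1$) delivers the conclusion with $c:=\max(\tau^{-\beta'},\,C_1\tau^{-2\beta})$, which depends only on $a$, $\beta$, $\gamma$. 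There is no substantive obstacle: the argument is entirely elementary, and the essential point is to choose $\beta'$, then $\tau$, then $\varepsilon$ in that order so that the one-step contraction factor $\tau^{\beta'}$ lies strictly between $\tau^{\gamma}$ and $\tau^{\beta}$, creating the room needed to absorb the $\varepsilon$-term and to sum the forcing contributions.
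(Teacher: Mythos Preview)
Your argument is correct and is precisely the standard Campanato iteration that one finds in Han--Lin. Note that the paper does not supply its own proof of this lemma at all; it simply cites \cite{HanLin97}, so there is nothing in the paper to compare against beyond that reference, and your proof is essentially the one given there.
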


\begin{theorem}\label{thm:alm-Lip-reg}
Let $\uu$ be an almost minimizer in $B_1$. Then $\uu\in C^{0,\si}(B_1)$ for all $0<\si<1$. Moreover, for any $K\Subset B_1$, \begin{align}\label{eq:alm-Lip-reg}
\|\uu\|_{C^{0,\si}(K)}\le C\left(E(\uu,1)^{1/2}+1\right)
\end{align}
with $C=C(n,\al,M,\si,K)$.
\end{theorem}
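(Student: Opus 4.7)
The plan is to feed the concentric-ball estimate of Proposition~\ref{prop:alm-min-alm-sub-mean-prop} into the Han--Lin iteration of Lemma~\ref{lem:HL}, upgrade it to a Morrey growth bound on $\int|\D\uu|^2$, and then invoke the Campanato characterization of H\"older continuity. This parallels the scheme used in \cite{DeSJeoSha21} for the case $q=0$.

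For $x_0\in K\Subset B_1$ and small $r$, I set $\varphi(r):=\int_{B_r(x_0)}\big(|\D\uu|^2+F(x_0,\uu)\big)$. Proposition~\ref{prop:alm-min-alm-sub-mean-prop} puts $\varphi$ into the exact form required by Lemma~\ref{lem:HL} with $\gamma=n$ and remainder exponent $\gamma_1:=n+\frac{2}{1-q}(q+1-\al q)>n$. Given $\si\in(0,1)$, I aim for $\be:=n-2+2\si$, which lies strictly below $\gamma=n$ precisely because $\si<1$; this is the structural reason the method gives every H\"older exponent below one but not Lipschitz itself.

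Next, I will choose $r_1=r_1(n,\al,M,\si,\dist(K,\p B_1))\in(0,1)$ small enough that simultaneously $(i)$ $B_{r_1}(x_0)\subset B_1$ for every $x_0\in K$, $(ii)$ $C_0r_1^\al$ lies below the threshold $\e(C_0,n,\be)$ of Lemma~\ref{lem:HL}, and $(iii)$ the remainder absorbs as $C_0r^{\gamma_1}\le C_0r_1^{\gamma_1-\be}\,r^\be$ for $r\le r_1$ (using $\gamma_1>n>\be$). Lemma~\ref{lem:HL} applied to $\varphi$, combined with the crude bound $\varphi(r_1)\le C(M)E(\uu,1)$, will then yield
\[
\int_{B_\rho(x_0)}|\D\uu|^2\le C\,\rho^{n-2+2\si}\big(E(\uu,1)+1\big),\qquad 0<\rho<r_1,\ x_0\in K,
\]
with $C=C(n,\al,M,\si,K)$.

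Finally, Poincar\'e's inequality upgrades this to the Campanato growth $\int_{B_\rho(x_0)}|\uu-\mean{\uu}_{x_0,\rho}|^2\le C\rho^{n+2\si}(E(\uu,1)+1)$, and the standard Campanato embedding into $C^{0,\si}$ produces $[\uu]_{C^{0,\si}(K)}\le C(E(\uu,1)^{1/2}+1)$; an $L^\infty$ bound deduced from this seminorm together with $\|\uu\|_{L^2(B_1)}\le E(\uu,1)^{1/2}$ then completes \eqref{eq:alm-Lip-reg}. The argument is largely routine once Proposition~\ref{prop:alm-min-alm-sub-mean-prop} is in hand; the only delicate point I expect is the bookkeeping in step three, where $r_1$ must meet the three smallness requirements at once while keeping its dependence universal in $n$, $\al$, $M$, $\si$ and $\dist(K,\p B_1)$ so that the final estimate has the stated form.
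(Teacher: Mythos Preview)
Your proposal is correct and follows essentially the same route as the paper: feed Proposition~\ref{prop:alm-min-alm-sub-mean-prop} into Lemma~\ref{lem:HL} with $\gamma=n$ and $\beta=n-2+2\si$, choose a small radius so that $r^\al$ beats the threshold $\e$ and the remainder exponent (which is $\ge n+2$) dominates $\beta$, and read off the $C^{0,\si}$ bound from the resulting growth of $\int_{B_\rho}|\D\uu|^2$. The only cosmetic difference is that the paper invokes ``Morrey space embedding'' directly, whereas you spell this out as Poincar\'e plus Campanato; these are the same argument.
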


\begin{proof}
For given $K\Subset B_1$ and $x_0\in K$, take $\de=\de(n,\al,M,\si,K)>0$ such that $\de<\min\{r_0,\dist(K,\p B_1)\}$ and $\de^\al\le \e(C_0,n,n+2\si-2)$, where $r_0=r_0(\al,M)$ and $C_0=C_0(n,M)$ are as in Proposition~\ref{prop:alm-min-alm-sub-mean-prop} and $\e=\e(C_0,n,n+2\si-2)$ is as in Lemma~\ref{lem:HL}. Then, by \eqref{eq:alm-min-alm-sub-mean-prop}, for $0<\rho<r<\de$, \begin{align*}
    \int_{B_\rho(x_0)}\left(|\D\uu|^2+F(x_0,\uu)\right)\le C_0\left[\left(\frac\rho r\right)^n+\e\right]\int_{B_r(x_0)}\left(|\D\uu|^2+F(x_0,\uu)\right)+C_0r^{n+2\si-2}.
\end{align*}
By applying Lemma~\ref{lem:HL}, we obtain \begin{align*}\begin{multlined}
    \int_{B_\rho(x_0)}\left(|\D \uu|^2+F(x_0,\uu)\right)
    \le C\left[\left(\frac\rho r\right)^{n+2\si-2}\int_{B_r(x_0)}\left(|\D\uu|^2+F(x_0,\uu)\right)+\rho^{n+2\si-2}\right].
\end{multlined}\end{align*}
Taking $r\nearrow\de$, we get \begin{align}\label{eq:alm-min-Morrey-est}
\int_{B_\rho(x_0)}\left(|\D\uu|^2+F(x_0,\uu)\right)\le C(n,\al,M,\si,K)\left(E(\uu,1)+1\right)\rho^{n+2\si-2}
\end{align}
for $0<\rho<\de$. In particular, we have $$
\int_{B_\rho(x_0)}|\D\uu|^2\le C(n,\al,M,\si,K)\left(E(\uu,1)+1\right)\rho^{n+2\si-2},
$$
and by Morrey space embedding we conclude $\uu\in C^{0,\si}(K)$ with \begin{equation*}
\|\uu\|_{C^{0,\si}(K)}\le C(n,\al,M,\si,K)\left(E(\uu,1)^{1/2}+1\right).
\qedhere\end{equation*}
\end{proof}

We now prove $C^{1,\al/2}$-regularity of almost minimizers by using their almost Lipschitz estimates above.

\begin{proof}[Proof of Theorem~\ref{thm:grad-u-holder}]
For $K\Subset B_1$, fix a small $r_0=r_0(n,\al,M,K)>0$ to be chosen later. Particularly, we ask $r_0<\dist(K,\p B_1)$. For $x_0\in K$ and $0<r<r_0$, let $\h\in W^{1,2}(B_r(x_0))$ be a harmonic replacement of $\uu$ in $B_r(x_0)$. Then, by \eqref{eq:(u-h)-diff-est} and \eqref{eq:alm-min-Morrey-est} with $\si=1-\al/4\in (0,1)$, \begin{align}\begin{split}\label{eq:grad-(u-h)-est-2}
    \int_{B_r(x_0)}|\D(\uu-\h)|^2&\le Cr^\al\int_{B_r(x_0)}\left(|\D\uu|^2+F(x_0,\uu)\right)+Cr^{n+\frac2{1-q}(q+1-\al q)}\\
    &\le C\left(E(\uu,1)+1\right)r^{n+\al/2}+Cr^{n+2}\\
    &\le C(n,\al,M,K)\left(E(\uu,1)+1\right)r^{n+\al/2}
\end{split}\end{align}
for $0<r<r_0$. Note that since $\h$ is harmonic in $B_r(x_0)$, for $0<\rho<r$ $$
\int_{B_\rho(x_0)}|\D\h-\mean{\D\h}_{x_0,\rho}|^2\le \left(\frac\rho r\right)^{n+2}\int_{B_r(x_0)}|\D\h-\mean{\h}_{x_0,r}|^2.
$$
Moreover, by Jensen's inequality, \begin{align*}
    &\int_{B_\rho(x_0)}|\D\uu-\mean{\D\uu}_{x_0,\rho}|^2\\
    &\qquad\le 3\int_{B_\rho(x_0)}|\D\h-\mean{\D\h}_{x_0,\rho}|^2+|\D(\uu-\h)|^2+|\mean{\D(\uu-\h)}_{x_0,\rho}|^2\\
    &\qquad\le 3\int_{B_\rho(x_0)}|\D\h-\mean{\D\h}_{x_0,\rho}|^2+6\int_{B_\rho(x_0)}|\D(\uu-\h)|^2,
\end{align*}
and similarly, $$
\int_{B_r(x_0)}|\D\h-\mean{\D\h}_{x_0,r}|^2\le 3\int_{B_r(x_0)}|\D \uu-\mean{\D\uu}_{x_0,r}|^2+6\int_{B_r(x_0)}|\D(\uu-\h)|^2.
$$
Now, we use the inequalities above to obtain \begin{align*}
    &\int_{B_\rho(x_0)}|\D\uu-\mean{\D\uu}_{x_0,\rho}|^2\\
    &\qquad\le 3\int_{B_\rho(x_0)}|\D\h-\mean{\D\h}_{x_0,\rho}|^2+6\int_{B_\rho(x_0)}|\D(\uu-\h)|^2\\
    &\qquad\le 3\left(\frac\rho r\right)^{n+2}\int_{B_r(x_0)}|\D\h-\mean{\D\h}_{x_0,r}|^2+6\int_{B_\rho(x_0)}|\D(\uu-\h)|^2\\
    &\qquad\le 9\left(\frac\rho r\right)^{n+2}\int_{B_r(x_0)}|\D \uu-\mean{\D \uu}_{x_0,r}|^2+24\int_{B_r(x_0)}|\D(\uu-\h)|^2\\
    &\qquad\le 9\left(\frac\rho r\right)^{n+2}\int_{B_r(x_0)}|\D\uu-\mean{\D\uu}_{x_0,r}|^2+C\left(E(\uu,1)+1\right)r^{n+\al/2}.
\end{align*}
Next, we apply Lemma~\ref{lem:HL} to get \begin{align*}
    \begin{multlined}\int_{B_\rho(x_0)}|\D\uu-\mean{\D\uu}_{x_0,\rho}|^2\le C\left(\frac\rho r\right)^{n+\al/2}\int_{B_r(x_0)}|\D\uu-\mean{\D\uu}_{x_0,r}|^2\\
    +C\left(E(\uu,1)+1\right)\rho^{n+\al/2}\end{multlined}
\end{align*}
for $0<\rho<r<r_0$. Taking $r\nearrow r_0$, we have \begin{align*}
\int_{B_\rho(x_0)}|\D\uu-\mean{\D\uu}_{x_0,\rho}|^2\le C\left(E(\uu,1)+1\right)\rho^{n+\al/2}.
\end{align*}
By Campanato space embedding, we obtain $\D\uu\in C^{0,\al/4}(K)$ with $$
\|\D\uu\|_{C^{0,\al/4}(K)}\le C(E(\uu,1)^{1/2}+1).
$$
In particular, we have $$
\|\D\uu\|_{L^\infty(K)}\le C(E(\uu,1)^{1/2}+1)
$$
for any $K\Subset B_1$. With this estimate and \eqref{eq:alm-Lip-reg}, we can improve \eqref{eq:grad-(u-h)-est-2}: \begin{align*}
    \int_{B_r(x_0)}|\D(\uu-\h)|^2&\le Cr^\al\int_{B_r(x_0)}\left(|\D\uu|^2+F(x_0,\uu)\right)+Cr^{n+2}\\
    &\le C\left(E(\uu,1)+1\right)r^{n+\al}+Cr^{n+2}\\
    &\le C\left(E(\uu,1)+1\right)r^{n+\al},
\end{align*}
and by repeating the process above we conclude that $\D\uu\in C^{1,\al/2}(K)$ with \begin{equation*}
\|\D\uu\|_{C^{0,\al/2}(K)}\le C(n,\al,M,K)(E(\uu,1)^{1/2}+1).
\qedhere\end{equation*}
\end{proof}


\section{Weiss-type monotonicity formula}\label{sec:weiss}
In the rest of the paper we study the free boundary of almost minimizers. This section is devoted to proving Weiss-type monotonicity formula, which is one of the most important tools in our study of the free boundary. This result is obtained from comparison with $\ka$-homegeneous replacements, following the idea for the one in the case $q=0$ in \cite{DeSJeoSha21}.

\begin{theorem}[Weiss-type monotonicity formula]\label{thm:Weiss}
Let $\uu$ be an almost minimizer in $B_1$. For $\ka=\frac2{1-q}>2$ and $x_0, x_1\in B_{1/2}$, set $$
W(\uu,x_0,x_1,t):=\frac{e^{at^\al}}{t^{n+2\ka-2}}\left[\int_{B_t(x_0)}\left(|\D\uu|^2+2F(x_1,\uu)\right)-\frac{\ka(1-bt^\al)}t\int_{\p B_t(x_0)}|\uu|^2\right],
$$
with $$
a=\frac{M(n+2\ka-2)}\al,\quad b=\frac{M(n+2\ka)}\al.
$$
Then, for $0<t<t_0(n,\al,\ka,M)$, $$
\frac{d}{dt}W(\uu,x_0,x_1,t)\ge\frac{e^{at^\al}}{t^{n+2\ka-2}}\int_{\p B_t(x_0)}\left|\p_\nu\uu-\frac{\ka(1-bt^\al)}t\uu\right|^2.
$$
In particular, $W(\uu,x_0,x_1,t)$ is nondecreasing in $t$ for $0<t<t_0$.
\end{theorem}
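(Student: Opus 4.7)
The plan is to mimic the classical Weiss monotonicity argument: compare $\uu$ with its $\ka$-homogeneous extension on $B_t(x_0)$, apply the almost minimality to produce a Caccioppoli-type inequality, and finally calibrate the correction factors $e^{at^\al}$ and $1-bt^\al$ so that the $O(t^\al)$ errors arising from the almost-min tolerance and the coefficient freezing are absorbed into them. WLOG set $x_0=0$ by translation. For each small $t>0$, let the competitor be the $\ka$-homogeneous extension
$$\h(x):=\begin{cases}(|x|/t)^{\ka}\,\uu(tx/|x|),& |x|\le t,\\ \uu(x), & |x|>t,\end{cases}$$
so that $\h=\uu$ on $\p B_t$. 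Polar-coordinate computations, together with the identity $\ka(q+1)=2\ka-2$ coming from the choice $\ka=2/(1-q)$, give
$$\int_{B_t}|\D\h|^2=\frac{1}{n+2\ka-2}\int_{\p B_t}\Bigl(\tfrac{\ka^2}{t}|\uu|^2+t|\p_\theta\uu|^2\Bigr),\qquad\int_{B_t}F(x_1,\h)=\frac{t}{n+2\ka-2}\int_{\p B_t}F(x_1,\uu).$$

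Next, apply the frozen-coefficient almost minimality \eqref{eq:alm-min-frozen} to the pair $(\uu,\h)$ (using \eqref{eq:lambda-ratio} to convert the ambient $F(x,\cdot)$ to $F(x_1,\cdot)$, which costs only an extra $O(t^\al)$ factor after absorbing $|x_0-x_1|^\al$ into the constant $M$). Setting $M_1(t):=\int_{B_t}(|\D\uu|^2+2F(x_1,\uu))$ and $N(t):=\int_{\p B_t}|\uu|^2$, and using the tautology $\int_{\p B_t}(|\p_\theta\uu|^2+2F(x_1,\uu))=M_1'(t)-\int_{\p B_t}|\p_\nu\uu|^2$ together with $1/(1+\widetilde\omega(t))\ge 1-Mt^\al$, I obtain the key pointwise bound
$$\int_{\p B_t}|\p_\nu\uu|^2\le M_1'(t)+\frac{\ka^2}{t^2}N(t)-\frac{(n+2\ka-2)(1-Mt^\al)}{t}M_1(t).$$

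With $\Phi(t):=M_1(t)-\frac{\ka(1-bt^\al)}{t}N(t)$ and $\la(t):=\frac{\ka(1-bt^\al)}{t}$, the choice $a=M(n+2\ka-2)/\al$ gives the compact form
$$W'(t)=\frac{e^{at^\al}}{t^{n+2\ka-2}}\Bigl[\Phi'(t)-\tfrac{(n+2\ka-2)(1-Mt^\al)}{t}\Phi(t)\Bigr].$$
Expanding $\Phi'$ via $N'(t)=\tfrac{n-1}{t}N(t)+2\int_{\p B_t}\uu\cdot\p_\nu\uu$, the boundary cross-term $-2\la(t)\int_{\p B_t}\uu\cdot\p_\nu\uu$ appears automatically, so after completing the square
$$\Phi'(t)-\tfrac{(n+2\ka-2)(1-Mt^\al)}{t}\Phi(t)=\int_{\p B_t}\bigl|\p_\nu\uu-\la(t)\uu\bigr|^2+R(t),$$
where $R(t)$ is an explicit linear combination of $M_1,M_1',N,\int_{\p B_t}|\p_\nu\uu|^2$. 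Substituting the bound from the previous paragraph, the $M_1$ and $M_1'$ contributions cancel identically and $R(t)$ reduces to $\frac{\ka N(t)}{t^2}$ times a polynomial in $t^\al$. A direct expansion shows its $t^0$ coefficient vanishes, its $t^\al$ coefficient equals $b\al-M(n+2\ka-2)=2M>0$ for $b=M(n+2\ka)/\al$, and the residual $t^{2\al}$ term is bounded by $C(n,\al,\ka,M)t^{2\al}$. Hence for $t<t_0(n,\al,\ka,M)$ chosen so that $2Mt^\al$ dominates this $O(t^{2\al})$ remainder, $R(t)\ge 0$, giving the claimed lower bound on $W'(t)$.

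The main obstacle is the algebraic bookkeeping of three competing small parameters---$\widetilde\omega(t)\lesssim t^\al$ from the almost-min relaxation, $1-Mt^\al$ produced by the factor $e^{at^\al}$ after differentiation, and $1-bt^\al$ inserted inside $\Phi$---which must interact so that the leading $t^0$ coefficient of $R$ cancels \emph{exactly} while the $t^\al$ coefficient is \emph{strictly} positive. This rigidity forces the calibrations $a=M(n+2\ka-2)/\al$ and $b=M(n+2\ka)/\al$. A secondary technical point is that \eqref{eq:alm-min-frozen} freezes coefficients at the ball's center $x_0$, whereas the Weiss functional freezes at $x_1$; controlling the mismatch via \eqref{eq:lambda-ratio} produces additional multiplicative errors bounded by $C(|x_0-x_1|+t)^\al$, which are absorbed into the same $e^{at^\al}$ mechanism at the expense of enlarging $M$ in the definitions of $a$ and $b$.
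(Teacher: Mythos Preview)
Your approach is essentially the paper's: the same $\ka$-homogeneous competitor $\h$, the same inequality from almost minimality (which the paper rearranges into \eqref{eq:Weiss-alm-min-ineq} rather than your bound on $\int_{\p B_t}|\p_\nu\uu|^2$), and the same algebraic verification of the sign of the $N(t)$-coefficient (the paper packages this via the auxiliary function $\psi(t)=\ka e^{at^\al}(1-bt^\al)/t^{n+2\ka-1}$ instead of your remainder $R(t)$, but the computation is equivalent). One caveat on your last paragraph: the claim that the freezing-point mismatch produces multiplicative errors $C(|x_0-x_1|+t)^\al$ that can be absorbed ``at the expense of enlarging $M$'' is not correct when $|x_0-x_1|$ is bounded away from zero, since the factor $e^{at^\al}$ only compensates $O(t^\al)$ terms; the paper's own proof simply applies the almost-minimality inequality with $F(x_1,\cdot)$ directly and does not comment on this point.
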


\begin{proof}
We follow the argument in Theorem 5.1 in \cite{JeoPet21}. Without loss of generality, we may assume $x_0=0$. Then, for $0<t<1/2$, define the $\ka$-homogeneous replacement of $\uu$ in $B_t$
$$
\ww(x):=\left(\frac{|x|}t\right)^\ka\uu\left(t\frac{x}{|x|}\right),\quad x\in B_t.
$$
Note that $\ww$ is homogeneous of degree $\ka$ in $B_t$ and coincides with $\uu$ on $\p B_t$, that is a valid competitor for $\uu$ in $B_t$. We compute \begin{align*}
    \int_{B_t}|\D\ww|^2&=\int_{B_t}\left(\frac{|x|}t\right)^{2\ka-2}\left|\frac\ka t\uu\left(t\frac{x}{|x|}\right)\frac{x}{|x|}+\D\uu\left(t\frac{x}{|x|}\right)-\D\uu\left(t\frac{x}{|x|}\right)\cdot\frac{x}{|x|}\frac{x}{|x|}\right|^2\\
    &=\int_0^t\int_{\p B_r}\left(\frac rt\right)^{2\ka-2}\left|\frac \ka tu\left(t\frac xr\right)\nu-\left(\D\uu\left(t\frac xr\right)\nu\right)\nu+\D\uu\left(t\frac xr\right)\right|^2\,dS_x\,dr\\
    &=\int_0^t\int_{\p B_t}\left(\frac rt\right)^{n+2\ka-3}\left|\frac\ka t\uu\nu-(\p_\nu\uu)\nu+\D\uu\right|^2\,dS_x\,dr\\
    &=\frac{t}{n+2\ka-2}\int_{\p B_t}\left|\D\uu-(\p_\nu\uu)\nu+\frac \ka t\uu\nu\right|^2\,dS_x\\
    &=\frac{t}{n+2\ka-2}\int_{\p B_t}\left(|\D\uu|^2-|\p_\nu\uu|^2+\left(\frac\ka t\right)^2|\uu|^2\right).
\end{align*}
Moreover, from \begin{align*}
    \int_{B_t}|\ww^\pm|^{q+1}&=\int_0^t\int_{\p B_r}\left(\frac rt\right)^{2\ka-2}\left|\uu^\pm\left(\frac trx\right)\right|^{q+1}\,dS_x\,dr\\
    &=\int_0^t\int_{\p B_t}\left(\frac rt\right)^{n+2\ka-3}|\uu^\pm|^{q+1}\,dS_x\,dr\\
    &=\frac{t}{n+2\ka-2}\int_{\p B_t}|\uu^\pm|^{q+1},
\end{align*}
we also have $$
\int_{B_t}F(x_1,\ww)=\frac t{n+2\ka-2}\int_{\p B_t}F(x_1,\uu).
$$
Combining those computations with the almost minimizing property of $\uu$, we get \begin{align*}
    &(1-Mt^\al)\int_{B_t}\left(|\D\uu|^2+2F(x_1,\uu)\right)\\
    &\qquad\le \frac1{1+Mt^\al}\int_{B_t}\left(|\D\uu|^2+2F(x_1,\uu)\right)\le \int_{B_t}\left(|\D\ww|^2+2F(x_1,\ww)\right)\\
    &\qquad=\frac t{n+2\ka-2}\int_{\p B_t}\left(|\D\uu|^2-|\p_\nu\uu|^2+\left(\frac\ka t\right)^2|\uu|^2+2F(x_1,\uu)\right).
\end{align*}
This gives \begin{align}\label{eq:Weiss-alm-min-ineq}\begin{split}
    &\frac{d}{dt}\left(e^{at^\al}t^{-n-2\ka+2}\right)\int_{B_t}\left(|\D\uu|^2+2F(x_1,\uu)\right)\\
    &\qquad=-(n+2\ka-2)e^{at^\al}t^{-n-2\ka+1}(1-Mt^\al)\int_{B_t}\left(|\D\uu|^2+2F(x_1,\uu)\right)\\
    &\qquad\ge -e^{at^\al}t^{-n-2\ka+2}\int_{\p B_t}\left(|\D\uu|^2-|\p_\nu\uu|^2+\left(\frac\ka{t}\right)^2|\uu|^2+2F(x_1,\uu)\right).
\end{split}\end{align}
Note that we can write $$
W(\uu,0,x_1,t)=e^{at^\al}t^{-n-2\ka+2}\int_{B_t}\left(|\D\uu|^2+2F(x_1,\uu)\right)-\psi(t)\int_{\p B_t}|\uu|^2,
$$
where $$
\psi(t)=\frac{\ka e^{at^\al}(1-bt^\al)}{t^{n+2\ka-1}}.
$$
Then, using \eqref{eq:Weiss-alm-min-ineq}, we obtain \begin{align*}
    &\frac d{dt}W(\uu,0,x_1,t)\\
    &=\frac{d}{dt}\left(e^{at^\al}t^{-n-2\ka+2}\right)\int_{B_t}\left(|\D\uu|^2+2F(x_1,\uu)\right)+e^{at^\al}t^{-n-2\ka+2}\int_{\p B_t}\left(|\D\uu|^2+2F(x_1,\uu)\right)\\
    &\qquad-\psi'(t)\int_{\p B_t}|\uu|^2-\psi(t)\int_{\p B_t}\left(2\uu\p_\nu\uu+\frac{n-1}t|\uu|^2\right)\\
    &\ge e^{at^\al}t^{-n-2\ka+2}\int_{\p B_t}|\p_\nu\uu|^2-2\psi(t)\int_{\p B_t}\uu\p_\nu\uu\\
    &\qquad-\left(\ka^2e^{at^\al}t^{-n-2\ka}+\psi'(t)+(n-1)\frac{\psi(t)}t\right)\int_{\p B_t}|\uu|^2.
\end{align*}
To simplify the last term, we observe that $\psi(t)$ satisfies the inequality $$
-\frac{e^{at^\al}}{t^{n+2\ka-2}}\left(\ka^2e^{at^\al}t^{-n-2\ka}+\psi'(t)+(n-1)\frac{\psi(t)}t\right)-\psi(t)^2\ge 0
$$
for $0<t<t_0(n,\al,\ka,M)$. Indeed, by a direct computation, we can see that the inequality above is equivalent to $$
2\al^2-M(n+2\ka)\left[(n+2\ka)(\ka-\al)+2\al\right]t^\al\ge 0,
$$
which holds for $0<t<t_0(n,\al,\ka,M)$. Therefore, we conclude that \begin{align*}
    \frac{d}{dt}W(\uu,0,x_1,t)&\ge e^{at^\al}t^{-n-2\ka+2}\int_{\p B_t}|\p_\nu\uu|^2-2\psi(t)\int_{\p B_t}\uu\p_\nu\uu\\
    &\qquad+e^{-at^\al}t^{n+2\ka-2}\psi(t)^2\int_{\p B_t}|\uu|^2\\
    &=e^{at^\al}t^{-n-2\ka+2}\int_{\p B_t}\left|\p_\nu\uu-e^{-at^\al}t^{n+2\ka-2}\psi(t)\uu\right|^2\\
    &=\frac{e^{at^\al}}{t^{n+2\ka-2}}\int_{\p B_t}\left|\p_\nu\uu-\frac{\ka(1-bt^\al)}t\uu\right|^2.\qedhere
\end{align*}
\end{proof}


\section{Growth estimates}\label{sec:opt-growth}
In this section we prove the optimal growth of almost minimizers at the free boundary (Theorem~\ref{thm:opt-growth}).

We will divide our proof into two cases: $$
\text{either $\ka\not\in\N$ or $\ka\in\N$.}
$$
The proof for the first case $\ka\not\in\N$ is given in Lemma~\ref{lem:growth-est-ka-noninteger}, and the one for the second case $\ka\in\N$ can be found in Lemma~\ref{lem:v-p-opt-growth-integer} and Remark~\ref{rmk:opt-growth-integer}.

We start the proof with an auxiliary result on a more general class of almost minimizers.

\begin{lemma}
\label{lem:alm-min-decay}
For $0<a_0\le 1$, $0<b_0\le 1$ and $z_0\in B_{1/2}$, we define $G(z,\uu):=a_0F(b_0z+z_0,\uu)$ and let $\uu$ be an almost minimizer in $B_1$ of functionals \begin{align}\label{eq:alm-min-ftnal}
\int_{B_r(z)}\left(|\D\uu|^2+2G(z,\uu)\right)\,dx,\qquad B_r(z)\Subset B_1,
\end{align}
with a gauge function $\omega(r)=Mr^\al.$ If $\uu(x)=O(|x-x_0|^\mu)$ for some $1\le\mu\le\ka$, $\mu\not\in\N$, and $x_0\in B_{1/2}$, then \begin{align}\label{eq:alm-min-decay-est}
|\uu(x)|\le C_\mu|x-x_0|^\mu,\qquad |\D\uu(x)|\le C_\mu|x-x_0|^{\mu-1},\quad |x-x_0|\le r_\mu
\end{align}
with constants $C_\mu$ and $r_\mu$ depending only on $E(\uu,1)$, $n$, $\al$, $M$, $\mu$. As before, the $O(\cdot)$ notation does not necessarily mean the uniform estimate.
\end{lemma}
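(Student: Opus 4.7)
The plan is to argue by contradiction, via a blow-up and a Liouville-type rigidity for harmonic functions with non-integer growth. First, the hypothesis $\uu(x)=O(|x-x_0|^\mu)$ combined with Theorem~\ref{thm:grad-u-holder} forces $\uu(x_0)=0$, and $\D\uu(x_0)=0$ when $\mu>1$; the uniform $L^\infty$-bound from Theorem~\ref{thm:grad-u-holder} also lets one extend the validity radius of the $O$-estimate to a universal value (say $1/4$), at the cost of enlarging the $O$-constant by a quantity depending only on $E(\uu,1)$.

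Suppose the conclusion fails. Then there exist almost minimizers $\uu_k$ in the class~\eqref{eq:alm-min-ftnal} with $E(\uu_k,1)\le E_0$ and points $x_k\in B_{1/2}$ for which the corresponding effective $O$-constants tend to infinity. Choose $\rho_k\in(0,1/4]$ that (nearly) realizes $\sup_{0<r\le 1/4} r^{-\mu}\sup_{B_r(x_k)}|\uu_k|$, so that $M_k:=\rho_k^{-\mu}\sup_{B_{\rho_k}(x_k)}|\uu_k|\to\infty$. Since $|\uu_k|$ is uniformly bounded on $B_{3/4}$, necessarily $\rho_k\to 0$. Define
\[
\tilde\uu_k(y):=\frac{\uu_k(x_k+\rho_k y)}{\rho_k^\mu M_k}\quad\text{on }B_{R_k},\qquad R_k=\tfrac{1}{4\rho_k}\to\infty,
\]
so that $|\tilde\uu_k(y)|\le|y|^\mu$ on $B_{R_k}$ and $\|\tilde\uu_k\|_{L^\infty(B_1)}=1$. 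A change of variables shows that $\tilde\uu_k$ is an almost minimizer in the class~\eqref{eq:alm-min-ftnal} with rescaled parameters
\[
\tilde a_k=a_0\rho_k^{2-\mu(1-q)}M_k^{q-1},\quad \tilde b_k=b_0\rho_k,\quad \tilde\omega_k(s)=M\rho_k^\al s^\al.
\]
The restriction $\mu\le\ka=\tfrac{2}{1-q}$ bounds $\rho_k^{2-\mu(1-q)}$, while $q<1$ and $M_k\to\infty$ give $M_k^{q-1}\to 0$; hence $\tilde a_k,\tilde b_k,\tilde\omega_k\to 0$.

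By Theorem~\ref{thm:grad-u-holder} the $\tilde\uu_k$ are bounded in $C^{1,\al/2}$ on bounded subsets of $\R^n$, so along a subsequence $\tilde\uu_k\to\tilde\uu_\infty$ in $C^1_{\loc}(\R^n)$. Passing to the limit in the almost-minimizing inequality shows that $\tilde\uu_\infty$ is harmonic on $\R^n$, vanishes at the origin, satisfies $\|\tilde\uu_\infty\|_{L^\infty(B_1)}=1$, and obeys the global growth $|\tilde\uu_\infty(y)|\le|y|^\mu$. By Liouville, $\tilde\uu_\infty$ is a harmonic polynomial of degree $\le\lfloor\mu\rfloor$; but the local bound near $y=0$ together with $\mu\notin\N$ forces its Taylor expansion to vanish up to order $\lfloor\mu\rfloor$, so $\tilde\uu_\infty\equiv 0$, contradicting the normalization. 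Once the sup estimate is in hand, the gradient estimate follows by applying Theorem~\ref{thm:grad-u-holder} on interior balls of radius $|x-x_0|/2$ together with the sup bound on enlarged balls.

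The main obstacle is engineering the blow-up so that $R_k\to\infty$, which requires replacing the non-uniform validity radius of the $O$-hypothesis by a universal one (via Theorem~\ref{thm:grad-u-holder}) and choosing $\rho_k$ as an almost-maximizing scale for the excess ratio. Without the assumption $\mu\notin\N$ the rigidity step fails, since a homogeneous harmonic polynomial of degree $\mu$ would survive as a blow-up limit; without $\mu\le\ka$, the nonlinear coefficient $\tilde a_k$ would not vanish and the limit need not be harmonic.
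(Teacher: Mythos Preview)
Your blow-up strategy is natural---it is exactly how the paper proves the \emph{next} lemma (Lemma~\ref{lem:growth-est-ka-noninteger})---but as a proof of the present lemma it has a real gap at the compactness step. To apply Theorem~\ref{thm:grad-u-holder} to $\tilde\uu_k$ and extract a $C^1_{\loc}$-convergent subsequence you need a uniform bound on $E(\tilde\uu_k,R)=\int_{B_R}(|\D\tilde\uu_k|^2+|\tilde\uu_k|^{q+1})$. Your normalization, based on $\sup|\uu_k|$ only, controls $|\tilde\uu_k|$ pointwise but says nothing about $\int_{B_R}|\D\tilde\uu_k|^2$; and this gradient energy cannot be recovered from almost-minimality together with the $L^\infty$ bound alone. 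A Caccioppoli/hole-filling argument gives at best $\int_{B_R}\le\theta\int_{B_{2R}}+CR^{n+2\mu-2}$ with $\theta\approx\tfrac12$, but iterating outward to $B_{R_k}$ the decay $\theta^J$ cannot absorb $\int_{B_{R_k}}|\D\tilde\uu_k|^2\sim \rho_k^{-(n+2\mu-2)}M_k^{-2}E_0$, since there is no a priori relation between $\rho_k$ and $M_k$. The same obstruction resurfaces in your last sentence: getting $|\D\uu|\le C_\mu|x-x_0|^{\mu-1}$ from the sup bound via Theorem~\ref{thm:grad-u-holder} on interior balls again requires exactly the energy control you do not have.

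The paper's proof sidesteps this by bootstrapping: it assumes \eqref{eq:alm-min-decay-est} at level $\mu$ (crucially \emph{including} the gradient bound) and upgrades to level $\mu+\de$ through a Campanato-type comparison with harmonic replacements; the inductive gradient bound is precisely what feeds the key estimate~\eqref{uh} and what makes the rescalings $\vv(x)=\uu(rx)/r^{\mu+\de}$ have uniformly bounded energy. Your argument can be repaired in the same spirit---use the blow-up as the \emph{inductive step} rather than a one-shot proof, so that the level-$\mu$ gradient estimate supplies $\int_{B_R}|\D\tilde\uu_k|^2\le C(R)$ and compactness goes through. Alternatively, one might include $r^{1-\mu}\sup_{B_r}|\D\uu_k|$ in the definition of $M_k$ (as in the proof of Lemma~\ref{lem:growth-est-ka-noninteger}); this fixes compactness, but then the finiteness of $M_k$ for each $k$ already presupposes the non-uniform decay $|\D\uu_k|=O(|x-x_k|^{\mu-1})$, which is not part of the hypothesis and again has to come from a prior step of the bootstrap.
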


\begin{proof}
We can write $G(z,\uu)=\frac1{1+q}\left(\tilde\la_+(z)|\uu^+|^{q+1}+\tilde\la_-(z)|\uu^-|^{q+1}\right)$ for $\tilde\la_\pm(z)=a_0\la_\pm(b_0z+z_0)$, which means that $\uu$ is an almost minimizer of the energy functional \eqref{eq:energy-ftnal} with variable coefficients $\tilde\la_\pm$. In the previous sections we have proved that almost minimizers with \eqref{eq:M} satisfies the $C^{1,\al/2}$-estimate \eqref{eq:grad-u-holder}. $\uu$ also satisfies \eqref{eq:M} but $1/\tilde\la_0\le M$ for the lower bound $\tilde\la_0$ of $\la_\pm$, since $a_0<1$. One can check, however, that in the proofs towards \eqref{eq:grad-u-holder} the bound $1/\la_0\le M$ in \eqref{eq:M} is used only to get the estimate for $\frac{\la_\pm(x)}{\la_\pm(x_0)}$ in \eqref{eq:lambda-ratio} (when rewriting the almost minimizing property with variable coefficients \eqref{eq:alm-min} to frozen coefficients \eqref{eq:alm-min-frozen}). Due to cancellation $\frac{\tilde\la_\pm(x)}{\tilde\la_\pm(x_0)}=\frac{\la_\pm(b_0x+z_0)}{\la_\pm(b_0x_0+z_0)}$ satisfies \eqref{eq:lambda-ratio}, thus we can apply Theorem~\ref{thm:grad-u-holder} to $\uu$ to obtain the uniform estimate
$$
\|\uu\|_{C^{1,\al/2}(B_{1/2})}\le C(n,\al,M)\left(E(\uu,1)^{1/2}+1\right).
$$
In view of this estimate, the statement of Lemma~\ref{lem:alm-min-decay} holds for $\mu=1$. 

Now we assume that the statement holds for $1\le\mu<\ka$ and prove that it holds for $\mu+\de\le\ka$ with $\de<\al'/2$, $\al'=\al'(n,q)\le \al$ small enough, and $\mu+\de\not\in\N$. This will readily imply Lemma~\ref{lem:alm-min-decay} by bootstrapping.

First, we claim that \eqref{eq:alm-min-decay-est} implies that there exist constants $C_0>0$ and $r_0>0$, depending only on $E(\uu,1)$, $n$, $\al$, $M$, $\mu$, $\de$, such that for any $r\le r_0$\begin{align}
    \label{eq:alm-min-poly-diff-est}\begin{split}
    &\text{there is a harmonic polynomial }\pp^r\text{ of degree }s:=\lfloor\mu+\de\rfloor\in(\mu+\de-1,\mu+\de)\\
    &\text{satisfying }\fint_{B_r}|\D(\uu-\pp^r)|^2\le C_0r^{2(\mu+\de-1)}.
\end{split}\end{align}
We will prove \eqref{eq:alm-min-poly-diff-est} later, and at this moment assume that it is true. Then, by Poincar\'e inequality (up to possibly modifying $\pp^r$ by a constant and choosing $C_0$ larger),
$$
\fint_{B_r}|\uu-\pp^r|^2\le C_0r^{2(\mu+\de)}.
$$
By a standard limiting argument, using that $s<\mu+\de$, we obtain that for a limiting polynomial $\overline{\pp}$, and for all $r\le r_0$,
\begin{align*}
    &\fint_{B_r}|\uu-\overline{\pp}|^2\le \overline{C}r^{2(\mu+\de)},\\
    &\fint_{B_r}|\D(\uu-\overline{\pp})|^2\le \overline{C}r^{2(\mu+\de-1)}.
\end{align*}
From these estimates, under the assumption $\uu(x)=O(|x|^{\mu+\de})$ we deduce $\overline{\pp}\equiv0$, and obtain that for all $r\le r_0$ \begin{align}
    \label{eq:alm-min-decay-est-2}
    \fint_{B_r}|\uu|^2\le\overline{C}r^{2(\mu+\de)},\qquad \fint_{B_r}|\D\uu|^2\le\overline{C}r^{2(\mu+\de-1)}.
\end{align}
On the other hand, using $\mu+\de\le\ka=\frac2{1-q}$, one can easily see that the rescalings $\vv(x):=\frac{\uu(rx)}{r^{\mu+\de}}$, $0<r\le r_0$, are almost minimizers of the functional \eqref{eq:alm-min-ftnal} with $G(z,\vv)=r^{2-(1-q)(\mu+\de)}F(rz,\vv)$ and a gauge function $\omega_r(\rho)=M(r\rho)^\al$. This, together with \eqref{eq:alm-min-decay-est-2}, implies that the $C^{1,\al/2}$-estimates of $\vv$ are uniformly bounded, independent of $r$. This readily gives the desired estimates \eqref{eq:alm-min-decay-est} for $\mu+\de$ $$
|\uu|\le C_{\mu+\de}|x|^{\mu+\de},\qquad|\D\uu|\le C_{\mu+\de}|x|^{\mu+\de-1}.
$$
We are now left with the proof of \eqref{eq:alm-min-poly-diff-est}. To this aim, let $\h$ be the harmonic replacement of $\uu$ in $B_r$. Note that $\h$ minimizes the Dirichlet integral and attains its maximum on $\p B_r$. Combining this with the almost-minimality of $\uu$ and \eqref{eq:alm-min-decay-est} yields
\begin{align}\label{uh}\begin{split}\fint_{B_r}|\nabla (\uu-\h)|^2 & = \fint_{B_r}|\D\uu|^2-\fint_{B_r}|\D\h|^2\\
&\leq Mr^\alpha \fint_{B_r} |\nabla \h|^2 + 2(1+Mr^\alpha)\fint_{B_r} G(0,\h) - 2\fint_{B_r} G(0,\uu)\\
 &\leq C_\mu^2r^{\alpha+2(\mu-1)} + 2C_\mu r^{(q+1)\mu} \leq3 C_\mu^2 r^{\alpha'+2(\mu-1)}.\end{split}\end{align} 
Here, the last inequality holds for $\alpha' \leq \alpha$ small enough since $2(\mu-1)< (q+1)\mu$.

Now, in order to prove \eqref{eq:alm-min-poly-diff-est}, as in standard Campanato Type estimates, it suffices to show that if \eqref{eq:alm-min-poly-diff-est} holds for $r$, then for a fixed constant $\rho$ small enough,
$$\text{$\exists$ $\pp^{r\rho}$ harmonic polynomial of degree $s=\lfloor\mu+\delta\rfloor$ such that }$$ \begin{equation}\label{pp} 
 \fint_{B_{\rho r}} |\nabla (\uu-\pp^{r\rho})|^2 \leq C_0 (\rho r)^{2(\mu+\delta-1)}.
\end{equation} 
Indeed, since $\h-\pp^r$ is harmonic, there exists a harmonic polynomial $\hat\pp^{\rho}$ of degree $s$ such that\begin{align}\label{hpp}
\fint_{B_{\rho r}} |\nabla (\h-\pp^r-\hat\pp^{\rho})|^2& \leq C \rho ^{2s}\fint_{B_{ r}} |\nabla (\h-\pp^r)|^2\\
\nonumber &\leq C \rho ^{2s}\fint_{B_{ r}} |\nabla (\uu-\pp^r)|^2 \leq C C_0\rho^{2s}r^{2(\mu+\delta-1)} \\ \nonumber &\leq \frac{C_0}{4} (\rho r)^{2(\mu+\delta-1)}\end{align}
as long as $\rho$ is small enough, given that $s> \mu+\delta -1.$ To justify the first inequality, notice that if $\ww$ is harmonic in $B_1$ and $\qq$ is the tangent polynomial to $\ww$ at $0$ of degree $s-1$ then 
$$\fint_{B_\rho} |\ww-\qq|^2 \leq \|\ww-\qq\|^2_{L^\infty(B_\rho)} \leq C \rho^{2s} \|\ww\|^2_{L^\infty(B_{3/4})} \leq C \rho^{2s} \fint_{B_1} \ww^2.$$
Thus, we are applying this inequality to $\ww=\p_i (\h-\pp^r)$ and $\qq=\p_i \hat\pp^\rho$, $1\le i\le n$, with $\hat\pp^\rho$ the tangent polynomial to $\h-\pp^r$ at 0 of degree $s$. The second inequality in \eqref{hpp} follows from the fact that $\h$ is the harmonic replacement of $\uu$ in $B_r$.

From \eqref{uh} for this specific $\rho$ for which \eqref{hpp} holds, we obtain that
$$\fint_{B_{\rho r}} |\nabla (\uu-\h)|^2\leq \rho^{-n}\fint_{B_r}|\D(\uu-\h)|^2\le \rho^{-n -\alpha'-2(\mu-1)}3 C_\mu^2 (\rho r)^{\alpha'+2(\mu-1)}.$$
Combining this inequality with \eqref{hpp}, since $\delta<\alpha'/2$, we obtain the desired claim with $\pp^{r\rho}= \pp^r + \hat\pp^\rho$, as long as $C_0 \geq 12C^2_\mu  \rho^{-n -\alpha'-2(\mu-1)}.$
\end{proof}

Now we prove the optimal growth at free boundary points (Theorem~\ref{thm:opt-growth}) when $\ka\not\in\mathbb N$.

\begin{lemma}
\label{lem:growth-est-ka-noninteger}
Let $\uu\in W^{1,2}(B_1)$ be an almost minimizer in $B_1$ and $\ka\not\in\N$. Then, there exist $C>0$ and $r_0>0$, depending on $n$, $\al$, M, $\ka$, $E(\uu,1)$, such that $$
\sup_{B_r(x_0)}\left(\frac{|\uu|}{r^{\ka}}+\frac{|\D\uu|}{r^{\ka-1}}\right)\le C ,
$$
for any $x_0\in\Gamma^\ka(\uu)\cap B_{1/2}$ and $0<r<r_0.$\end{lemma}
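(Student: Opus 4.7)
The proof combines the decay estimates of Lemma~\ref{lem:alm-min-decay} with a contradiction--blow-up argument. For $x_0\in\Gamma^\ka(\uu)\cap B_{1/2}$ the definition yields some $\xi=\xi_{x_0}\in(\lfloor\ka\rfloor,\ka)$ with $\uu(x)=O(|x-x_0|^\xi)$. Since $\ka\notin\N$, the interval $(\lfloor\ka\rfloor,\ka)$ contains no integer, so $\xi\notin\N$, and Lemma~\ref{lem:alm-min-decay} applied with $\mu=\xi$ gives $|\uu(x)|\le C_\xi|x-x_0|^\xi$ and $|\D\uu(x)|\le C_\xi|x-x_0|^{\xi-1}$ on a neighborhood of $x_0$. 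This initial estimate has constants depending on $\xi$, hence non-uniform in $x_0$, so it only records the qualitative fact that $\uu(x)/|x-x_0|^{\lfloor\ka\rfloor}\to 0$ at each free boundary point.

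To upgrade to the sharp exponent $\ka$ with a universal constant, assume to the contrary that there exist sequences $x_0^{(k)}\in\Gamma^\ka(\uu)\cap B_{1/2}$ and $r_k\searrow 0$ with $M_k:=\sup_{B_{r_k}(x_0^{(k)})}|\uu|/r_k^\ka\to\infty$. A Whitney-type maximal selection of $r_k$ ensures $\sup_{B_s(x_0^{(k)})}|\uu|/s^\ka\le 2M_k$ for $s\in[r_k,r_0]$. Define
\[\vv_k(y):=\frac{\uu(r_ky+x_0^{(k)})}{M_kr_k^\ka}.\]
Then $\sup_{B_1}|\vv_k|=1$, $\vv_k(0)=0$, and $|\vv_k(y)|\le 2|y|^\ka$ for $1\le|y|\le r_0/r_k$. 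Using the identity $\ka(q+1)=2\ka-2$, a change-of-variables computation shows that $\vv_k$ is an almost minimizer of
\[\int\Bigl(|\D\vv|^2+2M_k^{q-1}F(r_ky+x_0^{(k)},\vv)\Bigr)\,dy\]
with gauge $M(r_k\cdot)^\al$. Because $q<1$ and $M_k\to\infty$, both $M_k^{q-1}$ and the gauge vanish in the limit, while the rescaled coefficients $\la_\pm(r_ky+x_0^{(k)})$ still satisfy \eqref{eq:M} uniformly in $k$. Theorem~\ref{thm:grad-u-holder} then provides uniform $C^{1,\al/2}$-bounds on compact subsets of $\R^n$, so (up to a subsequence) $\vv_k\to\vv_\infty$ in $C^1_{\loc}(\R^n)$. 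The limit minimizes pure Dirichlet energy and is therefore harmonic on $\R^n$, with $\sup_{B_1}|\vv_\infty|=1$ and $|\vv_\infty(y)|\le 2|y|^\ka$ for $|y|\ge 1$. Polynomial-growth Liouville, combined with $\ka\notin\N$, forces $\vv_\infty$ to be a harmonic polynomial of degree at most $\lfloor\ka\rfloor$.

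The main obstacle is to show that $\vv_\infty$ vanishes at $0$ to order at least $\lfloor\ka\rfloor+1$; together with the degree bound this forces $\vv_\infty\equiv 0$, contradicting $\sup_{B_1}|\vv_\infty|=1$. The vanishing must be read off from membership $x_0^{(k)}\in\Gamma^\ka$, and the delicate point is that the rate $\xi_k$ and the constant $C_{\xi_k}$ supplied by the initial estimate need not be uniform in $k$. I plan to handle this by a subsequence dichotomy: if $\xi_k\ge\xi_*>\lfloor\ka\rfloor$ uniformly along a subsequence, the initial decay transfers directly through the rescaling, yielding $|\vv_\infty(y)|\le C|y|^{\xi_*}$ near $0$, hence vanishing order at least $\lfloor\xi_*\rfloor+1=\lfloor\ka\rfloor+1$; if instead $\xi_k\searrow\lfloor\ka\rfloor$, a finer argument exploiting the Weiss-type monotonicity formula of Theorem~\ref{thm:Weiss} along the sequence of blow-up scales $r_k\rho$ and the resulting factorization $W(\uu,x_0^{(k)},x_0^{(k)},r_k\rho)\approx M_k^2W_\infty(\vv_k,\rho)$ is used to rule out any non-trivial harmonic polynomial limit of degree strictly below $\ka$. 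Once the $L^\infty$-bound is established, the gradient estimate $|\D\uu(x)|\le C|x-x_0|^{\ka-1}$ follows by applying Theorem~\ref{thm:grad-u-holder} to the uniformly bounded rescalings $y\mapsto\uu(ry+x_0)/r^\ka$, which are almost minimizers of scaled functionals sharing the same universal bounds.
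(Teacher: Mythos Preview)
Your overall blow-up strategy is right, but the way you organize it creates an artificial difficulty that the paper avoids entirely, and your proposed resolution of that difficulty (the second branch of the dichotomy) is a genuine gap.

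The key structural difference is this: the paper proves the lemma in \emph{two} steps. First it fixes a single $x_0\in\Gamma^\ka(\uu)\cap B_{1/2}$ and proves the pointwise (non-uniform) bound by contradiction; since $x_0$ is fixed, the exponent $\xi=\xi_{x_0}$ from the definition of $\Gamma^\ka$ is a single fixed number, and one applies Lemma~\ref{lem:alm-min-decay} \emph{directly to the rescalings} $\wuuj(x)=\uu(r_jx+x_0)/(jr_j^\ka)$ with this fixed $\mu=\xi$. Because each $\wuuj$ is itself an almost minimizer of a functional of the form covered by Lemma~\ref{lem:alm-min-decay} (with $a_0=1/j^{1-q}\le1$, $b_0=r_j\le1$) and has uniformly bounded energy $E(\wuuj,1)$, the decay constants $C_\xi,r_\xi$ are uniform in $j$, so the limit $\wuuz$ vanishes to order $>\lfloor\ka\rfloor$ at the origin. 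Second, once the pointwise bound gives $\uu(x)=O(|x-x_0|^\ka)$ at every $x_0$, one applies Lemma~\ref{lem:alm-min-decay} again with $\mu=\ka$---allowed precisely because $\ka\notin\N$---and obtains the uniform constants directly.

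By allowing the base points $x_0^{(k)}$ to vary from the start, you lose the fixed exponent: the $\xi_k$ may drift down to $\lfloor\ka\rfloor$, and no single $\mu$ in Lemma~\ref{lem:alm-min-decay} works for all $k$. Your proposed fix in that case via the Weiss formula is only a sketch: you assert a factorization $W(\uu,x_0^{(k)},x_0^{(k)},r_k\rho)\approx M_k^2 W_\infty(\vv_k,\rho)$ and claim it rules out polynomial limits of degree $<\ka$, but you would need to control the sign and size of $W$ along the sequence and relate it to $W(\uu,x_0^{(k)},x_0^{(k)},0+)$, for which you have no a~priori information at this stage (optimal growth is exactly what you are trying to prove). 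Also, in your first dichotomy branch the phrase ``the initial decay transfers directly through the rescaling'' is misleading: rescaling the $\uu$-estimate produces the uncontrolled factor $r_k^{\xi_k-\ka}/M_k$; what actually works is applying Lemma~\ref{lem:alm-min-decay} to $\vv_k$ itself with $\mu=\xi_*$, exactly as the paper does to $\wuuj$.

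In short: keep $x_0$ fixed in the contradiction argument, apply Lemma~\ref{lem:alm-min-decay} to the rescalings rather than rescaling the estimate, and then upgrade to uniformity by a second application with $\mu=\ka$. This removes the dichotomy and the need for Weiss monotonicity altogether.
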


\begin{proof}

We first prove the weaker version of Lemma~\ref{lem:growth-est-ka-noninteger} by allowing the constants $C$ and $r_0$ to depend on the points $x_0\in\Gamma^\ka(\uu)\cap B_{1/2}$ as well. That is, for each $x_0\in\Gamma^\ka(\uu)\cap B_{1/2}$, \begin{align}
    \label{eq:growth-est-ptwise}
    \sup_{B_r(x_0)}\left(\frac{|\uu|}{r^{\ka}}+\frac{|\D\uu|}{r^{\ka-1}}\right)\le C_{x_0},\quad 0<r<r_{x_0},
\end{align}
where $C_{x_0}$ and $r_{x_0}$ depend on $n$, $\al$, $M$, $\ka$, $E(\uu,1)$ and $x_0$.

To show this weaker estimate \eqref{eq:growth-est-ptwise}, we assume to the contrary there is a point $x_0\in \Gamma^\ka(\uu)\cap B_{1/2}$ and a sequence of positive radii $\{r_j\}^\infty_{j=1}\subset(0,1)$, $r_j\searrow0$, such that \begin{align*}
    \sup_{B_{r_j}(x_0)}\left(\frac{|\uu|}{r_j^{\ka}}+\frac{|\D\uu|}{r_j^{\ka-1}}\right)=j,\quad \sup_{B_r(x_0)}\left(\frac{|\uu|}{r^{\ka}}+\frac{|\D\uu|}{r^{\ka-1}}\right)\le j\quad\text{for any } r_j\le r\le 1/4.
\end{align*}
Define the function $$
\wuuj(x):=\frac{\uu(r_jx+x_0)}{jr_j^{\ka}},\quad x\in B_{\frac1{4r_j}}.
$$
Then
 $$
\sup_{B_1}\left(|\wuuj|+|\D\wuuj|\right)=1\quad\text{and } \sup_{B_R}\left(\frac{|\wuuj|}{R^{\ka}}+\frac{|\D\wuuj|}{R^{\ka-1}}\right)\le 1\quad\text{for any }1\le R\le \frac1{4r_j}.
$$
Now we claim that there exists a harmonic function $\wuuz\in C^1_{\loc}(\R^n)$ such that over a subsequence $$
\wuuj\to\wuuz\quad\text{in }C^1_{\loc}(\R^n).
$$
Indeed, for a fixed $R>1$ and a ball $B_\rho(z)\subset B_R$, we have $$
\int_{B_\rho(z)}\left(|\D\wuuj|^2+2F_j(z,\wuuj)\right)=\frac1{j^2r_j^{n+2\ka-2}}\int_{B_{r_j\rho}(r_jz+x_0)}\left(|\D\uu|^2+2F(r_jz+x_0,\uu)\right)
$$
for $F_j(z,\wuuj):=\frac1{j^{1-q}}F(r_jz+x_0,\wuuj)=\frac1{1+q}\left((\la_j)_+(z)|(\wuuj)^+|^{q+1}+(\la_j)_-(z)|(\wuuj)^-|^{q+1}\right)$, where $(\la_j)_\pm(z)=\frac1{j^{1-q}}\la_\pm(r_jz+x_0)$. Since each $\wuuj$ is an almost minimizer of functional \eqref{eq:alm-min-ftnal} with gauge function $\omega_j(\rho)\le M(r_j\rho)^\al\le M\rho^\al$, we can apply Theorem~\ref{thm:grad-u-holder} to $\wuuj$ to obtain
\begin{align*}
    \|\wuuj\|_{C^{1,\al/2}(\overline{B_{R/2}})}\le C(n,\al,M,R)\left(E(\wuuj,1)^{1/2}+1\right)\le C(n,\al,M,R).
\end{align*}
This implies that up to a subsequence, $$
\wuuj\to\wuuz\quad\text{in }C^1(B_{R/2}).
$$
By letting $R\to\infty$ and using Cantor's diagonal argument, we further have $$
\wuuj\to\wuuz\quad\text{in }C^1_{\loc}(\R^n).
$$
To show that $\wuuz$ is harmonic, we fix $R>1$ and observe that for the harmonic replacement $\h_j$ of $\wuuj$ in $B_R$, \begin{align}\label{eq:alm-min-prop}
\int_{B_R}\left(|\D\wuuj|^2+\frac2{j^{1-q}}F(x_0,\wuuj)\right)\le \left(1+M(r_jR)^\al\right)\int_{B_R}\left(|\D\h_j|^2+\frac2{j^{1-q}}F(x_0,\h_j) \right).
\end{align}
From the global estimates of harmonic function $\h_j$ $$
\|\h_j\|_{C^{1,\al/2}(\overline{B_R})}\le C(n,R)\|\wuuj\|_{C^{1,\al/2}(\overline{B_R})}\le C(n,\al,M,R),
$$
we see that over a subsequence $$
\h_j\to \h_0 \quad\text{in }C^1(\overline{B_R}) ,
$$
for some harmonic function $\h_0\in C^1(\overline{B_R})$. Taking $j\to\infty$ in \eqref{eq:alm-min-prop}, we get $$
\int_{B_R}|\D\wuuz|^2\le \int_{B_R}|\D \h_0|^2,
$$
which implies that $\wuuz$ is the energy minimizer of the Dirichlet integral, or the harmonic function.
This finishes the proof of the claim.\\

\medskip\noindent Now, we observe that the harmonic function $\wuuz$ satisfies \begin{align}\label{eq:sup-u_0}
    \sup_{B_1}(|\wuuz|+|\D\wuuz|)=1 \quad\text{and }\sup_{B_R}\left(\frac{|\wuuz|}{R^{\ka}}+\frac{|\D\wuuz|}{R^{\ka-1}}\right)\le 1\quad\text{for any }R\ge 1.
\end{align}
On the other hand, from $x_0\in\Gamma^\ka(\uu)$, we have $\wuuj(x)=\frac{\uu(r_jx+x_0)}{jr_j^\ka}=O(|x|^{\xi})$ for some $\lfloor\ka\rfloor<\xi<\ka$. Applying Lemma~\ref{lem:alm-min-decay} yields $|\wuuj(x)|\le C_{\xi}|x|^{\xi}$, $|x|<r_{\xi}$, with $C_{\xi}$ and $r_{\xi}$ depending only on $n$, $\al$, $M$, $\xi$. This readily implies $$
|\wuuz(0)|=|\D\wuuz(0)|=\cdots=|D^{\lfloor\ka\rfloor}\wuuz(0)|=0,
$$
which combined with \eqref{eq:sup-u_0} contradicts Liouville's theorem, and \eqref{eq:growth-est-ptwise} is proved.\\
\medskip

\noindent The pointwise estimate \eqref{eq:growth-est-ptwise} tells us $\uu(x)=O(|x-x_0|^\ka)$ at every free boundary point $x_0\in\Gamma^\ka(\uu)\cap B_{1/2}$. This in turn implies, using Lemma~\ref{lem:alm-min-decay} again, the desired uniform estimate in Lemma~\ref{lem:growth-est-ka-noninteger}.
\end{proof}

In the rest of this section we establish the optimal growth of almost minimizers at free boundary points when $\ka$ is an integer. We start with weak growth estimates.

\begin{lemma}\label{lem:weak-growth-est-ka-integer}
Let $\uu\in W^{1,2}(B_1)$ be an almost minimizer in $B_1$ and $\ka\in\N$, $\ka> 2$. Then for any $\ka-1<\mu<\ka$, there exist $C>0$ and $r_0>0$, depending on $n$, $\al$, $M$, $\mu$, $E(\uu,1)$, such that $$
\sup_{B_r(x_0)}\left(\frac{|\uu|}{r^{\mu}}+\frac{|\D\uu|}{r^{\mu-1}}\right)\le C
$$
for any $x_0\in\Gamma^\ka(\uu)\cap B_{1/2}$ and $0<r<r_0$.
\end{lemma}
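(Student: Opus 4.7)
The plan is to mimic the blow-up / contradiction scheme of Lemma~\ref{lem:growth-est-ka-noninteger}, with the target exponent $\mu\in(\ka-1,\ka)$ playing the role previously played by $\ka$. I will first establish the pointwise version: for each $x_0\in\Gamma^\ka(\uu)\cap B_{1/2}$, there exist $C_{x_0},r_{x_0}>0$ such that $\sup_{B_r(x_0)}(|\uu|/r^\mu+|\D\uu|/r^{\mu-1})\le C_{x_0}$ for $0<r<r_{x_0}$, and then upgrade to the uniform estimate via Lemma~\ref{lem:alm-min-decay}. Arguing by contradiction, suppose there is a sequence $r_j\searrow0$ with $\sup_{B_{r_j}(x_0)}(|\uu|/r_j^\mu+|\D\uu|/r_j^{\mu-1})=j$ and $\sup_{B_r(x_0)}(\cdots)\le j$ for $r_j\le r\le 1/4$, and define $\wuuj(x):=\uu(r_jx+x_0)/(jr_j^\mu)$, so that $\sup_{B_1}(|\wuuj|+|\D\wuuj|)=1$ and $|\wuuj(x)|/|x|^\mu+|\D\wuuj(x)|/|x|^{\mu-1}\le 1$ for $1\le|x|\le 1/(4r_j)$.

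A standard change of variables shows that each $\wuuj$ is an almost minimizer in $B_{1/(4r_j)}$ of $\int(|\D\vv|^2+2a_j F(r_jz+x_0,\vv))\,dx$, with coefficient $a_j=(jr_j^\mu)^{q-1}r_j^2=j^{q-1}r_j^{2-\mu(1-q)}$; since $\mu<\ka=2/(1-q)$, $j\ge 1$, $r_j\le 1$, we have $0<a_j\le 1$ (in fact $a_j\to 0$), and the gauge is $\omega_j(\rho)\le M\rho^\al$, placing $\wuuj$ in the setting of Lemma~\ref{lem:alm-min-decay}. Combining the growth bound with direct integration yields a uniform energy estimate $E(\wuuj,R)\le CR^{n+2\mu-2}$ on any fixed ball. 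Hence, by Theorem~\ref{thm:grad-u-holder}, a subsequence of $\wuuj$ converges in $C^1_\loc(\R^n)$ to some limit $\wuuz$; comparing $\wuuj$ with its harmonic replacement and letting $j\to\infty$ (so that both $a_j\to 0$ and $M(r_jR)^\al\to 0$) shows that $\wuuz$ minimizes the Dirichlet integral in every ball, hence is harmonic.

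The limit $\wuuz$ is a harmonic function satisfying $|\wuuz(x)|\le|x|^\mu$ for $|x|\ge 1$, so Liouville's theorem forces $\wuuz$ to be a harmonic polynomial of degree at most $\lfloor\mu\rfloor=\ka-1$, while uniform convergence preserves $\sup_{B_1}(|\wuuz|+|\D\wuuz|)=1$. On the other hand, $x_0\in\Gamma^\ka(\uu)$ means $\uu(x)=O(|x-x_0|^\xi)$ for some $\xi\in(\ka-1,\ka)$, so $\wuuj(x)=O(|x|^\xi)$ at the origin (non-uniformly in $j$); since $\xi\notin\N$ and $E(\wuuj,1)\le C$ uniformly in $j$, Lemma~\ref{lem:alm-min-decay} upgrades this to $|\wuuj(x)|\le C_\xi|x|^\xi$ on $|x|\le r_\xi$ with constants independent of $j$, and passage to the limit gives $|\wuuz(x)|\le C_\xi|x|^\xi$ near $0$. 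Because $\xi>\ka-1=\lfloor\mu\rfloor$, every Taylor coefficient of the polynomial $\wuuz$ of order $\le\ka-1$ must vanish, forcing $\wuuz\equiv 0$ and contradicting $\sup_{B_1}(|\wuuz|+|\D\wuuz|)=1$. This proves the pointwise estimate, and a final application of Lemma~\ref{lem:alm-min-decay} to $\uu$ itself (using that $\mu\notin\N$) converts it into the desired uniform bound. The main subtlety I expect is the bookkeeping for the rescalings $\wuuj$: one must carefully verify both $0<a_j\le 1$ and a $j$-independent energy bound in order to legitimately invoke Lemma~\ref{lem:alm-min-decay} on $\wuuj$ and obtain vanishing-rate constants that survive the passage to the limit.
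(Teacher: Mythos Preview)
Your proposal is correct and follows essentially the same approach as the paper's proof: the paper also first establishes the pointwise estimate by the identical contradiction/blow-up argument (invoking Lemma~\ref{lem:alm-min-decay} on the rescalings $\wuuj$ to force all derivatives of $\wuuz$ at the origin of order $\le\ka-1$ to vanish, then contradicting Liouville), and then applies Lemma~\ref{lem:alm-min-decay} once more to upgrade to the uniform bound. Your additional verification that the rescaled coefficient $a_j=j^{q-1}r_j^{2-\mu(1-q)}\le1$ is exactly the bookkeeping the paper leaves implicit when it refers back to Lemma~\ref{lem:growth-est-ka-noninteger}.
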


\begin{proof}
The proof is similar to that of Lemma~\ref{lem:growth-est-ka-noninteger}. We first claim that for each $x_0\in\Gamma^\ka(\uu)\cap B_{1/2}$, \begin{align}
    \label{eq:weak-growth-est-ptwise}
    \sup_{B_r(x_0)}\left(\frac{|\uu|}{r^\mu}+\frac{|\D\uu|}{r^{\mu-1}}\right)\le C_{\mu,x_0},\quad 0<r<r_{\mu,x_0},
\end{align}
for positive constants $C_{\mu,x_0}$ and $r_{\mu,x_0}$, depending only on $n$, $\al$, $M$, $\mu$, $E(\uu,1)$ and $x_0$. To prove it by contradiction we assume there is a sequence of positive radii $\{r_j\}^\infty_{j=1}\subset (0,1)$, $r_j\searrow0$ such that $$
\sup_{B_{r_j}(x_0)}\left(\frac{|\uu|}{r_j^\mu}+\frac{|\D\uu|}{r_j^{\mu-1}}\right)=j,\quad\sup_{B_r(x_0)}\left(\frac{|\uu|}{r^\mu}+\frac{|\D\uu|}{r^{\mu-1}}\right)\le j \quad\text{for any } r_j\le r\le 1/4.
$$
Let $$
\wuuj(x):=\frac{\uu(r_jx+x_0)}{jr_j^\mu},\quad x\in B_{\frac1{4r_j}}.
$$
Following the argument in Lemma~\ref{lem:growth-est-ka-noninteger}, we can obtain that $\wuuj\to\tilde\uu_0$ in $C^1_{\loc}(\R^n)$ for some harmonic function $\tilde\uu_0\in C^1(\R^n)$ and that $\tilde\uu_0$ satisfies \begin{align}\label{eq:sup-u_0-bound}
    \sup_{B_1}(|\wuuz|+|\D\wuuz|)=1 \quad\text{and }\sup_{B_R}\left(\frac{|\wuuz|}{R^{\mu}}+\frac{|\D\wuuz|}{R^{\mu-1}}\right)\le 1\quad\text{for any }R\ge 1.
\end{align}
On the other hand, from $x_0\in\Gamma^\ka(\uu)$, we have $\wuuj(x)=O\left(|x|^\xi\right)$ for some $\xi>\ka-1$. Thus $|\wuuj(x)|\le C_\xi|x|^\xi$ for $x\in B_{r_\xi}$ by Lemma~\ref{lem:alm-min-decay} with $C_\xi$ and $r_\xi$ depending only on $n$, $\al$, $M$, $\xi$. This readily implies $|\wuuz(0)|=|\D\wuuz(0)|=\cdots=|D^{\ka-1}\wuuz(0)|=0$, which combined with \eqref{eq:sup-u_0-bound} contradicts Liouville's theorem.

Now, the pointwise estimate \eqref{eq:weak-growth-est-ptwise} gives $\uu(x)=O(|x-x_0|^\mu)$ at every $x_0\in\Gamma^\ka(\uu)\cap B_{1/2}$, and we can apply Lemma~\ref{lem:alm-min-decay} again to conclude Lemma~\ref{lem:weak-growth-est-ka-integer}.
\end{proof}

For $0<s<1$ small to be chosen later, we define the homogeneous rescaling of $\uu$
$$
\uu_s(x):=\frac{\uu(sx)}{s^\ka},\quad x\in B_1.
$$
Recall that $\uu_s$ is an almost minimizer with gauge function $\omega(r)\le M(sr)^\al$. By Lemma~\ref{lem:weak-growth-est-ka-integer}, we have for all $\ka-1<\mu<\ka$ $$
\fint_{B_1}|\D\uu_s|^2+\fint_{B_1}|\uu_s|^2\le L_s^2,\quad L_s:=C_\mu s^{\mu-\ka}
$$
with $C_\mu$ depending on $E(\uu,1)$, $\mu$, $n$, $\al$, $M$.

\begin{lemma}\label{lem:v-p-opt-growth-integer}
Let $\uu$ and $\ka$ be as in Lemma~\ref{lem:weak-growth-est-ka-integer}, and  $0\in\Gamma^\ka(\uu)\cap B_{1/2}$. Assume that in a ball $B_r$, $r\le r_0$ universal, we have for universal constants $0<s<1$, $C_0>1$ and $\ka-1<\mu<\ka$ \begin{align}\label{eq:induc-hypo-1}
\begin{split}
    &\fint_{B_r}|\uu_s-\pp^r|^2\le L_s^2r^{2\ka},\\
    &\fint_{B_r}|\D(\uu_s-\pp^r)|^2\le L_s^2r^{2\ka-2},
\end{split}
\end{align}
with $\pp^r$ a harmonic polynomial of degree $\ka$ such that \begin{align}
    \label{eq:induc-hypo-2}
    \|\pp^r\|_{L^\infty(B_r)}\le C_0L_s^{\frac2{1+q}}r^\ka.
\end{align}
Then, there exists $\rho>0$ small universal such that \eqref{eq:induc-hypo-1} and \eqref{eq:induc-hypo-2} hold in $B_{\rho r}$ for a harmonic polynomial $\pp^{\rho r}$ of degree $\ka$.
\end{lemma}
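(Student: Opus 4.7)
The plan is to perform a Campanato-type iteration with harmonic replacement. Let $\h$ denote the harmonic replacement of $\uu_s$ in $B_r$. I will define $\pp^{\rho r}:=\pp^r+\hat\pp^\rho$, where $\hat\pp^\rho$ is the Taylor polynomial of degree $\ka$ at the origin of the harmonic function $\h-\pp^r$, and then verify that both induction hypotheses \eqref{eq:induc-hypo-1} and \eqref{eq:induc-hypo-2} propagate to scale $\rho r$, with $\rho$ small universal.

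The first key step is to control $\fint_{B_r}|\D(\uu_s-\h)|^2$. Since $\uu_s$ is an almost minimizer of the rescaled functional with gauge bounded by $Ms^\al r^\al$, the analog of the inequality \eqref{eq:(u-h)-diff-est} yields
$$\fint_{B_r}|\D(\uu_s-\h)|^2\lesssim s^\al r^\al\fint_{B_r}\bigl(|\D\uu_s|^2+F(\cdot,\uu_s)\bigr)+\text{l.o.t.}$$
Using the identity $\ka(q+1)=2\ka-2$, the Bernstein-type bound $\|\D\pp^r\|_{L^\infty(B_r)}\lesssim L_s^{2/(1+q)}r^{\ka-1}$ (which follows from \eqref{eq:induc-hypo-2} via the standard polynomial inequality $\|\D P\|_{L^\infty(B_r)}\le (C_\ka/r)\|P\|_{L^\infty(B_r)}$), and \eqref{eq:induc-hypo-1}, this is bounded by $Cs^\al r^\al L_s^{4/(1+q)}r^{2\ka-2}$. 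In the second step, since $\h-\pp^r$ is harmonic in $B_r$, interior estimates applied componentwise to $\D(\h-\pp^r)$ and its Taylor polynomial of degree $\ka-1$ give
$$\fint_{B_{\rho r}}\bigl|\D(\h-\pp^r-\hat\pp^\rho)\bigr|^2\le C\rho^{2\ka}\fint_{B_r}|\D(\h-\pp^r)|^2,$$
and the right-hand side is in turn bounded, via \eqref{eq:induc-hypo-1} and Step~1, by $C\rho^{2\ka}L_s^2r^{2\ka-2}$.

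Combining via the triangle inequality, Poincar\'e (using $\uu_s-\h=0$ on $\p B_r$), and the trivial estimate $\fint_{B_{\rho r}}|f|\le\rho^{-n}\fint_{B_r}|f|$, one obtains
$$\fint_{B_{\rho r}}|\D(\uu_s-\pp^{\rho r})|^2\lesssim\bigl(\rho^2+\rho^{-n}s^\al r^\al L_s^{4/(1+q)-2}\bigr)L_s^2(\rho r)^{2\ka-2}.$$
Choosing first $\rho$ small universal so that $C\rho^2\le 1/4$, and then $r_0$ small universal so that the second factor is $\le 1/4$, we obtain the improved gradient bound at scale $\rho r$. The $L^2$ estimate follows from Poincar\'e applied to $\uu_s-\pp^{\rho r}$ after possibly modifying its constant part. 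For the $L^\infty$ bound \eqref{eq:induc-hypo-2} on $\pp^{\rho r}$, I would use $\|\pp^{\rho r}\|_{L^\infty(B_{\rho r})}\le\|\uu_s\|_{L^\infty(B_{\rho r})}+\|\uu_s-\pp^{\rho r}\|_{L^\infty(B_{\rho r})}$, combining the weak growth estimate $\|\uu_s\|_{L^\infty(B_{\rho r})}\lesssim(\rho r)^\mu$ from Lemma~\ref{lem:weak-growth-est-ka-integer} with the just-proved $L^2$ estimate converted to $L^\infty$ via the finite-dimensional norm equivalence for polynomials of degree $\le\ka$; both contributions fit within $C_0L_s^{2/(1+q)}(\rho r)^\ka$ provided $\rho r$ is small relative to $s^{2/(1+q)}$, which is arranged by further shrinking $r_0$.

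The main obstacle is that $4/(1+q)>2$ for $q<1$, so the power of $L_s$ in the harmonic-replacement error of Step~1 exceeds the target $L_s^2$. Because $s$, $L_s$ and $\mu$ are universal constants, this is harmless: one absorbs the discrepancy by taking $r_0$ small depending on these universals, without affecting the universal choice of $\rho$, which is dictated solely by the harmonic decay exponent $\rho^{2\ka}$ in Step~2.
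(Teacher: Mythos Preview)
Your iteration for the gradient estimate \eqref{eq:induc-hypo-1} is essentially the right Campanato mechanism, and close in spirit to what the paper does in its Steps~3--4. The real difficulty, however, is the propagation of the $L^\infty$ bound \eqref{eq:induc-hypo-2}, and here your argument breaks down.

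You propose to bound $\|\pp^{\rho r}\|_{L^\infty(B_{\rho r})}$ via $\|\uu_s\|_{L^\infty(B_{\rho r})}+\|\uu_s-\pp^{\rho r}\|_{L^\infty(B_{\rho r})}$. For the first term, the weak growth estimate gives only $\|\uu_s\|_{L^\infty(B_{\rho r})}\le CL_s(\rho r)^\mu$ with $\mu<\ka$. The required bound is $C_0L_s^{2/(1+q)}(\rho r)^\ka$, and since $(\rho r)^{\mu-\ka}\to\infty$ as $r\to 0$, no fixed choice of $C_0$ or $L_s$ can absorb this. Your remedy of ``further shrinking $r_0$'' goes the wrong way: the inequality $L_s(\rho r)^\mu\le C_0L_s^{2/(1+q)}(\rho r)^\ka$ forces $\rho r$ to be bounded \emph{below}, which is incompatible with iterating $r\to 0$. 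For the second term, the ``finite-dimensional norm equivalence for polynomials'' does not apply, since $\uu_s-\pp^{\rho r}$ is not a polynomial; only an $L^2$ bound is available and there is no way to upgrade it.

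This is precisely why the paper's proof is so much more elaborate. The bound \eqref{eq:induc-hypo-2} is obtained not from the weak growth of $\uu_s$ but from the Weiss-type monotonicity formula: in the rescaled picture $\tilde\vv(x)=\uu_s(rx)/r^\ka$, monotonicity gives a bound on $\int_{B_1}|\tilde\vv|^{q+1}$ that is \emph{uniform in $r$}, and this forces $\|\tilde\pp^r\|_\infty^{q+1}\le CL_s^2$ whenever $\tilde\pp^r$ is $\ka$-homogeneous (Step~1 of the paper). The iteration must then also control the inhomogeneous part of $\pp^{\rho r}$, for which the monotonicity argument gives nothing directly; the paper handles this via the dichotomy in Step~5 (Case~2.1 versus Case~2.2), showing by a further iteration and a contradiction argument that the inhomogeneous part must stay small. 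Without an analogue of the Weiss bound, your scheme cannot close on \eqref{eq:induc-hypo-2}.
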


\begin{remark}\label{rmk:opt-growth-integer}
Lemma~\ref{lem:v-p-opt-growth-integer} readily implies Theorem~\ref{thm:opt-growth} when $\ka\in\mathbb N$. In fact, as in the standard Campanato Type estimates, the lemma ensures that \eqref{eq:induc-hypo-1}-\eqref{eq:induc-hypo-2} are true for small $r\le r_1$. Combining these two estimates yields $$
\fint_{B_r}|\uu_s|^2\le CL_s^{\frac4{1+q}}r^{2\ka},\quad\fint_{B_r}|\D\uu_s|^2\le CL_s^{\frac4{1+q}}r^{2\ka}.
$$
Scaling back to $\uu(x)=s^\ka\uu_s(x/s)$ gives its optimal growth estimates at $0\in \Gamma^\ka(\uu)\cap B_{1/2}$. This also holds for any $x_0\in \Gamma^\ka(\uu)\cap B_{1/2}$ by considering $\uu(\cdot-x_0)$.
\end{remark}

\begin{proof}
For notational simplicity, we write $\vv:=\uu_s$.

\medskip\noindent\emph{Step 1.} For $0<r<1$, we denote by $\tilde\vv$ and $\tilde\pp^r$ the rescalings of $\vv$ and $\pp^r$, respectively, to the ball of radius $r$, that is $$
\tilde\vv(x):=\frac{\vv(rx)}{r^\ka} = \frac{\uu(srx)}{(s r)^\ka} ,\qquad \tilde\pp^r(x):=\frac{\pp^r(rx)}{r^\ka},\quad x\in B_1.
$$
When not specified, $\|\cdot\|_\infty$ denotes the $L^\infty$ norm in the unit ball $B_1$. With these notations, \eqref{eq:induc-hypo-1}-\eqref{eq:induc-hypo-2} read \begin{align}
    \label{eq:induc-hypo-rescal-1}
    \begin{split}
        &\fint_{B_1}|\tilde\vv-\tilde\pp^r|^2\le L_s^2,\\
        &\fint_{B_1}|\D(\tilde\vv-\tilde\pp^r)|^2\le L_s^2
    \end{split}
\end{align}
and \begin{align}
    \label{eq:induc-hypo-rescal-2}
    \|\tilde\pp^r\|_\infty\le C_0L_s^{\frac2{1+q}}.
\end{align}

We claim that if $\tilde\pp^r$ is $\ka$-homogeneous, then the finer bound $$
\|\tilde\pp^r\|_\infty\le \frac{C_0}2L_s^{\frac2{1+q}}
$$
holds for a universal constant $C_0>0$.
Indeed, applying Theorem~\ref{thm:Weiss}, Weiss-type monotonicity formula, gives $$
\int_{B_1}\left(|\D\tilde\vv|^2+2F(0,\tilde\vv)\right)-\ka(1-b(rs)^\al)\int_{\p B_1}|\tilde\vv|^2\le e^{-a(rs)^\al}W(\uu,0,0,t_0)
$$
and since $b>0$, $$
\frac{2\la_0}{1+q}\int_{B_1}|\tilde\vv|^{q+1}\le C-\left(\int_{B_1}|\D\tilde\vv|^2-\ka\int_{\p B_1}|\tilde\vv|^2\right).
$$
Using that $\tilde\pp^r$ is a $\ka$-homogeneous harmonic polynomial satisfying \eqref{eq:induc-hypo-rescal-1}, we get \begin{align*}
    \frac{2\la_0}{1+q}\int_{B_1}|\tilde\vv|^{q+1}&\le C-\left(\int_{B_1}|\D(\tilde\vv-\tilde\pp^r)|^2-\ka\int_{\p B_1}|\tilde\vv-\tilde\pp^r|^2\right)\\
    &\le C(1+L_s^2).
\end{align*}
Thus \begin{align*}
    \int_{B_1}|\tilde\pp^r|^{q+1}&\le C\int_{B_1}\left(|\tilde\vv|^{q+1}+|\tilde\vv-\tilde\pp^r|^{q+1}\right)\\
    &\le C\left(1+L_s^2+\|\tilde\vv-\tilde\pp^r\|_{L^2(B_1)}^{q+1}\right)
    \le C(1+L_s^2+L_s^{q+1}).
\end{align*}
In conclusion (for a universal constant $C>0$) we have   $
\|\tilde\pp^r\|_\infty^{q+1}\le CL_s^2$ from which we deduce that $$
\|\tilde\pp^r\|_\infty\le \frac{C_0}2L_s^{\frac{2}{1+q}}.
$$

\medskip\noindent\emph{Step 2.} We claim that for some $t_0>0$ small universal \begin{align}
    \label{eq:v-rescal-est}
    |\D\tilde\vv(x)|\le C_\mu L_s^{\frac2{1+q}}|x|^{\mu-1},\,\, |\tilde\vv(x)|\le C_\mu L_s^{\frac2{1+q}}|x|^\mu,\quad x\in B_{t_0}.
\end{align}
Indeed, \eqref{eq:induc-hypo-rescal-1}-\eqref{eq:induc-hypo-rescal-2} give ($C$ universal possibly changing from equation to equation) \begin{align*}
    \fint_{B_1}|\D\tilde\vv|^2\le 2\fint_{B_1}\left(|\D(\tilde\vv-\tilde\pp^r)|^2+|\D\tilde\pp^r|^2\right)\le C(L_s^2+L_s^{\frac4{1+q}})\le CL_s^{\frac4{1+q}}.
\end{align*} Similarly, $$
\fint_{B_1}|\tilde\vv|^2\le CL_s^{\frac4{1+q}},
$$
and H\"oder's inequality gives $$
\fint_{B_1}|\tilde\vv|^{1+q}\le CL_s^2.
$$
We conclude that the following  energy estimate
$E( \tilde\ww,1)\le C,
$
where  $\tilde\ww:=L_s^{-\frac2{1+q}}\tilde\vv$ is an almost minimizer with the same gauge function as $\tilde\vv$ for the energy $$
\int_{B_t(x_0)}\left(|\D\tilde\ww|^2+2L_s^{\frac{2(q-1)}{q+1}}F(x_0,\tilde\ww)\right),\quad0<t<1.
$$
As before, $L_s^{\frac{2(q-1)}{q+1}}\le1$ allows us to repeat the arguments towards the $C^{1,\al/2}$-estimtate of almost minimizers as well as towards Lemma~\ref{lem:weak-growth-est-ka-integer}. Since $\uu=o(|x|^{\ka-1})$ implies $\tilde\ww=o(|x|^{\ka-1})$, we can apply Lemma~\ref{lem:weak-growth-est-ka-integer} to have $$
|\tilde\ww(x)|\le C_\mu|x|^\mu,\,\, |\D\tilde\ww(x)|\le C_\mu|x|^{\mu-1},\quad x\in B_{t_0}.
$$
This readily implies \eqref{eq:v-rescal-est}.

\medskip\noindent\emph{Step 3.} Let $\tilde\h$ be the harmonic replacement of $\tilde\vv$ in $B_{t_0}$. Then, we claim that \begin{align}
    \label{eq:v-h-rescal-diff-est}
    \fint_{B_{t_0}}|\D(\tilde\vv-\tilde\h)|^2\le CL_s^{1+\frac{2q}{1+q}}.
\end{align}
Let us first recall that $\vv(x)=\frac{\uu(sx)}{s^\ka}$ and that $\tilde\vv(x)=\frac{\vv(rx)}{r^\ka}=\frac{\uu(rsx)}{(rs)^\ka}$ is an almost minimizer with gauge function $\omega(\rho)\le M(rs\rho)^\al$. Thus,
 \begin{align*}
    \fint_{B_{t_0}}|\D(\tilde\vv-\tilde\h)|^2&\le M(rs)^\al\fint_{B_{t_0}}(|\D\tilde\h|^2+2F(\tilde\h))+2\fint_{B_{t_0}}(F(0,\tilde\h)-F(0,\tilde\vv))\\
    &=:I+II.
\end{align*}
To estimate $I$, we use that $\tilde\h$ is the harmonic replacement of $\tilde\vv$, together with \eqref{eq:v-rescal-est}, to get $$
\fint_{B_{t_0}}|\D\tilde\h|^2\le\fint_{B_{t_0}}|\D\tilde\vv|^2\le CL_s^{\frac4{1+q}}.
$$
In addition, it follows from the subharmonicity of $|\tilde\h|^2$  and \eqref{eq:v-rescal-est} that $$
\|\tilde\h\|_{L^\infty(B_{t_0})}=\|\tilde\h\|_{L^\infty(\p B_{t_0})}=\|\tilde\vv\|_{L^\infty(\p B_{t_0})}\le CL_s^{\frac2{1+q}}.
$$
This gives $$
\fint_{B_{t_0}}F(0,\tilde\h)\le C\|\tilde\h\|_{L^\infty(B_{t_0})}^{1+q}\le CL_s^2\le CL_s^{\frac4{1+q}}.
$$
Therefore, $$
I\le Cs^\al L_s^{\frac4{1+q}}=CL_s^{\frac\al{\mu-\ka}+\frac4{1+q}}\le CL_s^{1+\frac{2q}{1+q}},
$$
where the last inequality holds if $\mu$ is chosen universal close enough to $\ka$ (specifically, $\mu\ge\ka-\frac{\al(1+q)}{3(1-q)})$.

Next, we estimate $II$. \begin{align*}
    \fint_{B_{t_0}}|F(0,\tilde\h)-F(0,\tilde\vv)|&\le C\fint_{B_{t_0}}\left||\tilde\h|^{1+q}-|\tilde\vv|^{1+q}\right|\\
    &\le C\fint_{B_{t_0}}\left(|\tilde\h|^q+|\tilde\vv|^q\right)|\tilde\vv-\tilde\h|\\
    &\le C\left(\|\tilde\h\|_{L^\infty(B_{t_0})}^q+\|\tilde\vv\|_{L^\infty(B_{t_0})}^q\right)\left(\fint_{B_{t_0}}|\tilde\vv-\tilde\h|^2\right)^{1/2}\\
    &\le CL_s^{\frac{2q}{1+q}}\left(\fint_{B_{t_0}}|\D(\tilde\vv-\tilde\h)|^2\right)^{1/2}.
\end{align*}
To bound the last term, we observe \begin{align*}
    \int_{B_{t_0}}\D(\tilde\h-\tilde\pp^r)\cdot\D(\tilde\h-\tilde\vv)&=\int_{\p B_{t_0}}\p_\nu(\tilde\h-\tilde\pp^r)(\tilde\h-\tilde\vv)-\int_{B_{t_0}}\Delta(\tilde\h-\tilde\pp^r)(\tilde\h-\tilde\vv)\\
    &=0,
\end{align*}
and use it to obtain \begin{align*}
    \int_{B_{t_0}}|\D(\tilde\vv-\tilde\pp^r)|^2-\int_{B_{t_0}}|\D(\tilde\vv-\tilde\h)|^2&=\int_{B_{t_0}}|\D\tilde\pp^r|^2-|\D\tilde\h|^2-2\D\tilde\vv\cdot\D(\tilde\pp^r-\tilde\h)\\
    &=\int_{B_{t_0}}\D(\tilde\pp^r-\tilde\h)\cdot\D(\tilde\pp^r+\tilde\h-2\tilde\vv)\\
    &=\int_{B_{t_0}}\D(\tilde\pp^r-\tilde\h)\cdot\D(\tilde\pp^r-\tilde\h)\\
    &=\int_{B_{t_0}}|\D(\tilde\pp^r-\tilde\h)|^2\ge 0.
\end{align*}
Therefore, \begin{align*}
    II&\le 2\fint_{B_{t_0}}|F(0,\tilde\h)-F(0,\tilde\vv)|\le CL_s^{\frac{2q}{1+q}}\left(\fint_{B_{t_0}}|\D(\tilde\vv-\tilde\h)|^2\right)^{1/2}\\
    &\le CL_s^{\frac{2q}{1+q}}\left(\fint_{B_{t_0}}|\D(\tilde\vv-\tilde\pp^r)|^2\right)^{1/2}\le CL_s^{1+\frac{2q}{1+q}},
\end{align*}
where we used \eqref{eq:induc-hypo-rescal-1} in the last inequality. This completes the proof of \eqref{eq:v-h-rescal-diff-est}.

\medskip\noindent\emph{Step 4.} For $\rho\in(0,t_0)$ small to be chosen below, we have by \eqref{eq:v-h-rescal-diff-est} $$
\fint_{B_\rho}|\D(\tilde\vv-\tilde\h)|^2\le C\rho^{-n}L_s^{1+\frac{2q}{1+q}}.
$$
Since $\tilde\h-\tilde\pp^r$ is harmonic, arguing as in the proof of Lemma~\ref{lem:alm-min-decay}, we can find a harmonic polynomial $\qq^r$ (in $B_r$) of degree $\kappa$  such that $\tilde \qq^r (x)=\frac{\qq^r (rx)}{r^\kappa} $ satisfies
$$
\fint_{B_\rho}|\D(\tilde\h-\tilde\pp^r-\tilde\qq^r)|^2\le C\rho^{2\ka}\fint_{B_{t_0}}|\D(\tilde\h-\tilde\pp^r)|^2.
$$
Using \eqref{eq:induc-hypo-rescal-1} and \eqref{eq:v-h-rescal-diff-est}, we further have \begin{align*}
    \fint_{B_\rho}|\D(\tilde\h-\tilde\pp^r-\tilde\qq^r)|^2&\le C\rho^{2\ka}\left(\fint_{B_{t_0}}|\D(\tilde\h-\tilde\vv)|^2+\fint_{B_{t_0}}|\D(\tilde\vv-\tilde\pp^r)|^2\right)\\
    &\le C\rho^{2\ka}(L_s^{1+\frac{2q}{1+q}}+L_s^2)\\
    &\le CL_s^2\rho^{2\ka}.
\end{align*}
This, combined with the equation above, gives
\begin{align*}
    \fint_{B_\rho}|\D(\tilde\vv-\tilde\pp^r-\tilde\qq^r)|^2&\le 2\fint_{B_\rho}(|\D(\tilde\h-\tilde\pp^r-\tilde\qq^r)|^2+|\D(\tilde\vv-\tilde\h)|^2)\\
    &\le L_s^2\rho^{2\ka-2}(C\rho^2+C\rho^{-n-2\ka+2}L_s^{\frac{q-1}{1+q}}).
\end{align*}
By possibly modifying $\tilde\qq^r$ by adding a constant, we also have by Poincar\'e inequality \begin{align*}
    \fint_{B_\rho}|\tilde\vv-\tilde\pp^r-\tilde\qq^r|^2&\le C\rho^2\fint_{B_\rho}|\D(\tilde\vv-\tilde\pp^r-\tilde\qq^r)|^2\\
    &\le L_s^2\rho^{2\ka}(C_1\rho^2+C_1\rho^{-n-2\ka+2}L_s^{\frac{q-1}{1+q}}).
\end{align*}
One can see that $\tilde\qq^r$ depends on $\rho$ as well as $r$, but we keep denoting $\tilde\qq^r$ for the notational simplicity. We choose $\rho\in(0,t_0)$ small so that $$
C_1\rho^2\le 1/8
$$
and then choose $L_s$ large (that is $s$ small) so that $$
C_1\rho^{-n-2\ka+2}L_s^{\frac{q-1}{1+q}}\le 1/8.
$$
This yields that \begin{align}
    \label{eq:u-p-q-rescal-est}
    \begin{split}
        &\fint_{B_\rho}|\tilde\vv-\tilde\pp^r-\tilde\qq^r|^2\le \frac14L_s^2\rho^{2\ka},\\
        &\fint_{B_\rho}|\D(\tilde\vv-\tilde\pp^r-\tilde\qq^r)|^2\le \frac14L_s^2\rho^{2\ka-2}.
    \end{split}
\end{align}
Notice that \eqref{eq:u-p-q-rescal-est} holds for any $\rho\in[\rho_1,\rho_2]$, with some constants $\rho_1$, $\rho_2>0$ small universal and $L_s>0$ large universal.

In addition, we have \begin{align}\label{eq:q-rescal-est}\begin{split}
    \|\tilde\qq^r\|_{L^\infty(B_1)}&\le C(\rho_2,\ka,n)\|\tilde\qq^r\|_{L^\infty(B_{\rho_2/2})}\le C\left(\fint_{B_{\rho_2}}|\tilde\qq^r|^2\right)^{1/2}\\
    &\le C\left(\fint_{B_{\rho_2}}|\tilde\vv-\tilde\pp^r-\tilde\qq^r|^2+\fint_{B_{\rho_2}}|\tilde\vv-\tilde\pp^r|^2\right)^{1/2}\\
    &\le \bar{C}L_s,
\end{split}\end{align}
where the last line follows from \eqref{eq:induc-hypo-rescal-1} and \eqref{eq:u-p-q-rescal-est}. We remark that $\bar C$ depends on $\rho_2$, but is independent of $\rho_1$.

\medskip\noindent\emph{Step 5.} In this step, we prove that the estimates \eqref{eq:induc-hypo-1}-\eqref{eq:induc-hypo-2} over $B_r$ imply the same estimates   over $B_{\rho r}$. 
We set (by abuse of notation)
$\pp^{\rho r}:=\pp^r+\qq^r$ and recall $\qq^r(x):=r^\ka\tilde\qq^r\left(\frac{x}{r}\right)$.
Following the notations above we denote its homogeneous rescaling by $$
\tilde\pp^{\rho r}(x):=\frac{\pp^{\rho r}(\rho rx)}{(\rho r)^\ka}=\frac{(\tilde\pp^r+\tilde\qq^r)(\rho x)}{\rho^\ka}.
$$
We divide the proof into the following two cases: $$
\text{either }\pp^{\rho r}\text{ is homogeneous of degree }\ka\text{ or not}.
$$

\noindent\emph{Case 1.} Suppose that $\pp^{\rho r}$ is $\ka$-homogeneous. Then \eqref{eq:induc-hypo-1} over $B_{\rho r}$ follows from \eqref{eq:u-p-q-rescal-est} and \eqref{eq:induc-hypo-2} over  $B_{\rho r}$ with $\pp^{\rho r}$ from the monotonicity formula, see the claim in \emph{Step 1}.

\medskip\noindent\emph{Case 2.} Now we assume that $\pp^{\rho r}$ is not homogeneous of degree $\ka$. Note that for each polynomial $\pp$ of degree $\ka$, we can decompose $\pp=\pp_h+\pp_i$
 with $\pp_h$, $\pp_i$ respectively the $\ka$-homogeneous and the inhomogeneous parts of $\pp$. We will prove that \eqref{eq:induc-hypo-1}-\eqref{eq:induc-hypo-2} hold in $B_{\rho r}$ with the harmonic polynomial $\pp^{\rho r}_h$ (in the place of $\pp^{\rho r}$). In fact, it is enough to prove the statement in $B_{\rho r}$ under the assumption that $\pp^r$ is $\ka$-homogeneous. Indeed, we note that \eqref{eq:u-p-q-rescal-est} holds for every any $r\le1$ and $\rho_1<\rho<\rho_2$. Thus, if $r\le r_0$ is small enough, we can find $\rho\in[\rho_1,\rho_2]$ such that $r=\rho^m$ for some $m\in\mathbb N$. Then, we can iterate the above statement with such $\rho$, starting with $\pp^1=0$.

Now, we distinguish two subcases, for $\de>0$ small and $L_s>0$ large universal: $$
\|\tilde\qq^r_i\|_\infty\le \de L_s\quad\text{or}\quad\|\tilde\qq^r_i\|_\infty>\de L_s.
$$

\noindent\emph{Case 2.1.} We first consider the case $\|\tilde\qq^r_i\|_\infty\le \de L_s$. To prove \eqref{eq:induc-hypo-1}, we use the $\ka$-homogeneity of $\pp^r$ to have $$
\pp^{\rho r}_i=\pp^r_i+\qq^r_i=\qq^r_i,
$$
which implies (in accordance with the decomposition above) 
$$
\pp^{\rho r}_h=\pp^{\rho r}-\pp^{\rho r}_i=\pp^{\rho r}-\qq^r_i.
$$
Combining this with \eqref{eq:u-p-q-rescal-est} gives
\begin{align*}
    \fint_{B_{\rho r}}|\vv-\pp^{\rho r}_h|^2&\le 2\fint_{B_{\rho r}}|\vv-\pp^{\rho r}|^2+2\fint_{B_{\rho r}}|\qq^r_i|^2\le 2\left(\frac{L_s^2}4(\rho r)^{2\ka}\right)+2r^{2\ka}\fint_{B_\rho}|\tilde\qq^r_i|^2\\
    &\le \frac{L_s^2}2(\rho r)^{2\ka}+2L_s^2\de^2r^{2\ka}\le L_s^2(\rho r)^{2\ka},\quad\text{if }\de\le\frac{\rho_1^\ka}2.
\end{align*}
Similarly, \begin{align*}
    &\fint_{B_{\rho r}}|\D(\vv-\pp^{\rho r}_h)|^2\\
    &\qquad\le 2\fint_{B_{\rho r}}|\D(\vv-\pp^{\rho r})|^2+2\fint_{B_{\rho r}}|\D\qq^r_i|^2\le \frac{L_s^2}2(\rho r)^{2\ka-2}+2r^{2\ka-2}\fint_{B_\rho}|\D\tilde\qq^r_i|^2\\
    &\qquad\le \frac{L_s^2}2(\rho r)^{2\ka-2}+C_2r^{2\ka-2}\|\tilde\qq^r_i||_{L^\infty(B_1)}^2\le \frac{L_s^2}2(\rho r)^{2\ka-2}+C_2\de^2L_s^2r^{2\ka-2}\\
    &\qquad\le L_s^2(\rho r)^{2\ka-2},\quad\text{if }\de\le\frac{\rho_1^{\ka-1}}{2C_2^{1/2}}.
\end{align*}
This proves \eqref{eq:induc-hypo-1} in $B_{\rho r}$ with harmonic polynomial $\pp_h^{\rho r}$. \eqref{eq:induc-hypo-2} follows from the homogeneity of $\pp^{\rho r}_h$.

\medskip\noindent\emph{Case 2.2.} Now we assume $\|\tilde\qq^r_i\|_\infty>\de L_s$. We will show that it leads to a contradiction and that we always fall in the previous case.

Indeed, for $\bar r:=\rho_1 r$, \begin{align*}
    \|\tilde\pp_i^{\bar r}\|_\infty=\left\|\frac{\tilde\qq^r_i(\rho_1x)}{\rho_1^\ka}\right\|_\infty=\frac{\|\tilde\qq^r_i\|_{L^\infty(B_{\rho_1})}}{\rho_1^\ka}\ge\frac{\|\tilde\qq^r_i\|_{L^\infty(B_1)}}{\rho_1}\ge\frac\de{\rho_1}L_s.
\end{align*}
Recall that the constant $\bar C$ in \eqref{eq:q-rescal-est} is independent of $\rho_1$. Thus, for $\rho_1>0$ small, $$
\|\tilde\pp^{\bar r}_i\|_\infty\ge C_3L_s,\quad C_3\ge 2\bar C.
$$
Similarly, $$
\|\tilde\pp^{\bar r}_i\|_\infty=\frac{\|\tilde\qq^r_i\|_{L^\infty(B_{\rho_1})}}{\rho_1^\ka}\le \frac{\|\tilde\qq^r\|_{L^\infty(B_{1})}}{\rho_1^\ka}\le C_4(\rho_1)L_s.
$$
For $L_s$ large \begin{align*}
    \|\tilde\pp^{\bar r}_h\|_\infty=\|\tilde\pp^r+\tilde\qq^r_h\|_\infty\le C_0L_s^{\frac2{1+q}}+\bar CL_s\le \frac34C_0L_s^{\frac2{1+q}},
\end{align*}
and thus \begin{align*}
    \|\tilde\pp^{\bar r}\|_\infty\le \|\tilde\pp^{\bar r}_h\|_\infty+\|\tilde\pp^{\bar r}_i\|_\infty\le \frac34C_0L_s^{\frac2{1+q}}+C_4L_s\le \frac78C_0L_s^{\frac2{1+q}}.
\end{align*}
We iterate again, and conclude that $$
\frac{\rho_1^{-1}}2\|\tilde\pp^{\bar r}_i\|_\infty\le\|\tilde\pp^{\rho_1\bar r}_i\|_\infty\le 2\rho_1^{-\ka}\|\tilde\pp^{\bar r}_i\|_\infty
$$
while $$
\|\tilde\pp^{\rho_1\bar r}_h-\tilde\pp^{\bar r}_h\|_\infty\le \bar CL_s.
$$
Indeed, using that $\|\tilde\pp^{\bar r}_i\|_\infty\ge C_3L_s\ge 2\|\tilde\qq^{\bar r}\|_\infty$, we get \begin{align*}
    \|\tilde\pp^{\rho_1\bar r}_i\|_\infty&=\frac{\|\tilde\pp^{\bar r}_i+\tilde\qq^{\bar r}_i\|_{L^\infty(B_{\rho_1})}}{\rho_1^\ka}\ge\frac{\|\tilde\pp^{\bar r}_i+\tilde\qq^{\bar r}_i\|_{L^\infty(B_1)}}{\rho_1}\ge \frac{\|\tilde\pp^{\bar r}_i\|_\infty-\|\tilde\qq^{\bar r}\|_\infty}{\rho_1}\\
    &\ge\frac{\rho_1^{-1}}2\|\tilde\pp^{\bar r}_i\|_\infty,
\end{align*}
and similarly $$
 \|\tilde\pp^{\rho_1\bar r}_i\|_\infty\le\frac{\|\tilde\pp^{\bar r}_i\|_\infty+\|\tilde\qq^{\bar r}\|_\infty}{\rho_1^\ka}\le 2\rho_1^{-\ka}\|\tilde\pp^{\bar r}_i\|_\infty.
$$
In addition, $$
\|\tilde\pp^{\rho_1\bar r}_h-\tilde\pp^{\bar r}_h\|_\infty=\|\tilde\qq^{\bar r}_h\|_\infty\le \bar CL_s.
$$
In conclusion, if $a_l:=\|\tilde\pp^{\rho_1^l\bar r}_i\|_\infty$ and $b_l:=\|\tilde\pp^{\rho_1^l\bar r}_h\|_\infty$, $l\ge0$, we can iterate and have that ($\rho_1$ small) \begin{align*}
    &C_3L_s\le a_0\le C_4L_s,\quad 2a_l\le a_{l+1}\le C^*(\rho_1)a_l,\\
    &b_0\le\frac34C_0L_s^{\frac2{1+q}},\quad |b_{l+1}-b_l|\le \bar CL_s,
\end{align*}
as long as $$
a_l+b_l\le C_0L_s^{\frac2{1+q}},
$$
which holds at $l=0$. This iteration is possible, since we can repeat \emph{Step 1--4}  as long as $\|\tilde\pp^{\rho_1^l\bar r}\|_\infty\le a_l+b_l\le C_0L_s^{\frac2{1+q}}$.

Thus we can iterate till the first $l\ge 1$ ($l \sim \log L_s)$ such that for $c_0\le\frac1{2C^*}$ small universal \begin{align}
    \label{eq:a_l-bound}\frac{C_0}8L_s^{\frac2{1+q}}\ge a_l\ge c_0L_s^{\frac2{1+q}},\\
    \label{eq:b_l-bound}b_l\le\frac78C_0L_s^{\frac2{1+q}}.
\end{align}
We will now finally show that these inequalities lead to a contradiction.

For simplicity we write $\tilde\vv^l:=\tilde\vv^{\rho_1^l\bar r}$ and $\tilde\pp^l:=\tilde\pp^{\rho_1^l\bar r}$. From $$
\fint_{B_1}|\tilde\vv^l-\tilde\pp^l|^2\le L_s^2,
$$
we see that for any $\eta \leq 1$, 
$$L_s^{-\frac{2}{1+q}} \left(\fint_{B_\eta} |\tilde \vv^l - \tilde \pp^l|^2\right)^{1/2} \to 0, \quad L_s \to \infty.$$
Moreover, from \eqref{eq:v-rescal-est} and \eqref{eq:b_l-bound}, for $\mu> \kappa-1,$ and $|x| \leq 1/2,$
$$L_s^{-\frac{2}{1+q}}|\tilde\vv^l - \tilde \pp_h^l|(x) \leq C |x|^\mu + \frac78C_0 |x|^\kappa.$$ This combined with the equation above gives that 
$$L_s^{-\frac{2}{1+q}} \left(\fint_{B_\eta}  |\tilde \pp_i^l|^{2}\right)^{1/2} \leq C(\eta^\mu + \eta^\kappa) + o(1/L_s).$$
Hence, we conclude from \eqref{eq:a_l-bound} that 
$$c_0 \eta^{\kappa-1} \leq L_s^{-\frac{2}{1+q}} \left(\fint_{B_\eta} |\tilde \pp_i^l|^{2}\right)^{1/2} \leq C(\eta^\mu + \eta^\kappa) + o(1/L_s).$$
Since $\mu> \kappa-1$, for $\eta$ small and $L_s$ large we obtain a contradiction.
\end{proof}

Before closing this section, we notice that the combination of Theorem~\ref{thm:grad-u-holder} and Theorem~\ref{thm:opt-growth} provides the following optimal regularity of an almost minimizer at the free boundary.

\begin{corollary}\label{UB} Let $\uu$ be an almost minimizer in $B_1$. Then, for $x_0\in \Gamma^\ka(\uu)\cap B_{1/2}$, $0<r<1/2$, $\uu_{x_0,r}\in C^{1,\al/2}(B_1)$ and for any $K\Subset B_1$, \begin{equation}\label{1a}
\|\uu_{x_0,r}\|_{C^{1,\al/2}(K)}\le C(n,\al,M, \ka, K,E(\uu,1)).
\end{equation}
\end{corollary}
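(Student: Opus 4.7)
The plan is to reduce the corollary to a direct application of Theorem~\ref{thm:grad-u-holder} by showing that the rescaled function $\vv := \uu_{x_0,r}$ is itself an almost minimizer in $B_1$ whose total energy $E(\vv,1)$ is uniformly bounded. Here we interpret $\uu_{x_0,r}(x) := \uu(x_0+rx)/r^\ka$, which is defined on $B_1$ since $|x_0|+r < 1$ whenever $x_0 \in B_{1/2}$ and $r<1/2$.

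\emph{Step 1: scaling the almost-minimizing inequality.} Given a competitor $\ww \in \vv + W_0^{1,2}(B_\rho(w))$ with $B_\rho(w)\Subset B_1$, set $\tilde\ww(y) := r^\ka \ww((y-x_0)/r)$, which is an admissible competitor for $\uu$ on $B_{r\rho}(x_0+rw) \Subset B_1$. The critical scaling identity
\[
\ka(q+1) \;=\; 2\ka-2
\]
(which holds because $\ka = 2/(1-q)$) guarantees that the gradient term and the potential term transform with the same power of $r$. A direct change of variables $y = x_0+rx$ therefore yields, from Definition~\ref{def:alm-min} for $\uu$,
\[
\int_{B_\rho(w)}\!\!\bigl(|\D\vv|^2 + 2\tilde F(x,\vv)\bigr)\,dx \;\le\; \bigl(1+M(r\rho)^\al\bigr)\int_{B_\rho(w)}\!\!\bigl(|\D\ww|^2 + 2\tilde F(x,\ww)\bigr)\,dx,
\]
where $\tilde F$ has coefficients $\tilde\la_\pm(x) := \la_\pm(x_0+rx)$. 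These coefficients still satisfy $\la_0 \le \tilde\la_\pm \le \la_1$ and $\|\tilde\la_\pm\|_{C^{0,\al}(B_1)} = r^\al \|\la_\pm\|_{C^{0,\al}} \le M$, so $\vv$ is an almost minimizer in $B_1$ satisfying \eqref{eq:M} with gauge $\omega_r(\rho) = M(r\rho)^\al \le M\rho^\al$.

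\emph{Step 2: uniform control of $E(\vv,1)$.} The same scaling identity gives
\[
E(\vv,1) \;=\; \frac{1}{r^{n+2\ka-2}} \int_{B_r(x_0)} \bigl(|\D\uu|^2 + |\uu|^{q+1}\bigr)\,dy.
\]
For $r<r_0$, where $r_0$ is the constant in Theorem~\ref{thm:opt-growth}, the optimal growth estimate bounds the right-hand side by a universal constant $C$. For $r_0\le r<1/2$, the factor $r^{-(n+2\ka-2)}$ is bounded by a universal constant, and the integral is at most $E(\uu,1)$; hence in either case $E(\vv,1) \le C(n,\al,M,\ka,E(\uu,1))$.

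\emph{Step 3: conclusion.} Applying Theorem~\ref{thm:grad-u-holder} to the almost minimizer $\vv$ in $B_1$ yields, for every $K\Subset B_1$,
\[
\|\vv\|_{C^{1,\al/2}(K)} \;\le\; C(n,\al,M,K)\bigl(E(\vv,1)^{1/2}+1\bigr) \;\le\; C(n,\al,M,\ka,K,E(\uu,1)),
\]
which is precisely \eqref{1a}. There is no serious obstacle here; the only point requiring care is verifying the scaling invariance in Step~1 (in particular the identity $\ka(q+1)=2\ka-2$ and the preservation of the H\"older bounds on the coefficients) and separating the cases $r<r_0$ and $r\ge r_0$ in Step~2 so that the uniform bound on $E(\vv,1)$ holds throughout the full range $0<r<1/2$ allowed by the corollary.
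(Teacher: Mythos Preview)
Your proof is correct and follows exactly the approach the paper intends: the corollary is stated immediately after Theorem~\ref{thm:opt-growth} with the remark that it is ``the combination of Theorem~\ref{thm:grad-u-holder} and Theorem~\ref{thm:opt-growth},'' and no further details are given. You have filled in precisely those details---verifying that $\uu_{x_0,r}$ is again an almost minimizer (the same rescaling computation appears in the proofs of Lemma~\ref{lem:alm-min-decay} and Lemma~\ref{lem:growth-est-ka-noninteger}) and that $E(\uu_{x_0,r},1)$ is uniformly bounded via the optimal growth estimate---so there is nothing to add. One cosmetic point: in Step~1 the identity $\|\tilde\la_\pm\|_{C^{0,\al}}=r^\al\|\la_\pm\|_{C^{0,\al}}$ holds only for the H\"older seminorm, not the full norm, but since $\sup|\tilde\la_\pm|\le\la_1\le M$ the conclusion that \eqref{eq:M} is preserved (up to an inessential factor) is unaffected.
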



\section{Non-degeneracy}\label{sec:nondeg}
In this section we shall derive an important non-degeneracy property of almost minimizers, Theorem~\ref{thm:nondeg}. 

In the rest of this paper, for $x_0\in B_{1/2}$ and $0<r<1/2$ we denote the $\ka$-homogeneous rescalings of $\uu$ by $$
\uu_{x_0,r}(x):=\frac{\uu(rx+x_0)}{r^\ka},\quad x\in B_{1/(2r)}.
$$

\begin{theorem}[Non-Degeneracy]\label{thm:nondeg}Let $\uu$ be an almost minimizer in $B_1$. There exist constants $c_0=c_0(q,n,\alpha,M, E(\uu, 1))>0$ and $r_0=r_0(q,n,\alpha,M)>0$ such that if $x_0\in\Gamma^\kappa(\uu)\cap B_{1/2}$ and $0<r<r_0$, then $$
\sup_{B_{r}(x_0)}|\uu|\ge c_0r^\kappa.
$$

\end{theorem}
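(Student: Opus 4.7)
The strategy is to argue by contradiction, combining a harmonic-type competitor with the Weiss monotonicity formula (Theorem~\ref{thm:Weiss}). After translating we assume $x_0=0$; by continuity of $\uu$ (Theorem~\ref{thm:grad-u-holder}) and $0\in\Gamma(\uu)$, $\uu(0)=0$. Suppose non-degeneracy fails: there exist $r_k\searrow 0$ with $s_k:=\sup_{B_{r_k}}|\uu|\le\e_k r_k^{\ka}$ and $\e_k\to 0$.

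For each $k$ I use the competitor $\vv_k$ equal to $\uu$ on $\p B_{r_k}$, identically zero on $B_{r_k/2}$, and harmonic on the annulus $B_{r_k}\setminus B_{r_k/2}$ matching these boundary values; then $\vv_k\in \uu+W^{1,2}_0(B_{r_k})$. The maximum principle gives $|\vv_k|\le s_k$, and a standard estimate on the annulus (of comparable inner and outer radii) yields
\[
\int_{B_{r_k}}|\D\vv_k|^2\le C s_k^2 r_k^{n-2},\qquad \int_{B_{r_k}} F(x,\vv_k)\,dx\le C s_k^{q+1}r_k^n.
\]
The critical algebraic identity $(q+1)\ka=2\ka-2$, equivalent to $\ka=2/(1-q)$, makes both bounds scale as $r_k^{n+2\ka-2}$; for $\e_k<1$ the $F$-term dominates so both are $\le C\e_k^{q+1}r_k^{n+2\ka-2}$. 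Plugging $\vv_k$ into the almost-minimality inequality \eqref{eq:alm-min} and using $\omega(r)\to 0$,
\[
\int_{B_{r_k}}\bigl(|\D\uu|^2+2F(x,\uu)\bigr)\,dx\le C\e_k^{q+1}r_k^{n+2\ka-2}.
\]

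Next, the boundary term of the Weiss functional satisfies $\tfrac{1}{r_k}\int_{\p B_{r_k}}|\uu|^2\le C s_k^2 r_k^{n-2}\le C\e_k^2 r_k^{n+2\ka-2}$. Using the H\"older continuity of $\la_\pm$ via \eqref{eq:lambda-ratio} to replace $F(x,\uu)$ by $F(0,\uu)$ at negligible cost, we deduce $|W(\uu,0,0,r_k)|\le C\e_k^{q+1}\to 0$. Monotonicity (Theorem~\ref{thm:Weiss}) then gives $W(\uu,0,0,0^+)\le W(\uu,0,0,r_k)$ for every $k$, so $W(\uu,0,0,0^+)=0$.

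The main obstacle is to derive a contradiction from $W(\uu,0,0,0^+)=0$ together with $0\in\Gamma(\uu)$. By Corollary~\ref{UB} the $\ka$-rescalings $\uu_{0,\rho}$ are precompact in $C^{1,\gamma}_{\loc}$; extracting a convergent subsequence along $\rho_k\to 0^+$ produces a blow-up $\uu_0$. The saturation case in the Weiss monotonicity formula forces $\uu_0$ to be $\ka$-homogeneous with $W(\uu_0,0,0,1)=0$, which on a $\ka$-homogeneous function reduces to an identity implying $\uu_0\equiv 0$. A quantitative compactness argument exploiting that $0$ persists as a free-boundary point of each $\uu_{0,\rho_k}$, while $\uu_0\equiv 0$, then yields the contradiction. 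The delicate aspect is to execute this blow-up analysis within the present section without circularly invoking the refined blow-up results of Section~\ref{sec:blowup}, which themselves rely on the non-degeneracy we are proving; this likely requires a direct quantitative iteration (a ``kill-off'' mechanism) instead of appealing to the abstract blow-up theory.
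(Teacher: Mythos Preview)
Your proposal has two genuine gaps.

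First, the estimate $\int_{B_{r_k}}|\D\vv_k|^2\le C s_k^2 r_k^{n-2}$ for the harmonic extension on the annulus is false: an $L^\infty$ bound on the boundary data does not control the Dirichlet energy (take boundary data $\sin(N\theta)$ on the unit annulus in $\R^2$; the energy grows like $N$). To bound the energy of any competitor with trace $\uu$ on $\p B_{r_k}$ you need control on $\D\uu$ there, not just on $|\uu|$. The paper obtains this control by interpolation: from $\|\uu_{x_0,r}\|_{L^\infty(B_1)}<c_0$ together with the uniform $C^{1,\al/2}$ bound of Corollary~\ref{UB} one gets $\|\D\uu_{x_0,r}\|_{L^\infty(B_{1/2})}$ small, hence $E(\uu_{x_0,r},1/2)$ small. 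Your competitor step is essentially trying to reach the same conclusion (sup small $\Rightarrow$ rescaled energy small) but the stated justification is incorrect.

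Second, and more fundamental, is the circularity you yourself flag. Your endgame---$W(\uu,0,0,0^+)\le 0$ forces the blow-up to vanish, contradicting $0\in\Gamma(\uu)$---needs the blow-up to be nonzero, which is precisely non-degeneracy. The paper bypasses Weiss and blow-ups entirely here and proves a self-contained iteration (Lemma~\ref{imp}): if $E(\uu_{x_0,r},1)\le\e_0$ then $E(\uu_{x_0,r/2},1)\le\e_0$. The competitor is not a harmonic function but the \emph{solution} $\vv_r$ of $\Delta\vv_r=f(0,\vv_r)$ with $\vv_r=\uu_r$ on $\p B_1$; when the energy is small, the known non-degeneracy for \emph{solutions} forces $\vv_r\equiv\mathbf 0$ in $B_{1/2}$, and then almost-minimality plus Poincar\'e transfers smallness of $E$ down to the half-scale. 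Iterating, smallness persists at all smaller scales, which contradicts $x_0\in\overline{\{|\uu|>0\}}$ because near a point where $|\uu|>0$ the rescaled energy blows up. This is exactly the ``kill-off mechanism'' you were looking for; the key idea you are missing is to borrow non-degeneracy from the minimizer/solution theory via the PDE competitor rather than to reprove it through blow-ups.
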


The proof of Theorem \ref{thm:nondeg} relies on the following lemma.

\begin{lemma}\label{imp}
Let $\uu$ be an almost minimizer in $B_1$. Then, there exist small constants $\e_0=\e_0(q,n,M)>0$ and $r_0=r_0(q,n,\al,M)>0$ such that for $0<r<r_0$ and $x_0\in B_{1/2}$, if $E(\uu_{x_0,r},1)\le \e_0$ then $E(\uu_{x_0,r/2},1)\le \e_0$.
\end{lemma}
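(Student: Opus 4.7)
The plan is to prove Lemma~\ref{imp} by a contradiction-and-compactness argument, reducing the statement to a rigidity property of \emph{true} minimizers that follows from the Weiss-type monotonicity (Theorem~\ref{thm:Weiss}) together with the blow-up classification and non-degeneracy for true minimizers established in~\cite{FotShaWei21}. Supposing the conclusion fails for a fixed small universal candidate $\e_0$ (to be pinned down at the end of the argument), one extracts sequences: an almost minimizer $\uu_j$ in $B_1$, a point $x_j\in B_{1/2}$, and a radius $r_j\in(0,1/j)$ such that $\vv_j:=(\uu_j)_{x_j,r_j}$ satisfies $E(\vv_j,1)\le\e_0$ but $E(\vv_{j,1/2},1)>\e_0$.

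Each $\vv_j$ is an almost minimizer on $B_1$ for the frozen functional at $x_j$, with gauge $\omega_j(\rho)=M(r_j\rho)^\al\to 0$. Theorem~\ref{thm:grad-u-holder} applied on the rescaled scale yields a uniform bound in $C^{1,\al/2}_{\loc}(B_1)$, and the harmonic-replacement comparison \eqref{eq:(u-h)-diff-est} (applied with the small gauge) delivers strong $H^1_{\loc}(B_1)$ and $C^1_{\loc}(B_1)$ convergence, along a subsequence, to a limit $\vv_\infty$. Since the gauges vanish, $\vv_\infty$ is a true minimizer of a frozen-coefficient functional on $B_1$ (centered at $x_\infty:=\lim x_j\in\overline{B_{1/2}}$). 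Passing to the limit in the two inequalities produces $E(\vv_\infty,1)\le\e_0$ and $E(\vv_{\infty,1/2},1)\ge\e_0$, which in particular forces $\vv_\infty\not\equiv 0$ on $B_{1/2}$.

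To close the argument I would analyze $\vv_\infty$ via the classical Weiss monotonicity (the $a=b=0$ specialization of Theorem~\ref{thm:Weiss}): discarding the non-negative boundary term gives $W(\vv_\infty,0,0,1)\le CE(\vv_\infty,1)\le C\e_0$. The blow-up of $\vv_\infty$ at the origin then yields a dichotomy. Either (i) the blow-up is a nontrivial $\ka$-homogeneous minimizer, which by the classification in~\cite{FotShaWei21} carries Weiss energy at least a universal constant $W_\ast>0$, and monotonicity forces $W_\ast\le W(\vv_\infty,0,0,1)\le C\e_0$, contradicting the choice $\e_0<W_\ast/C$; or (ii) the blow-up is trivial, in which case non-degeneracy of true minimizers forces $\vv_\infty\equiv 0$ in a neighborhood of $0$. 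In case (ii), interpolation between the bound $\|\vv_\infty\|_{L^{q+1}(B_1)}\le\e_0^{1/(q+1)}$ and the uniform $C^{1,\al/2}$ estimate gives $\|\vv_\infty\|_{L^\infty(B_{3/4})}\le C\e_0^\gamma$ for some universal $\gamma>0$. If any free-boundary point of $\vv_\infty$ existed in $B_{1/2}$, non-degeneracy at that point would furnish $\|\vv_\infty\|_{L^\infty(B_{3/4})}\ge c_\ast(1/4)^\ka$, incompatible with the previous bound once $\e_0$ is small. Hence $\vv_\infty\equiv 0$ on $B_{1/2}$, contradicting $E(\vv_{\infty,1/2},1)\ge\e_0>0$. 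The main obstacle is this final rigidity step: $\e_0$ must be tuned sufficiently small in terms of $n,q,M$ and the universal minimal Weiss level $W_\ast$ so as to defeat both branches of the blow-up dichotomy simultaneously, and one must confirm that the constants governing the $L^\infty$--$L^{q+1}$ interpolation and the non-degeneracy of true minimizers are indeed universal (independent of the sequence $\uu_j$).
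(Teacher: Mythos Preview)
Your compactness route is genuinely different from the paper's direct argument. The paper fixes $r$, replaces $\uu_r$ by the \emph{solution} $\vv_r$ of $\Delta\vv_r=f(0,\vv_r)$ in $B_1$ with the same boundary data, uses solution non-degeneracy together with $E(\vv_r,1)\le 2M^2\e_0$ to force $\vv_r\equiv 0$ in $B_{1/2}$ when $\e_0$ is small, and then a convexity computation (testing the equation for $\vv_r$ against $\uu_r-\vv_r$ and applying Young's inequality) yields $\int_{B_1}|\D(\uu_r-\vv_r)|^2\le 4M^4\e_0 r^\al$. This gives $E(\uu_r,1/2)\le Cr^{\al/2}\le\e_0/2^{n+2\ka-2}$ for $r<r_0(q,n,\al,M)$, with no compactness and an explicit $r_0$.

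Your argument has a gap in the blow-up dichotomy. The hypothesis only places $x_0\in B_{1/2}$, \emph{not} on the free boundary, so after passing to the limit there is no reason for the origin to lie in $\Gamma^\ka(\vv_\infty)$ or even in $\{\vv_\infty=0\}$. If $|\vv_\infty(0)|>0$ the $\ka$-homogeneous rescalings $\vv_\infty(tx)/t^\ka$ diverge, $W(\vv_\infty,0,0,0+)=-\infty$, and neither branch (i) nor (ii) applies; likewise if $0$ is a free boundary point outside $\Gamma^\ka(\vv_\infty)$. The fix is to discard the dichotomy entirely and run your case~(ii) tools directly: from $E(\vv_\infty,1)\le\e_0$ and the uniform $C^{1,\al/2}$ estimate you already have $\|\vv_\infty\|_{L^\infty(B_{3/4})}\le C\e_0^\gamma$; if $\vv_\infty\not\equiv 0$ in $B_{1/2}$, pick any $z\in B_{1/2}\cap\overline{\{|\vv_\infty|>0\}}$ and invoke solution non-degeneracy at $z$ (no need for $z$ to be a free boundary point) to get $\sup_{B_{1/4}(z)}|\vv_\infty|\ge c_*(1/4)^\ka$, a contradiction for $\e_0$ small. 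This is exactly the mechanism the paper applies to the replacement $\vv_r$; your compactness merely transports that step to the limit, at the price of the explicit $r_0$ the direct argument provides.
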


\begin{proof}
For simplicity we may assume $x_0=0$. For $0<r<r_0$ to be specified later, let $\vv_r$ be a solution of $\Delta\vv_r=f(0,\vv_r)$ in $B_1$ with $\vv_r=\uu_r$ on $\p B_1$. We claim that if $\e_0=\e_0(q,n,M)>0$ is small, then $\vv_r\equiv \mathbf{0}$ in $B_{1/2}$. Indeed, if not, then $\sup_{B_{3/4}}|\vv_r|\ge c_0(q,n)$ by the non-degeneracy of the solution $\vv_r$. Thus $|\vv_r(z_0)|\ge c_0(q,n)$ for some $z_0\in \overline{B_{3/4}}$. Moreover, from $1/M\le\la_\pm\le M$ and $0<q<1$, we have $1/M\le\frac2{1+q}\la_\pm\le 2M$, thus $$
1/M|\vv_r|^{q+1}\le 2F(0,\vv_r)\quad\text{and}\quad2F(0,\uu_r)\le 2M|\uu_r|^{q+1}.
$$
Then \begin{align*}
    E(\vv_r,1)&\le M\int_{B_1}\left(|\D\vv_r|^2+2F(0,\vv_r)\right)\le M\int_{B_1}\left(|\D\uu_r|^2+2F(0,\uu_r)\right) \\&\le 2M^2E(\uu_r,1)\le 2M^2\e_0.
\end{align*}
Combining this with the estimate for the solution $\vv_r$ gives
$$\sup_{B_{7/8}}|\D\vv_r|\le C(n,M)\left(E(\vv_r,1)^{1/2}+1\right)\le C(n,M),
$$
hence $$
|\vv_r|\ge\frac{c_0(q,n)}2\quad\text{in}\quad B_{\rho_0}(z_0)
$$
for some small $\rho_0=\rho_0(q,n,M)>0$. This implies that $$
c(q,n,M)\le\int_{B_{\rho_0}(z_0)}|\vv_r|^{q+1}\le E(\vv_r,1)\le 2M^2\e_0,
$$
which is a contradiction if $\e_0=\e_0(q,n,M)$ is small.\\

Now, we use $E(\vv_r,1)\le 2M^2\e_0$ together with the fact that $\uu_r$ satisfies \eqref{eq:alm-min-frozen} in $B_1$ with gauge function $\omega_r(\rho)\le M(r\rho)^\al$ to get \begin{align*}
    \int_{B_1}\left(|\D\uu_r|^2+2F(0,\uu_r)\right)&\le (1+ Mr^\alpha)\int_{B_1}\left(|\D\vv_r|^2+2F(0,\vv_r)\right)\\
    &\le 4M^4\e_0 r^\al+\int_{B_1}\left(|\D\vv_r|^2+2F(0,\vv_r)\right),
\end{align*}
thus \begin{align*}
    &\int_{B_1}\left(|\D\uu_r|^2-|\D\vv_r|^2\right)\\
    &\le 4M^4\e_0 r^\al+2\int_{B_1}\left(F(0,\vv_r)-F(0,\uu_r)\right)\\
    &=4M^4\e_0r^\al+\frac{2\la_+(0)}{1+q}\int_{B_1}\left(|\vv_r^+|^{q+1}-|\uu_r^+|^{q+1}\right)+\frac{2\la_-(0)}{1+q}\int_{B_1}\left(|\vv_r^-|^{q+1}-|\uu_r^-|^{q+1}\right).
 \end{align*}
This gives \begin{align}\label{eq:u_r-v_r-diff-est}\begin{split}
    &\int_{B_1}|\D(\uu_r-\vv_r)|^2
    =\int_{B_1}\left(|\D\uu_r|^2-|\D\vv_r|^2-2\D(\uu_r-\vv_r)\cdot\D\vv_r\right)\\
    &=\int_{B_1}\left(|\D\uu_r|^2-|\D\vv_r|^2\right)+2\int_{B_1}(\uu_r-\vv_r)f(0,\vv_r)\\
    &\begin{multlined}=\int_{B_1}\left(|\D\uu_r|^2-|\D\vv_r|^2\right)+2\la_+(0)\int_{B_1}|\vv_r^+|^{q-1}(\uu_r-\vv_r)\cdot\vv_r^+\\
    -2\la_-(0)\int_{B_1}|\vv_r|^{q-1}(\uu_r-\vv_r)\cdot\vv_r^-\end{multlined}\\
    &\begin{multlined}\le 4M^4\e_0r^\al+\frac{2\la_+(0)}{1+q}\int_{B_1}\left(|\vv_r^+|^{q+1}-|\uu_r^+|^{q+1}+(1+q)|\vv_r^+|^{q-1}(\uu_r-\vv_r)\cdot\vv_r^+\right)\\
    +\frac{2\la_-(0)}{1+q}\int_{B_1}\left(|\vv_r^-|^{q+1}-|\uu_r^-|^{q+1}-(1+q)|\vv_r^-|^{q-1}(\uu_r-\vv_r)\cdot\vv_r^-\right)\end{multlined}\\
    &\begin{multlined}=4M^4\e_0r^\al+\frac{2\la_+(0)}{1+q}\int_{B_1}\left((1+q)|\vv_r^+|^{q-1}\uu_r\cdot\vv_r^+-q|\vv_r^+|^{q+1}-|\uu_r^+|^{q+1}\right)\\
    +\frac{2\la_-(0)}{1+q}\int_{B_1}\left(-(1+q)|\vv_r^-|^{q-1}\uu_r\cdot\vv_r^--q|\vv_r^-|^{q+1}-|\uu_r^-|^{q+1}\right).
    \end{multlined}
\end{split}\end{align}
To compute the last two terms, we use $
\uu_r\cdot\vv_r^+\le\uu_r^+\cdot\vv_r^+\le|\uu_r^+||\vv_r^+|$ and Young's inequality to get$$
|\vv_r^+|^{q-1}\uu_r\cdot\vv_r^+\le |\uu_r^+||\vv_r^+|^q\le\frac1{q+1}|\uu_r^+|^{q+1}+\frac{q}{1+q}|\vv_r^+|^{q+1}.
$$
Similarly, from $-\uu_r\cdot\vv_r^-\le\uu_r^-\cdot\vv_r^-\le|\uu_r^-||\vv_r^-|,$ we also have $$
-|\vv_r^-|^{q-1}\uu_r\cdot\vv_r^-\le |\uu_r^-||\vv_r^-|^q\le\frac1{q+1}|\uu_r^-|^{q+1}+\frac{q}{1+q}|\vv_r^-|^{q+1}.
$$
Combining those inequalities  and \eqref{eq:u_r-v_r-diff-est} yields $$
\int_{B_1}|\D(\uu_r-\vv_r)|^2\le 4M^4\e_0r^\al.
$$
Applying Poincar\'e inequality and H\"older's inequality, we obtain $$
\int_{B_1}\left(|\D(\uu_r-\vv_r)|^2+|\uu_r-\vv_r|^{q+1}\right)\le C(q,n,M)r^{\al/2}.
$$
Since $\vv_r\equiv\mathbf0$ in $B_{1/2}$, we see that for $0<r<r_0(q,n,\al,M)$, \begin{align*}
    E(\uu_r,1/2)=\int_{B_{1/2}}\left(|\D\uu_r|^2+|\uu_r|^{q+1}\right)\le C(q,n,M)r^{\al/2}\le\frac{\e_0}{2^{n+2\ka-2}}.
\end{align*}
Therefore, we conclude that \begin{equation*}
E(\uu_{r/2},1)=2^{n+2\ka-2}E(\uu_r,1/2)\le \e_0.\qedhere
\end{equation*}
\end{proof}

Lemma \ref{imp} immediately implies the following integral form of non-degeneracy.

\begin{lemma}\label{lem:nondeg-int}
Let $\uu$, $\e_0$ and $r_0$ be as in the preceeding lemma. If $x_0\in \overline{\{|\uu|>0\}}\cap B_{1/2}$ and $0<r<r_0$, then \begin{align}
    \label{eq:alm-min-nondeg}
    \int_{B_r(x_0)}\left(|\D\uu|^2+|\uu|^{q+1}\right)\ge \e_0 r^{n+2\ka-2}.
\end{align}
\end{lemma}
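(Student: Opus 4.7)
The plan is to argue by contrapositive: if $E(\uu_{x_0,r},1)=r^{-(n+2\ka-2)}\int_{B_r(x_0)}(|\D\uu|^2+|\uu|^{q+1})<\e_0$, I will show that $\uu\equiv\mathbf{0}$ on a whole ball around $x_0$, which contradicts $x_0\in\overline{\{|\uu|>0\}}$. Applying Lemma~\ref{imp} iteratively gives $E(\uu_{x_0,r/2^k},1)\le\e_0$ for every $k\ge 0$. Moreover, inspecting the proof of Lemma~\ref{imp} (not just its statement), one has the quantitative bound $E(\uu_r,1/2)\le Cr^{\alpha/2}$ whenever $E(\uu_r,1)\le\e_0$; iterating this sharper version yields the decay
\[
\int_{B_\rho(x_0)}(|\D\uu|^2+|\uu|^{q+1})\;\le\; C\rho^{n+2\ka-2+\alpha/2}\qquad (0<\rho\le r).
\]

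Next I would propagate this smallness from $x_0$ to every point $y$ in a small neighborhood. Fix $y\in B_{\rho_*}(x_0)$ with $\rho_*\le r/4$. The elementary inclusion $B_{\rho_*}(y)\subset B_{2\rho_*}(x_0)$, combined with the decay above, gives
\[
E(\uu_{y,\rho_*},1)\;\le\; C\rho_*^{\alpha/2}.
\]
Choosing $\rho_*$ small enough that $C\rho_*^{\alpha/2}\le \e_0$, Lemma~\ref{imp} can be launched afresh at $y$ and iterated to produce $\int_{B_\rho(y)}(|\D\uu|^2+|\uu|^{q+1})\le C\rho^{n+2\ka-2}$ for all small $\rho>0$. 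Hence $\fint_{B_\rho(y)}|\uu|^{q+1}\le C\rho^{2\ka-2}\to 0$ as $\rho\to 0$, and the continuity of $\uu$ guaranteed by Theorem~\ref{thm:grad-u-holder} forces $\uu(y)=\mathbf{0}$.

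Since $y\in B_{\rho_*}(x_0)$ was arbitrary, $\uu\equiv\mathbf{0}$ on $B_{\rho_*}(x_0)$, so $x_0$ sits in the interior of $\{\uu=\mathbf{0}\}$ and cannot belong to $\overline{\{|\uu|>0\}}$, the desired contradiction. The delicate step is not the iteration of Lemma~\ref{imp} at $x_0$ itself, but rather the passage from pointwise smallness at $x_0$ to smallness on a whole neighborhood: one needs both the quantitative Hölder-rate decay $\rho^{\alpha/2}$ extracted from inside the proof of Lemma~\ref{imp}, and the transfer of this decay to every nearby center $y$ through the ball inclusion $B_\rho(y)\subset B_{2\rho}(x_0)$, so that Lemma~\ref{imp} can be re-started at $y$. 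Without this transfer one only concludes $\uu(x_0)=\mathbf{0}$, which is not enough to rule out $x_0\in\overline{\{|\uu|>0\}}$.
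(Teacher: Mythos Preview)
Your argument is correct, but it takes a longer route than the paper's. The paper first observes that, by the continuity of $\uu$ (Theorem~\ref{thm:grad-u-holder}), it suffices to prove the inequality for $x_0$ in the \emph{open} set $\{|\uu|>0\}$. With this reduction in hand, one simply iterates the statement of Lemma~\ref{imp} at $x_0$ to get $E(\uu_{x_0,r/2^j},1)\le\e_0$ for all $j$, and then uses $|\uu(x_0)|>0$ directly: continuity gives $|\uu|\ge c_1>0$ on $B_{r/2^j}(x_0)$ for $j$ large, so
\[
\e_0\;\ge\;E(\uu_{x_0,r/2^j},1)\;\ge\;\frac{1}{(r/2^j)^{n+2\ka-2}}\int_{B_{r/2^j}(x_0)}c_1^{q+1}\;=\;\frac{C(n)\,c_1^{q+1}}{(r/2^j)^{2\ka-2}}\;\longrightarrow\;\infty,
\]
an immediate contradiction. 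This avoids both the extraction of the quantitative $\rho^{\alpha/2}$ rate from inside the proof of Lemma~\ref{imp} and the transfer of smallness to neighboring centers $y$. Your approach has the merit of working directly on $\overline{\{|\uu|>0\}}$ and yields the stronger intermediate conclusion that $\uu\equiv\mathbf{0}$ on a full ball around $x_0$; but the paper's one-line reduction to the open set is the cleaner route, and your last paragraph (flagging the transfer as ``the delicate step'') turns out to be unnecessary once that reduction is made.
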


\begin{proof}
By the continuity of $\uu$, it is enough to prove \eqref{eq:alm-min-nondeg} for $x_0\in\{|\uu|>0\}\cap B_{1/2}$. Towards a contradiction, we suppose that $\int_{B_r(x_0)}\left(|\D\uu|^2+|\uu|^{q+1}\right)\le\e_0r^{n+2\ka-2}$, or equivalently $E(\uu_{x_0,r},1)\le \e_0$. Then, by the previous lemma we have $E(\uu_{x_0,r/2^j},1)\le\e_0$ for all $j\in\N$. From $|\uu(x_0)|>0$, we see that $|\uu|>c_1>0$ in $B_{r/2^j}(x_0)$ for large $j$. Therefore, \begin{align*}
    \e_0&\ge E(\uu_{x_0,r/2^j},1)=\frac1{(r/2^j)^{n+2\ka-2}}\int_{B_{r/2^j}(x_0)}\left(|\D\uu|^2+|\uu|^{q+1}\right)\\
    &\ge\frac1{(r/2^j)^{n+2\ka-2}}\int_{B_{r/2^j}(x_0)}2c_1^{q+1}= \frac{C(n)c_1^{q+1}}{(r/2^j)^{2\ka-2}}\to\infty\quad\text{as }j\to\infty.
\end{align*}
This is a contradiction, as desired.
\end{proof}

We are now ready to prove Theorem~\ref{thm:nondeg}.

\begin{proof}[Proof of Theorem~\ref{thm:nondeg}] Assume by contradiction that 
$$\uu_{x_0,r}(x) < c_0, \quad   x\in B_{1}, $$ with $c_0$ small, to be made precise later. Let $\epsilon_0, r_0$ be the constants in Lemma \ref{imp} and $\omega_n=|B_1|$ be the volume of an $n$-dimensional ball. For $r<r_0$, by interpolation together with the estimate \eqref{1a},
\begin{align*}\|\nabla \uu_{x_0,r}\|_{L^\infty (B_{1/2})} & \leq \epsilon \|\uu_{x_0,r}\|_{C^{1,\alpha/2}(B_{3/4})} + K(\epsilon) \|\uu_{x_0,r}\|_{L^\infty(B_{3/4})}\\
&\le \epsilon  C(n,\alpha,M, E(\uu, 1)) + K(\epsilon)c_0 \leq \frac{\epsilon_0}{2^{n+2\ka-1}\omega_n^{1/2}},
 \end{align*}
by choosing $\epsilon= \frac{\epsilon_0}{2^{n+2\ka}C\omega_n^{1/2}}$ and $c_0 \leq \epsilon_0/(2^{n+2\ka}K(\epsilon)\omega_n^{1/2})$.  Thus, if $\omega_n c_0^{q+1} < \epsilon_0/2^{n+2\ka-1}$, then  $E(\uu_{x_0,r}, \frac{1}{2}) < \frac{\epsilon_0}{2^{n+2\ka-2}}$, which contradicts Lemma \ref{lem:nondeg-int}.
\end{proof}


\section{Homogeneous blowups and Energy decay estimates}\label{sec:blowup}
In this section we study the homogeneous rescalings and blowups. We first show that the $\ka$-homogeneous blowups exist at free boundary points.

\begin{lemma}\label{lem:blowup-exist}
Suppose $\uu$ is an almost minimizer in $B_1$ and $x_0\in \Gamma^\ka(\uu)\cap B_{1/2}$. Then for $\ka$-homogeneous rescalings $\uu_{x_0,t}(x)=\frac{\uu(x_0+tx)}{t^\ka}$, there exists $\uu_{x_0,0}\in C^1_{\loc}(\R^n)$ such that over a subsequence $t=t_j\to 0+$, $$
\uu_{x_0,t_j}\to \uu_{x_0,0}\quad\text{in }C^1_{\loc}(\R^n).
$$
Moreover, $\uu_{x_0,0}$ is a nonzero $\ka$-homogeneous global solution of $\Delta\uu_{x_0,0}=f(x_0,\uu_{x_0,0})$.
\end{lemma}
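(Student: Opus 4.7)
The strategy follows the standard three-step blowup scheme: compactness of rescalings, identification of the limit equation, and homogeneity via the Weiss functional; non-triviality is a separate consequence of non-degeneracy.

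First, for compactness, I would apply Corollary~\ref{UB}, which gives the uniform bound $\|\uu_{x_0,t}\|_{C^{1,\alpha/2}(K)} \le C(K)$ on every $K \Subset \R^n$, provided $t$ is small enough depending on $K$. Arzel\`a--Ascoli together with a Cantor diagonal argument over an exhaustion $K_l \nearrow \R^n$ then produces a subsequence $t_j \to 0^+$ and a function $\uu_{x_0,0} \in C^1_{\loc}(\R^n)$ with $\uu_{x_0,t_j} \to \uu_{x_0,0}$ in $C^1_{\loc}(\R^n)$.

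Second, to show $\uu_{x_0,0}$ solves $\Delta \uu_{x_0,0} = f(x_0, \uu_{x_0,0})$ in $\R^n$, note that $\uu_{x_0,t}$ is an almost minimizer on $B_{1/(2t)}$ for the frozen-coefficient functional with gauge $\widetilde{\omega}_t(\rho) = C(M(t\rho)^\alpha + (t\rho)^\alpha) \to 0$ as $t \to 0^+$. For any ball $B_r \subset \R^n$ and any competitor $\vv$ agreeing with $\uu_{x_0,0}$ outside a compact subset of $B_r$, one constructs competitors $\vv_j$ for $\uu_{x_0,t_j}$ by a cut-off near $\partial B_r$ (exploiting the $C^1$ convergence to control the Dirichlet energy of the transition layer), and passes to the limit in \eqref{eq:alm-min-frozen}. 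This yields that $\uu_{x_0,0}$ minimizes the frozen-coefficient functional locally, hence is a weak solution of the Euler--Lagrange equation.

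Third, for $\kappa$-homogeneity I use Theorem~\ref{thm:Weiss}. A direct scaling computation, using the critical exponent $\kappa(q+1) = 2\kappa-2$, gives
\[
W(\uu_{x_0,t}, 0, x_0, \rho) = \frac{e^{a\rho^\alpha}}{(\rho t)^{n+2\kappa-2}} \bigl[\,\text{bulk and boundary integrals over } B_{\rho t}(x_0)\,\bigr],
\]
which differs from $W(\uu, x_0, x_0, \rho t)$ only by factors $e^{a(\rho^\alpha - (\rho t)^\alpha)}$ and a $b$-dependent correction on the boundary term, both tending to $1$ as $t \to 0$. Since $W(\uu, x_0, x_0, s)$ is monotone for $s < t_0$, it has a finite limit $W_0$ as $s \searrow 0$; combined with the $C^1_{\loc}$ convergence of $\uu_{x_0,t_j}$ (which controls $\int_{B_\rho}$ of the bulk terms and $\int_{\partial B_\rho}$ of $|\uu|^2$) this gives
\[
W_\infty(\uu_{x_0,0}, \rho) := \rho^{-(n+2\kappa-2)}\!\!\int_{B_\rho}\!(|\D \uu_{x_0,0}|^2 + 2F(x_0,\uu_{x_0,0})) - \kappa\rho^{-(n+2\kappa-1)}\!\!\int_{\partial B_\rho}\!|\uu_{x_0,0}|^2 \equiv W_0
\]
for all $\rho > 0$. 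Applying the classical (gauge-free) Weiss formula for the limiting solution $\uu_{x_0,0}$ forces the equality case, namely $\partial_\nu \uu_{x_0,0} = (\kappa/\rho)\uu_{x_0,0}$ on every sphere, which is the infinitesimal form of $\kappa$-homogeneity.

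Finally, nontriviality comes directly from Theorem~\ref{thm:nondeg}: $\sup_{B_1}|\uu_{x_0,t_j}| \ge c_0 > 0$ for all small $t_j$, hence $\sup_{B_1}|\uu_{x_0,0}| \ge c_0$. The main technical obstacle is the second step: constructing admissible competitors for the rescaled almost-minimizing inequality, so that one may pass to the limit and obtain the correct Euler--Lagrange equation for $\uu_{x_0,0}$ despite only having $C^1_{\loc}$ rather than strong $W^{1,2}_{\loc}$ convergence a priori (the latter can be recovered from the minimizing property in the limit together with lower semicontinuity).
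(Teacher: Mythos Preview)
Your plan is correct and follows the same three-step structure (compactness via Corollary~\ref{UB}, limit equation, homogeneity via Weiss) as the paper, with non-triviality from Theorem~\ref{thm:nondeg}. The only notable difference is in your second step, which you flag as the main technical obstacle: the paper bypasses the cut-off competitor construction entirely. Instead, for each fixed $R$ it takes $\vv_{t_j}$ to be the \emph{solution} of $\Delta\vv_{t_j}=f(x_0,\vv_{t_j})$ in $B_R$ with $\vv_{t_j}=\uu_{x_0,t_j}$ on $\partial B_R$; these are uniformly $C^{1,\alpha/2}$-bounded and converge to a solution $\vv_0$ with $\vv_0=\uu_{x_0,0}$ on $\partial B_R$. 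Passing to the limit in the almost-minimizing inequality (against the single competitor $\vv_{t_j}$) gives $\int_{B_R}(|\D\uu_{x_0,0}|^2+2F(x_0,\uu_{x_0,0}))\le\int_{B_R}(|\D\vv_0|^2+2F(x_0,\vv_0))$, and since the functional is strictly convex this forces $\uu_{x_0,0}=\vv_0$. This is cleaner than the generic cut-off argument, though less flexible. For Step~3 the paper also takes a slightly more direct route: rather than showing the limiting Weiss energy is constant and invoking the classical formula for the solution $\uu_{x_0,0}$, it integrates the derivative bound of Theorem~\ref{thm:Weiss} between $rt_j$ and $Rt_j$, rescales the right-hand side to an integral over the annulus $r<|x|<R$ for $\uu_{x_0,t_j}$, and passes to the limit; since $W(\uu,x_0,x_0,0+)$ exists the left side vanishes, giving $x\cdot\D\uu_{x_0,0}-\kappa\uu_{x_0,0}=0$ directly.
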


\begin{proof}
For simplicity we assume $x_0=0$ and write $\uu_t=\uu_{0,t}$ and $W(\uu,r)=W(\uu,0,0,r)$.

\medskip\noindent \emph{Step 1.} We first prove the $C^1$-convergence. For any $R>1$, Corollary~\ref{UB} ensures that there is a function $\uu_0\in C^1(B_{R/2})$ such that over a subsequence $t=t_j\to0+$, $$
\uu_{t_j}\to\uu_0\quad\text{in }C^1(B_{R/2}).
$$
By letting $R\to\infty$ and using a Cantor's diagonal argument, we obtain that for another subsequence $t=t_j\to 0+$, 
\begin{align*}
\uu_{t_j}\to\uu_0\quad\text{in }C^1_{\loc}(\R^n).
\end{align*}

\noindent \emph{Step 2.}
By the non-degeneracy in Theorem~\ref{thm:nondeg}, $\sup_{B_1}|\uu_{t_j}|\ge c_0>0$. By the $C^1$-convergence of $\uu_{t_j}$ to $\uu_0$, we have $\uu_0$ is nonzero. To show that $\uu_0$ is a global solution, for fixed $R>1$ and small $t_j$, let $\vv_{t_j}$ be the solution of $\Delta\vv_{t_j}=f(0,\vv_{t_j})$ in $B_R$ with $\vv_{t_j}=\uu_{t_j}$ on $\p B_R$. Then, by elliptic theory, \begin{align*}
    \|\vv_{t_j}\|_{C^{1,\al/2}(\overline{B_R})}\le C(n,m,R)(\|\uu_{t_j}\|_{C^{1,\al/2}(\overline{B_R})}+1)\le C.
\end{align*}
Thus, there exists a solution $\vv_0\in C^1(\overline{B_R})$ such that $$
\vv_{t_j}\to \vv_0\quad\text{in }C^1(\overline{B_R}).
$$
Moreover, we use again that $\uu_{t_j}$ is an almost minimizer with the frozen coefficients in $B_{1/2t_j}$ with a gauge function $\omega_j(\rho)\le M(t_j\rho)^\al$ to have $$
\int_{B_R}\left(|\D\uu_{t_j}|^2+2F(0,\uu_{t_j})\right)\le (1+M(t_jR)^\al)\int_{B_R}\left(|\D\vv_{t_j}|^2+2F(0,\vv_{t_j})\right).
$$
By taking $t_j\to 0$ and using the $C^1$-convergence of $\uu_{t_j}$ and $\vv_{t_j}$, we obtain 
$$
\int_{B_R}\left(|\D\uu_0|^2+2F(0,\uu_0)\right)\le \int_{B_R}\left(|\D\vv_0|^2+2F(0,\vv_0)\right).
$$
Since $\vv_{t_j}=\uu_{t_j}$ on $\p B_R$, we also have $\vv_0=\uu_0$ on $\p B_R$. This means that
$\uu_0$ is equal to the energy minimizer (or solution) $\vv_0$ in $B_R$. Since $R>1$ is arbitrary, we conclude that $\uu_0$ is a global solution.

\medskip\noindent \emph{Step 3.} Now we prove that $\uu_0$ is $\ka$-homogeneous. Fix $0<r<R<\infty$. By the Weiss-type monotonicity formula, Theorem~\ref{thm:Weiss}, we have that for small $t_j$, \begin{align}\begin{split}\label{eq:W-diff-bound}
    &W(\uu,Rt_j)-W(\uu,rt_j)\\
    &\qquad=\int_{rt_j}^{Rt_j}\frac{d}{d\rho}W(\uu,\rho)\,d\rho\\
    &\qquad\ge \int_{rt_j}^{Rt_j}\frac1{\rho^{n+2\ka}}\int_{\p B_\rho}|x\cdot\D\uu-\ka(1-b\rho^\al)\uu|^2\,dS_xd\rho\\
    &\qquad=\int_r^R\frac{t_j}{(t_j\si)^{n+2\ka}}\int_{\p B_{t_j\si}}|x\cdot\D\uu-\ka(1-b(t_j\si)^\al)\uu|^2\,dS_xd\si\\
    &\qquad=\int_r^R\frac1{t_j^{2\ka}\si^{n+2\ka}}\int_{\p B_\si}|t_jx\cdot\D\uu(t_jx)-\ka(1-b(t_j\si)^\al)\uu(t_jx)|^2\,dS_xd\si\\
    &\qquad=\int_r^R\frac1{\si^{n+2\ka}}\int_{\p B_\si}|x\cdot\D\uu_{t_j}-\ka(1-b(t_j\si)^\al)\uu_{t_j}|^2\,dS_xd\si.
\end{split}\end{align}
On the other hand, by the optimal growth estimates Theorem~\ref{thm:opt-growth}, $$
|W(\uu,r)|\le C\quad\text{for }0<r<r_0,
$$
thus $W(\uu,0+)$ is finite. Using this and taking $t_j\to 0+$ in \eqref{eq:W-diff-bound}, we get \begin{align*}
    0=W(\uu,0+)-W(\uu,0+)\ge \int_r^R\frac1{\si^{n+2\ka-1}}\int_{\p B_\si}|x\cdot\D\uu_0-\ka\uu_0|^2\,dS_xd\si.
\end{align*}
Taking $r\to0+$ and $R\to\infty$, we conclude that $x\cdot\D\uu_0-\ka\uu_0=0$ in $\R^n$, which implies that $\uu_0$ is $\ka$-homogeneous in $\R^n$.
\end{proof}

Out next objective is the polynomial decay rate of the Weiss-type energy functional $W$ at the regular free boundary points $x_0\in \mathcal{R}_\uu$, Lemma~\ref{lem:Weiss-decay}. It can be achieved with the help of the epiperimetric inequality, proved in \cite{FotShaWei21}. To describe the inequality, we let
$$
M_{x_0}(\vv):=\int_{B_1}\left(|\D\vv|^2+2F(x_0,\vv)\right)-\ka\int_{\p B_1}|\vv|^2
$$
and recall that $\Hf_{x_0}$ is a class of half-space solutions.

\begin{theorem}[Epiperimetric inequality]\label{epi}
There exist $\eta\in(0,1)$ and $\de>0$ such that if $\cc\in W^{1,2}(B_1)$ is a homogeneous function of degree $\ka$ and $\|\cc-\h\|_{W^{1,2}(B_1)}\le \de$ for some $\h\in\Hf_{x_0}$, then there exists a function $\vv\in W^{1,2}(B_1)$ such that $\vv=\cc$ on $\p B_1$ and $M_{x_0}(\vv)\le (1-\eta)M_{x_0}(\cc)+\eta M_{x_0}(\h)$.
\end{theorem}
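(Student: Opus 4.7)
The plan is a compactness-and-contradiction argument that linearizes $M_{x_0}$ around the smooth finite-dimensional manifold $\Hf_{x_0}$ and then exploits a spectral gap of the resulting linearized functional. Suppose for contradiction that no pair $(\eta,\de)$ works; then one extracts sequences of $\ka$-homogeneous $\cc_k\in W^{1,2}(B_1)$ and $\h_k\in\Hf_{x_0}$ with $\e_k:=\|\cc_k-\h_k\|_{W^{1,2}(B_1)}\to 0$ for which every admissible competitor $\vv$ for $\cc_k$ fails the asserted improvement. Since $\Hf_{x_0}$ is compactly parametrized by $(\nu,\mathbf{e})\in\sS^{n-1}\times\sS^{m-1}$, up to a subsequence $\h_k\to\h_\infty=\be_{x_0}(x\cdot\nu_\infty)_+^\ka\mathbf{e}_\infty$; the renormalized deviations $\ww_k:=(\cc_k-\h_k)/\e_k$, of unit $W^{1,2}$-norm and $\ka$-homogeneous, converge up to subsequence weakly in $W^{1,2}(B_1)$ and strongly in $L^2(B_1)$ and $L^2(\p B_1)$ to some $\ka$-homogeneous $\ww_\infty$.

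Next I Taylor-expand $F(x_0,\h_k+\e_k\ww_k)$ around $\h_k$ to second order. Where $|\h_k|\gg\e_k|\ww_k|$, the smoothness of $F$ away from zero gives a quadratic Hessian term of order $\e_k^2|\h_k|^{q-1}|\ww_k|^2$, and the critical homogeneity identity $\ka(1-q)=2$ turns $|\h_k|^{q-1}$ into the inverse-square weight $(x\cdot\nu_k)^{-2}$, integrable against $|\ww_k|^2$ via a Hardy inequality on the half-ball combined with the $L^2(\p B_1)$-trace control of $\ww_k$. On the thin transition region $\{|\h_k|\lesssim\e_k|\ww_k|\}$, of thickness $O(\e_k^{1/\ka})$, a careful direct treatment of $F$ matched against the leading quadratic term controls the remainder. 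Passing to the limit reduces the failure of the nonlinear epiperimetric inequality to the failure of a linear one for the quadratic functional
$$
\cL(\ww):=\int_{B_1}|\D\ww|^2+\int_{B_1}Q_{\h_\infty}(\ww)-\ka\int_{\p B_1}|\ww|^2
$$
on $\ka$-homogeneous $\ww$, where $Q_{\h_\infty}$ is the Hessian form of $F(x_0,\cdot)$ at $\h_\infty$.

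I analyze $\cL$ spectrally by decomposing $\ww=w_\parallel\mathbf{e}_\infty+\ww_\perp$ along $\mathbf{e}_\infty$: the form $Q_{\h_\infty}$ splits into a scalar problem for $w_\parallel$ with full Hessian weight from $|s|^{q+1}$, and a vector problem for $\ww_\perp$ with the milder weight $|\h_\infty|^{q-1}|\ww_\perp|^2$ coming from tangential differentiation of $|\mathbf{v}^+|^{q-1}\mathbf{v}^+$. Each becomes a weighted spherical eigenvalue problem on the hemisphere $\p B_1\cap\{x\cdot\nu_\infty>0\}$ with discrete spectrum; the kernel at homogeneity degree $\ka$ is identified as precisely the tangent space $T_{\h_\infty}\Hf_{x_0}$, spanned by infinitesimal rescalings of $\be_{x_0}$ and rotations of $(\nu_\infty,\mathbf{e}_\infty)$. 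All orthogonal modes enjoy a strict positive spectral gap $\eta_0>0$, so $\cL(\ww)\ge\eta_0\|\ww\|_{W^{1,2}(B_1)}^2$ off the kernel.

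Finally I use the gap to build a competitor. Splitting $\ww_k=\ww_k^T+\ww_k^\perp$ along the tangent/normal decomposition at $\h_k$, I absorb $\e_k\ww_k^T$ into a nearby $\h_k'\in\Hf_{x_0}$ via the implicit function theorem on the smooth finite-dimensional manifold $\Hf_{x_0}$, and set $\vv_k:=\h_k'+\e_k\ww_k^\perp$, corrected on $\p B_1$ by a harmonic extension of the $o(\e_k)$ trace discrepancy. Then $\vv_k$ is admissible and satisfies $M_{x_0}(\vv_k)\le M_{x_0}(\cc_k)-(\eta_0/2)\e_k^2$ for large $k$, contradicting the hypothesis. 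The main obstacle is the delicate linearization: the Hessian of $F$ blows up at $\{\h_\infty=0\}$ with the exact critical inverse-square rate $(x\cdot\nu_\infty)^{-2}$, and only the identity $\ka(1-q)=2$ makes the resulting integrals finite, via Hardy on the bulk plus a matched estimate on the thin transition region where the bulk expansion fails. A secondary technicality is the vector-valued weighted spectral analysis, requiring separate treatment of the scalar mode $w_\parallel$ and the perpendicular modes $\ww_\perp$ and their coupling through the boundary term $\int_{\p B_1}|\ww|^2$.
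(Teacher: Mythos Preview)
The paper does not prove Theorem~\ref{epi}; it is quoted verbatim from \cite{FotShaWei21} and used as a black box in the subsequent energy-decay argument. There is therefore no ``paper's own proof'' to compare your attempt against.

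Your outline is the standard compactness--linearization--spectral-gap scheme for epiperimetric inequalities and is correct in spirit; indeed the proof in \cite{FotShaWei21} proceeds along closely related lines (second variation at a half-space solution plus a gap for the linearized operator). A couple of points deserve care. First, the tangent space $T_{\h_\infty}\Hf_{x_0}$ is spanned only by infinitesimal rotations of $\nu_\infty$ and of $\mathbf e_\infty$; the constant $\be_{x_0}$ is fixed by $\la_+(x_0)$ and is not a free parameter, so ``infinitesimal rescalings of $\be_{x_0}$'' is not a kernel direction. Second, the assertion that the kernel of $\cL$ at homogeneity $\ka$ coincides exactly with $T_{\h_\infty}\Hf_{x_0}$ is the heart of the matter and is not automatic: one must actually compute the spectrum of the weighted spherical operator on the hemisphere (separately for the parallel and perpendicular components) and verify that no accidental $\ka$-homogeneous null modes appear beyond the geometric ones. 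Finally, in your competitor step, the trace correction you add by harmonic extension must be shown to contribute $o(\e_k^2)$ to $M_{x_0}$; this is fine because the discrepancy is $o(\e_k)$ in $H^{1/2}(\p B_1)$, but it should be stated. None of these are fatal gaps---they are exactly the technical work that a full write-up would have to supply.
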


For $x_0\in B_{1/2}$ and $0<r<1/2$, we denote the $\ka$-homogeneous replacement of $\uu$ in $B_r(x_0)$ (or equivalently, the $\ka$-homogeneous replacement of $\uu_{x_0,r}$ in $B_1$) by $$
\cc_{x_0,r}(x):=|x|^\ka\uu_{x_0,r}\left(\frac x{|x|}\right)=\left(\frac{|x|}r\right)^\ka\uu\left(x_0+\frac{r}{|x|}x\right),\quad x\in\R^n.
$$

\begin{lemma}
\label{lem:Weiss-decay}
Let $\uu$ be an almost minimizer in $B_1$ and $x_0\in \mathcal{R}_\uu\cap B_{1/2}$. Suppose that the epiperimetric inequality holds with $\eta\in(0,1)$ for each $\cc_{x_0,r},$ $0<r<r_1<1$.
Then $$
W(\uu,x_0,x_0,r)-W(\uu,x_0,x_0,0+)\le Cr^\de,\quad 0<r<r_0
$$
for some $\de=\de(n,\al,\ka,\eta)>0$.
\end{lemma}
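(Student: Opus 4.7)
The plan is to derive, from the epiperimetric inequality and the Weiss monotonicity formula, a Gronwall-type differential inequality $rW'(r) \ge \gamma\, e(r) - Cr^\al$ for $e(r) := W(\uu,x_0,x_0,r) - W(\uu,x_0,x_0,0+)$ (with $\gamma > 0$ universal), and then integrate to get $e(r) \le Cr^\de$. Since $x_0 \in \cR_\uu$, Lemma~\ref{lem:blowup-exist} produces a blowup $\h_0 \in \Hf_{x_0}$; by Weiss monotonicity (Theorem~\ref{thm:Weiss}) and $C^1_{\loc}$-convergence of the rescalings, $W(\uu,x_0,x_0,0+) = M_{x_0}(\h_0) =: \Theta_{x_0}$, and $e(r) \ge 0$.

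Applying the epiperimetric inequality (Theorem~\ref{epi}) with $\h = \h_0$ to $\cc = \cc_{x_0,r}$ (the $W^{1,2}$-closeness holds for $r$ small by the blowup convergence) produces a competitor $\vv_r$ satisfying $\vv_r = \uu_{x_0,r}$ on $\p B_1$ and $M_{x_0}(\vv_r) \le (1-\eta)M_{x_0}(\cc_{x_0,r}) + \eta\Theta_{x_0}$. Rescaling $\vv_r$ back to a competitor on $B_r(x_0)$ and plugging into the frozen-coefficient almost-minimality~\eqref{eq:alm-min-frozen}, together with the optimal-growth control on $\int_{\p B_1}|\uu_{x_0,r}|^2$ and $M_{x_0}(\vv_r)$ from Corollary~\ref{UB}, I obtain
\[
\eta\bigl[M_{x_0}(\uu_{x_0,r}) - \Theta_{x_0}\bigr] \le (1-\eta)\bigl[M_{x_0}(\cc_{x_0,r}) - M_{x_0}(\uu_{x_0,r})\bigr] + Cr^\al.
\]

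The key computation is then the identity
\[
(n+2\ka-2)\bigl[M_{x_0}(\cc_{x_0,r}) - M_{x_0}(\uu_{x_0,r})\bigr] = r\tfrac{d}{dr}M_{x_0}(\uu_{x_0,r}) - \int_{\p B_1}|\p_\nu\uu_{x_0,r} - \ka\,\uu_{x_0,r}|^2,
\]
which I would derive by differentiating $M_{x_0}(\uu_{x_0,r})$ in $r$ via the scaling $\uu_{x_0,r}(x) = r^{-\ka}\uu(x_0+rx)$ and then simplifying using the two $\ka$-homogeneous-replacement formulas $\int_{B_1}|\D\cc_{x_0,r}|^2 = (n+2\ka-2)^{-1}\int_{\p B_1}(|\p_\theta\uu_{x_0,r}|^2 + \ka^2|\uu_{x_0,r}|^2)$ and $\int_{B_1}F(x_0,\cc_{x_0,r}) = (n+2\ka-2)^{-1}\int_{\p B_1}F(x_0,\uu_{x_0,r})$ established in the proof of Theorem~\ref{thm:Weiss}. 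Dropping the nonnegative boundary term and converting $M_{x_0}(\uu_{x_0,r})$ and its $r$-derivative into $W$ and $W'$ up to $O(r^\al)$ errors (using $W(\uu,x_0,x_0,r) = e^{ar^\al}[M_{x_0}(\uu_{x_0,r}) + \ka br^\al\int_{\p B_1}|\uu_{x_0,r}|^2]$ and optimal-growth boundedness), I combine with the rearranged epiperimetric inequality to arrive at $rW'(r) \ge \gamma\, e(r) - Cr^\al$ with $\gamma := \eta(n+2\ka-2)/(1-\eta)$.

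To conclude, I solve the ODE inequality: for any $\de \in (0, \min(\gamma, \al))$, using $e\ge0$ to replace $\gamma$ by $\de$ when necessary, the bound $(e/r^\de)' \ge -Cr^{\al-\de-1}$ holds and is integrable on $(0,r_1]$, so integrating from $r$ to a fixed small $r_1$ (with $e(r_1)$ controlled by optimal growth) yields $e(r) \le Cr^\de$, proving the lemma with $\de = \de(n,\al,\ka,\eta) > 0$. The main obstacle is the careful derivation of the key identity above together with the controlled propagation of the $O(r^\al)$ gauge-function error terms throughout; these must be shown to be strictly lower-order than the epiperimetric gain in order to preserve the Gronwall inequality, which is precisely why the decay exponent $\de$ must depend on both $\eta$ (through $\gamma$) and $\al$.
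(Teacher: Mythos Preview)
Your proposal is correct and follows essentially the same route as the paper. Both arguments produce the differential inequality $e'(r)\ge \frac{(n+2\ka-2)\eta}{1-\eta}\,\frac{e(r)}{r}-Cr^{\al-1}$ and integrate it identically; the only difference is organizational. The paper differentiates $e(r)$ directly, converts the resulting boundary integral into $(n+2\ka-2)M_{x_0}(\cc_r)$ via the homogeneity of $\cc_r$, and then invokes the epiperimetric inequality together with almost-minimality to bound $M_{x_0}(\cc_r)$ from below by $\frac{1}{1-\eta}[(1+O(r^\al))^{-1}W(\uu,r)-\eta\,W(\uu,0+)]$. You instead isolate the exact scaling identity $(n+2\ka-2)[M_{x_0}(\cc_{x_0,r})-M_{x_0}(\uu_{x_0,r})]=r\frac{d}{dr}M_{x_0}(\uu_{x_0,r})-\int_{\p B_1}|\p_\nu\uu_{x_0,r}-\ka\uu_{x_0,r}|^2$ (equivalently $M_{x_0}(\uu_{x_0,r})=W^0(\uu,x_0,x_0,r)$), apply the epiperimetric and almost-minimizing inequalities first, and only afterwards pass to $W$ with $O(r^\al)$ corrections. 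The two computations are algebraically equivalent; your packaging is arguably a bit cleaner since it separates the pure scaling identity from the places where the gauge function enters.
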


\begin{proof}
For simplicity we may assume $x_0=0$ and write $\uu_r=\uu_{0,r}$, $\cc_r=\cc_{0,r}$ and $W(\uu,r)=W(\uu,0,0,r)$. We define \begin{align*}
    e(r)&:=W(\uu,r)-W(\uu,0+)\\
    &=\frac{e^{ar^\al}}{r^{n+2\ka-2}}\int_{B_r}\left(|\D\uu|^2+2F(0,\uu)\right)-\frac{\ka(1-br^\al)e^{ar^\al}}{r^{n+2\ka-1}}\int_{\p B_r}|\uu|^2-W(\uu,0+),
\end{align*}
and compute \begin{align*}
    \frac{d}{dr}\left(\frac{e^{ar^\al}}{r^{n+2\ka-2}}\right)=-\frac{(n+2\ka-2)(1-Mr^\al)e^{ar^\al}}{r^{n+2\ka-1}}
\end{align*}
and \begin{align*}
    \frac{d}{dr}\left(\frac{\ka(1-br^\al)e^{ar^\al}}{r^{n+2\ka-1}}\right)=\frac{-\ka(1-br^\al)e^{ar^\al}(n+2\ka-1+O(r^\al))}{r^{n+2\ka}}.
\end{align*}
Then \begin{align*}
    e'(r)&=-\frac{(n+2\ka-2)(1-Mr^\al)e^{ar^\al}}{r^{n+2\ka-1}}\int_{B_r}\left(|\D\uu|^2+2F(0,\uu)\right)\\
    &\qquad+\frac{e^{ar^\al}}{r^{n+2\ka-2}}\int_{\p B_r}\left(|\D\uu|^2+2F(0,\uu)\right)\\
    &\qquad+\frac{\ka(1-br^\al)e^{ar^\al}(n+2\ka-1+O(r^\al))}{r^{n+2\ka}}\int_{\p B_r}|\uu|^2\\
    &\qquad-\frac{\ka(1-br^\al)e^{ar^\al}}{r^{n+2\ka-1}}\int_{\p B_r}\left(2\uu\p_\nu\uu+\frac{n-1}{r}|\uu|^2\right)\\
    &\ge -\frac{n+2\ka-2}r\left(e(r)+\frac{\ka(1-br^\al)e^{ar^\al}}{r^{n+2\ka-1}}\int_{\p B_r}|\uu|^2+W(\uu,0+)\right)\\
    &\qquad+\frac{e^{ar^\al}}r\bigg[\frac1{r^{n+2\ka-3}}\int_{\p B_r}\left(|\D\uu|^2+2F(0,\uu)\right)+\frac{2\ka^2+O(r^\al)}{r^{n+2\ka-1}}\int_{\p B_r}|\uu|^2\\
    &\qquad\qquad\qquad-\frac{2\ka(1-br^\al)}{r^{n+2\ka-2}}\int_{\p B_r}\uu\p_\nu\uu\bigg]\\
    &\ge-\frac{n+2\ka-2}r\left(e(r)+W(\uu,0+)\right)\\
    &\qquad+\frac{(1-br^\al)e^{ar^\al}}r\bigg[\frac1{r^{n+2\ka-3}}\int_{\p B_r}\left(|\D\uu|^2+2F(0,\uu)\right)\\
    &\qquad\qquad\qquad\qquad+\frac{\ka(2-n)+O(r^\al)}{r^{n+2\ka-1}}\int_{\p B_r}|\uu|^2-\frac{2\ka}{r^{n+2\ka-2}}\int_{\p B_r}\uu\p_\nu\uu\bigg].
\end{align*}
To simplify the last term, we observe that $\uu_r=\cc_r$ and $\p_\nu\cc_r=\ka\cc_r$ on $\p B_1$ and that $|\D\cc_r|^2+2F(0,\cc_r)$ is homogeneous of degree $2\ka-2$, and obtain
 \begin{align*}
    &\int_{\p B_r}\left(\frac1{r^{n+2\ka-3}}\left(|\D\uu|^2+2F(0,\uu)\right)+\frac{\ka(2-n)+O(r^\al)}{r^{n+2\ka-1}}|\uu|^2-\frac{2\ka}{r^{n+2\ka-2}}\uu\p_\nu\uu\right)\\
    &=\int_{\p B_1}\left(|\D\uu_r|^2+2F(0,\uu_r)+(\ka(2-n)+O(r^\al))|\uu_r|^2-2\ka\uu_r\p_\nu\uu_r\right)\\
    &=\int_{\p B_1}\left(|\p_\nu\uu_r-\ka\uu_r|^2+|\p_\theta\uu_r|^2+2F(0,\uu_r)-\left(\ka(n+\ka-2)+O(r^\al)\right)|\uu_r|^2\right)\\
    &\ge \int_{\p B_1}\left(|\p_\theta\cc_r|^2+2F(0,\cc_r)-\left(\ka(n+\ka-2)+O(r^\al)\right)|\cc_r|^2\right)\\
    &=\int_{\p B_1}\left(|\D\cc_r|^2+2F(0,\cc_r)-\left(\ka(n+2\ka-2)+O(r^\al)\right)|\cc_r|^2\right)\\
    &=(n+2\ka-2)\left[\int_{B_1}\left(|\D\cc_r|^2+2F(0,\cc_r)\right)-(\ka+O(r^\al))\int_{\p B_1}|\cc_r|^2\right]\\
    &=(n+2\ka-2)M_{0}(\cc_r)+O(r^\al)\int_{\p B_1}|\uu_r|^2.
\end{align*}
Thus \begin{align}\label{eq:diff-ineq}\begin{split}
    e'(r)&\ge-\frac{n+2\ka-2}r\left(e(r)+W(\uu,0+)\right)+\frac{(1-br^\al)e^{ar^\al}(n+2\ka-2)}rM_{0}(\cc_r)\\
    &\qquad+\frac{O(r^\al)}r\int_{\p B_1}|\uu_r|^2.
\end{split}\end{align}
We want to estimate $M_0(\cc_r)$. From the assumption that the epiperimetric inequality holds for $\cc_r$, we have $M_{0}(\vv^r)\le(1-\eta)M_{0}(\cc_r)+\eta M_{0}(\h^r)$ for some $\h^r\in\Hf_0$ and $\vv^r\in W^{1,2}(B_1)$ with $\vv^r=\cc_r$ on $\p B_1$. In addition, $0\in\mathcal{R}_\uu$ ensures that there exists a sequence $t_j\to 0+$ such that $\uu_{t_j}\to\h_0$ in $C^1_{\loc}(\R^n)$ for some $\h_0\in \Hf_0$. Then
\begin{align*}
    W(\uu,0+)&=\lim_{j\to\infty}W(\uu,t_j)\\
    &=\lim_{j\to\infty}e^{at_j^\al}\left[\int_{B_1}\left(|\D\uu_{t_j}|^2+2F(0,\uu_{t_j})\right)-\ka(1-bt_j^\al)\int_{\p B_1}|\uu_{t_j}|^2\right]\\
    &=M_{0}(\h_{0})=M_{0}(\h^r).
\end{align*}
Here, the last equality holds since both $\h_j$ and $\h^r$ are elements in $\Hf_0$. By the epiperimetric inequality and the almost-minimality of $\uu_r$,
\begin{align*}
    (1-\eta)M_{0}(\cc_r)+\eta W(\uu,0+)
    &\ge M_{0}(\vv^r)=\int_{B_1}\left(|\D\vv^r|^2+2F(0,\vv^r)\right)-\ka\int_{\p B_1}|\vv^r|^2\\
    &\ge \frac1{1+Mr^\al}\int_{B_1}\left(|\D\uu_r|^2+2F(0,\uu_r)\right)-\ka\int_{\p B_1}|\uu_r|^2\\
    &=\frac{e^{-ar^\al}}{1+Mr^\al}W(\uu,r)+O(r^\al)\int_{\p B_1}|\uu_r|^2.
    \end{align*}
We rewrite it as \begin{align*}
    M_{0}(\cc_r)\ge\frac{\frac{e^{-ar^\al}}{1+Mr^\al}W(\uu,r)-\eta W(\uu,0+)}{1-\eta}+O(r^\al)\int_{\p B_1}|\uu_r|^2
\end{align*}
and, combining this with \eqref{eq:diff-ineq}, obtain 
\begin{align*}
    e'(r)&\ge-\frac{n+2\ka-2}r\left(e(r)+W(\uu,0+)\right)\\
    &\qquad+\frac{(1-br^\al)e^{ar^\al}(n+2\ka-2)}r\left(\frac{\frac{e^{-ar^\al}}{1+Mr^\al}W(\uu,r)-\eta W(\uu,0+)}{1-\eta}\right)\\
    &\qquad+\frac{O(r^\al)}r\int_{\p B_1}|\uu_r|^2.
  \end{align*}
Note that from Theorem~\ref{thm:opt-growth}, there is a constant $C>0$ such that $$
W(\uu,0+)\le W(\uu,r)\le C
$$
and $$
\int_{\p B_1}|\uu_r|^2=\frac1{r^{n+2\ka-1}}\int_{\p B_r}|\uu|^2\le C
$$
for small $r>0$. Then \begin{align*}
    e'(r)&\ge -\frac{n+2\ka-2}r\left(e(r)+W(\uu,0+)\right)+\frac{n+2\ka-2}r\left(\frac{W(\uu,r)-\eta W(\uu,0+)}{1-\eta}\right)\\
    &\qquad+\frac{O(r^\al)}r\left(W(\uu,r)+W(\uu,0+)+\int_{\p B_1}|\uu_r|^2\right)\\
    &\ge -\frac{n+2\ka-2}re(r)+\frac{n+2\ka-2}r\left(\frac{W(\uu,r)-W(\uu,0+)}{1-\eta}\right)-C_1r^{\al-1}\\
    &=\left(\frac{(n+2\ka-2)\eta}{1-\eta}\right)\frac{e(r)}r-C_1r^{\al-1}.
\end{align*}
Now, take $\de=\de(n,\al,\ka,\eta)$ such that $0<\de<\min\left\{\frac{(n+2\ka-2)\eta}{1-\eta},\al \right\}$. Using the differential inequality above for $e(r)$ and that $e(r)=W(\uu,r)-W(\uu,0+)\ge 0$, we have for $0<r<r_0$ \begin{align*}
    \frac{d}{dr}\left[e(r)r^{-\de}+\frac{C_1}{\al-\de}r^{\al-\de}\right]&=r^{-\de}\left[e'(r)-\frac\de re(r)\right]+C_1r^{\al-\de-1}\\
    &\ge r^{-\de}\left[\left(\frac{(n+2\ka-2)\eta}{1-\eta}-\de\right)\frac{e(r)}r-C_1r^{\al-1}\right]+C_1r^{\al-\de-1}\\
    &\ge 0.
\end{align*}
Thus $$
e(r)r^{-\de}\le e(r_0)r_0^{-\de}+\frac{C_0}{\al-\de}r_0^{\al-\de},
$$
and hence we conclude that \begin{equation*}
W(\uu,r)-W(\uu,0+)=e(r)\le Cr^\de.
\qedhere\end{equation*}
\end{proof}

Now, we consider an auxiliary
function
$$
\phi(r):=e^{-(\ka b/\alpha)r^\alpha}r^{\ka},\quad
r>0,
$$
which is a solution of the differential equation
$$
\phi'(r)=\ka\,\phi(r)\frac{1-b r^\alpha}r,\quad r>0.
$$
For $x_0\in B_{1/2}$, we define the \emph{$\ka$-almost homogeneous rescalings} by
$$
\uu_{x_0, r}^{\phi}(x):=\frac{\uu(rx+x_0)}{\phi(r)},\quad x\in B_{1/(2r)}.
$$

\begin{lemma}
[Rotation estimate]\label{lem:rot-est}
Under the same assumption as in Lemma~\ref{lem:Weiss-decay},
$$
\int_{\p B_1}|\uu_{x_0,t}^\phi-\uu_{x_0,s}^\phi|\le Ct^{\de/2},\quad s<t<t_0.
$$
\end{lemma}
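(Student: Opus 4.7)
The plan is to differentiate $\uu_{x_0,r}^\phi$ in $r$ along $\partial B_1$, bound its $L^2$-norm pointwise in $r$ via the Weiss monotonicity formula, and then integrate in $r$ using a dyadic decomposition to absorb the logarithmic factor produced by a $1/r$ weight.

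First, the very choice of $\phi$ is engineered so that $\phi'(r)/\phi(r)=\kappa(1-br^\al)/r$; a direct computation then gives, for $x\in\p B_1$,
\begin{align*}
\frac{d}{dr}\uu_{x_0,r}^\phi(x)=\frac{1}{\phi(r)}\left[x\cdot\D\uu(rx+x_0)-\frac{\ka(1-br^\al)}{r}\uu(rx+x_0)\right].
\end{align*}
Since $x$ is the outward unit normal to $\p B_1$, under $y=rx+x_0$ (which carries $\p B_1$ onto $\p B_r(x_0)$ with $dS_y=r^{n-1}dS_x$) the bracketed quantity equals $\p_\nu\uu(y)-\frac{\ka(1-br^\al)}{r}\uu(y)$ where $\nu$ is the outward normal to $\p B_r(x_0)$.

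Changing variables and applying Theorem~\ref{thm:Weiss}, together with the elementary bound $\phi(r)^2\asymp r^{2\ka}$ for small $r$, I would get
\begin{align*}
\int_{\p B_1}\left|\frac{d}{dr}\uu_{x_0,r}^\phi\right|^2dS=\frac{r^{1-n}}{\phi(r)^2}\int_{\p B_r(x_0)}\left|\p_\nu\uu-\frac{\ka(1-br^\al)}{r}\uu\right|^2dS\le\frac{C}{r}\frac{d}{dr}W(\uu,x_0,x_0,r),
\end{align*}
valid for $0<r<t_0$. By Minkowski's inequality and Cauchy--Schwarz on $\p B_1$,
\begin{align*}
\int_{\p B_1}|\uu_{x_0,t}^\phi-\uu_{x_0,s}^\phi|\,dS\le\int_s^t\int_{\p B_1}\left|\frac{d}{dr}\uu_{x_0,r}^\phi\right|dS\,dr\le C\int_s^t\left(\frac{W'(\uu,x_0,x_0,r)}{r}\right)^{1/2}dr.
\end{align*}

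The final step, and the only subtle one, is handling the $1/r$ weight without incurring a $\log$. Set $r_k=2^{-k}t$ and estimate each dyadic piece by Cauchy--Schwarz:
\begin{align*}
\int_{r_{k+1}}^{r_k}\left(\frac{W'(r)}{r}\right)^{1/2}dr\le(\log 2)^{1/2}\bigl(W(\uu,x_0,x_0,r_k)-W(\uu,x_0,x_0,r_{k+1})\bigr)^{1/2}\le C\,r_k^{\de/2},
\end{align*}
where the last bound uses Lemma~\ref{lem:Weiss-decay} applied to $W(r_k)-W(0+)$. Summing the geometric series $\sum_{k\ge0}(2^{-k}t)^{\de/2}=Ct^{\de/2}$ (the tail $[s,r_K]$ with $r_K\le s<r_{K-1}$ is absorbed into the same geometric bound) yields the claim. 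The main obstacle is precisely this passage from the pointwise Weiss estimate to an absolute $t^{\de/2}$ control, which is why the power-decay in Lemma~\ref{lem:Weiss-decay} is essential; a mere monotonicity of $W$ would only give a $(\log(t/s))^{1/2}$ factor and would be insufficient.
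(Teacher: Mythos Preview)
Your proof is correct and follows essentially the same approach as the paper: differentiate $\uu_{x_0,r}^\phi$ in $r$, bound its $L^2(\p B_1)$-norm via the Weiss monotonicity formula, and then integrate in $r$ using Cauchy--Schwarz together with the decay from Lemma~\ref{lem:Weiss-decay}. The only difference is cosmetic: the paper first obtains the intermediate bound $C(\log(t/s))^{1/2}t^{\de/2}$ over the full interval and then invokes a ``standard dyadic argument,'' whereas you carry out that dyadic decomposition explicitly from the start.
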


\begin{proof}
Without loss of generality, assume $x_0=0$. For $\uu_r^\phi=\uu_{0,r}^\phi$ and $W(\uu,r)=W(\uu,0,0,r)$,
  \begin{align*}
    \frac{d}{dr}\uu_{r}^{\phi}(x)
    &=\frac{\nabla \uu(rx)\cdot x}{\phi(r)}-\frac{\uu(rx)[\phi'(r)/\phi(r)]}{\phi(r)}\\
    &=\frac{1}{\phi(r)}\left(\nabla \uu(rx)\cdot
      x-\frac{\ka(1-br^\alpha)}{r}\uu(rx)\right).
  \end{align*}
By Theorem~\ref{thm:Weiss}, we have for $0<r<t_0$ 
\begin{align*}
  &\left(\int_{\partial
    B_1}\left[\frac{d}{dr}\uu_r^\phi(\xi)\right]^2dS_\xi\right)^{1/2}\\
  &=
    \frac{1}{\phi(r)}\left(\int_{\partial B_1}\left|\nabla \uu(r\xi)\cdot \xi
    -\frac{\ka(1-br^\alpha)}{r}\uu(r \xi)\right|^2  dS_\xi\right)^{1/2}\\
  &=\frac{1}{\phi(r)}\left(\frac{1}{r^{n-1}}\int_{\partial
    B_r}\left|\D \uu(x)\cdot \nu-\frac{\ka(1-br^\alpha)}{r}
    \uu(x)\right|^2dS_x\right)^{1/2}\\
  &\leq\frac{1}{\phi(r)}\left(\frac{1}{r^{n-1}}\frac{r^{n+2\ka-2}}{e^{a
    r^\alpha}}\frac{d}{dr}W(\uu,r)\right)^{1/2}=\frac{e^{c
    r^\alpha}}{r^{1/2}}\left(\frac{d}{dr}W(\uu,r)\right)^{1/2}
    ,\quad c=\frac{\ka b}{\alpha}-\frac{a}{2}.
\end{align*}
Using this and Lemma~\ref{lem:Weiss-decay}, we can compute \begin{align*}
    \int_{\p B_1}|\uu_{t}^\phi-\uu_{s}^\phi|&\le \int_{\p B_1}\int_s^t\left|\frac{d}{dr}\uu_{r}^\phi\right|\,dr=\int_s^t\int_{\p B_1}\left|\frac{d}{dr}\uu_{r}^\phi\right|\,dr\\
    &\le C_n\int_s^t\left(\int_{\p B_1}\left|\frac{d}{dr}\uu_{r}^\phi\right|^2\right)^{1/2}\,dr\\
    &\le C_n\left(\int_s^t\frac1r\,dr\right)^{1/2}\left(\int_s^tr\int_{\p B_1}\left|\frac{d}{dr}\uu_{r}^\phi\right|^2\,dr\right)^{1/2}\\
    &\le C_ne^{ct^\al}\left(\log\frac ts\right)^{1/2}\left(\int_s^t\frac{d}{dr}W(\uu,r)\,dr\right)^{1/2}\\
    &\le C\left(\log\frac ts\right)^{1/2}(W(\uu,t)-W(\uu,s))^{1/2}\\
    &\le C\left(\log\frac ts\right)^{1/2}t^{\de/2},\quad 0<t<t_0.
\end{align*}
Now, by a standard dyadic argument, we conclude that \begin{equation*}
\int_{\p B_1}|\uu_{t}^\phi-\uu_{s}^\phi|\le Ct^{\de/2}.
\qedhere\end{equation*}
\end{proof}

The following are generalization of Lemma~4.4 and Proposition~4.6 in \cite{FotShaWei21} on $\ka$-homogeneous solutions from the case $\la_{\pm}=1$ to the constant coefficients $\la_\pm(x_0)$, which can be proved in similar fashion.

\begin{lemma}\label{lem:H-isolated}
For every $x_0\in B_{1/2}$, $\Hf_{x_0}$ is isolated (in the topology of $W^{1,2}(B_1)$) within the class of $\ka$-homogeneous solutions $\vv$ for $\Delta\vv=f(x_0,\vv)$.
\end{lemma}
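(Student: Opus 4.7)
I argue by contradiction, following the scheme of Lemma~4.4 and Proposition~4.6 of \cite{FotShaWei21}. Suppose $\{\vv_k\}$ is a sequence of $\ka$-homogeneous solutions to $\Delta\vv_k=f(x_0,\vv_k)$ with $\vv_k\to\h_\infty$ in $W^{1,2}(B_1)$, where $\h_\infty(x)=\be_{x_0}\max(x\cdot\nu_0,0)^\ka\mathbf{e}_0\in\Hf_{x_0}$, but $\vv_k\notin\Hf_{x_0}$ for every $k$. Standard elliptic bootstrap applied to the H\"older-continuous right-hand side $f(x_0,\cdot)$, combined with the $\ka$-homogeneity of $\vv_k$ (which propagates any uniform bound on the annulus $B_{3/4}\setminus B_{1/4}$ to all of $\R^n\setminus\{0\}$ by rescaling), upgrades the $W^{1,2}$ convergence to $C^{1,\al/2}_{\loc}(\R^n\setminus\{0\})$. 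Together with the non-degeneracy of Theorem~\ref{thm:nondeg}, this forces the positivity sets $\{|\vv_k|>0\}$ to converge to the half-space $\{x\cdot\nu_0>0\}$ locally in the Hausdorff sense.

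The main step---and the principal obstacle---is to classify $\ka$-homogeneous solutions of the system which are sufficiently close in $C^{1,\al/2}_{\loc}(\R^n\setminus\{0\})$ to a given half-space solution $\h_\infty$. Passing to angular variables by writing $\vv_k(x)=|x|^\ka\Phi_k(x/|x|)$ and exploiting the identity $\ka-2=\ka q$, the PDE reduces to a nonlinear sublinear problem on the unit sphere, $\Delta_{\Ss^{n-1}}\Phi_k+\ka(\ka+n-2)\Phi_k=\la_+(x_0)|\Phi_k^+|^{q-1}\Phi_k^+-\la_-(x_0)|\Phi_k^-|^{q-1}\Phi_k^-$, with $\Phi_k\to\be_{x_0}(\omega\cdot\nu_0)_+^\ka\mathbf{e}_0$ in $C^{1,\al/2}(\Ss^{n-1})$. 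Combining the non-degeneracy with the structure of the sublinear equation near the free boundary, the positivity set $\{|\Phi_k|>0\}\subset\Ss^{n-1}$ is identified with a spherical cap $\{\omega\cdot\nu_k>0\}$, $\nu_k\to\nu_0$; on this cap, uniqueness of $\ka$-homogeneous solutions vanishing on its equator is established via a sub/supersolution comparison exploiting the sublinearity $q<1$, forcing $\Phi_k=\be_{x_0}(\omega\cdot\nu_k)_+^\ka\mathbf{e}_k$ for some $\mathbf{e}_k\in\Ss^{m-1}$.

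The passage from the case $\la_\pm=1$ treated in \cite{FotShaWei21} to constant coefficients $\la_\pm(x_0)>0$ is purely cosmetic: the normalizing constant in the half-space solution becomes $\be_{x_0}=\la_+(x_0)^{\ka/2}(\ka(\ka-1))^{-\ka/2}$ as recorded in Subsection~\ref{subsec:notation}, and all the inequalities of \cite{FotShaWei21} persist with constants depending on $\la_0$ and $\la_1$. We conclude $\vv_k\in\Hf_{x_0}$ for large $k$, contradicting our standing assumption.
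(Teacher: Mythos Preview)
The proposal is correct and takes essentially the same approach as the paper: the paper gives no independent proof but simply states that the lemma is the generalization of Lemma~4.4 in \cite{FotShaWei21} from $\la_\pm=1$ to constant coefficients $\la_\pm(x_0)$, ``which can be proved in similar fashion,'' and your sketch traces exactly that argument while correctly observing that the passage to constant coefficients is cosmetic. One small remark: the non-degeneracy you need is for \emph{solutions} of $\Delta\vv=f(x_0,\vv)$ at arbitrary points of $\overline{\{|\vv|>0\}}$, so it is cleaner to cite the solution non-degeneracy from \cite{FotShaWei21} (or Lemma~\ref{lem:nondeg-int} here) rather than Theorem~\ref{thm:nondeg}, which is stated only at points of $\Gamma^\ka(\uu)$.
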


\begin{proposition}\label{prop:classi-hom-sol}
For $x_0\in B_{1/2}$, let $\vv\not\equiv0$ be a $\ka$-homogeneous solution of $\Delta\vv=f(x_0,\vv)$ satisfying $\{|\vv|=0\}^{\mathrm{0}}\neq\emptyset$. Then $M_{x_0}(\vv)\ge \mathcal{B}_{x_0}$ and equality implies $\vv\in\Hf_{x_0}$; here $\mathcal{B}_{x_0}=M_{x_0}(\h)$ for every $\h\in\Hf_{x_0}$.
\end{proposition}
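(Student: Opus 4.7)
The plan is to adapt the proof of \cite[Proposition~4.6]{FotShaWei21}, where $\la_\pm\equiv 1$, to the present setting with frozen coefficients $\la_\pm(x_0)$. Because $x_0$ is fixed, the equation $\Delta\vv=f(x_0,\vv)$ has constant positive coefficients, so all the variational and scaling identities from the $\la_\pm=1$ case carry over with only $1$ replaced by $\la_\pm(x_0)$ (and the normalization $\be_{x_0}$ in the definition of $\Hf_{x_0}$). The two main ingredients will be: (i) the classical (un-perturbed) Weiss monotonicity for exact solutions, and (ii) a dimension-reduction that exploits the $\ka$-homogeneity of $\vv$ about $0$ together with the existence of free boundary points away from $0$.

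Concretely, I would work with
\[
W_0(\vv,y,r):=\frac{1}{r^{n+2\ka-2}}\int_{B_r(y)}\bigl(|\D\vv|^2+2F(x_0,\vv)\bigr)-\frac{\ka}{r^{n+2\ka-1}}\int_{\p B_r(y)}|\vv|^2,
\]
which for an exact solution is nondecreasing in $r$ (the computation in Theorem~\ref{thm:Weiss} with $a=b=0$), and whose constancy characterizes $\ka$-homogeneity about $y$. By homogeneity of $\vv$ about $0$ one has $W_0(\vv,0,r)\equiv M_{x_0}(\vv)$. Since $\{|\vv|=0\}^0\neq\emptyset$ and $\vv\not\equiv0$, $\ka$-homogeneity of the two sets forces $\p\{|\vv|>0\}\cap\p B_1\neq\emptyset$; pick $z_0$ there. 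The identity $\vv_{z_0,r}(x)=\vv(z_0+rx)/r^\ka=\vv(z_0/r+x)$ (from $\ka$-homogeneity) shows $\vv_{z_0,r}\to\vv$ in $C^1_{\loc}$ as $r\to\infty$, hence $W_0(\vv,z_0,r)\to M_{x_0}(\vv)$. On the other hand, a subsequential blowup $\vv_0=\lim_{r_j\to 0+}\vv_{z_0,r_j}$ exists by the $C^{1,\al}$ estimates, is nontrivial by the non-degeneracy of solutions at their free boundary points, and is a $\ka$-homogeneous global solution with $M_{x_0}(\vv_0)=W_0(\vv,z_0,0+)\le M_{x_0}(\vv)$. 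A direct computation using $\vv(\la z)=\la^\ka\vv(z)$ gives $\vv_0(y+tz_0)=\vv_0(y)$ for all $t\in\R$, so $\vv_0$ descends to a $\ka$-homogeneous global solution on $\{z_0\}^\perp\cong\R^{n-1}$.

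Iterating this reduction---at each stage choosing a free boundary point on the reduced sphere and passing to a nontrivial blowup---yields eventually a one-dimensional $\ka$-homogeneous global solution with nonempty interior zero set. Solving the resulting ODE $w''=\la_+(x_0)|w^+|^{q-1}w^+-\la_-(x_0)|w^-|^{q-1}w^-$ componentwise gives that any such solution has the form $t\mapsto\be_{x_0}t_+^\ka\mathbf{e}$ (or its reflection) for a unit vector $\mathbf{e}\in\R^m$, that is, a member of $\Hf_{x_0}$, with Weiss energy $\mathcal{B}_{x_0}$. Chaining the Weiss-monotonicity inequalities across all reduction steps will deliver
\[
\mathcal{B}_{x_0}=M_{x_0}(\text{final blowup})\le\cdots\le M_{x_0}(\vv_0)\le M_{x_0}(\vv),
\]
which is the desired bound. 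In the equality case $M_{x_0}(\vv)=\mathcal{B}_{x_0}$, each inequality is saturated, so $W_0(\vv,z_0,r)$ is constant in $r$; the rigidity part of Weiss monotonicity then makes $\vv$ $\ka$-homogeneous about $z_0$, and combined with the isolatedness of $\Hf_{x_0}$ (Lemma~\ref{lem:H-isolated}) this will force $\vv\in\Hf_{x_0}$.

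The main obstacle I anticipate is making the dimension-reduction step rigorous in the vector-valued sublinear setting: at each stage one must ensure that the iterated blowup still has nonempty interior zero set (or already lies in $\Hf_{x_0}$), and the sublinear nonlinearity $f(x_0,\cdot)$ couples the positive and negative parts of each component only on the sign-fixed sets of that component, not across components, which complicates the one-dimensional classification. This is precisely the difficulty handled for $\la_\pm=1$ in \cite{FotShaWei21}, and the key point is that constant coefficients allow a clean componentwise reduction; I expect the same strategy to succeed here, with the rescaling constant $\be_{x_0}$ carrying the dependence on $\la_+(x_0)$.
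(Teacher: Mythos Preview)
Your approach is exactly what the paper indicates: it gives no proof of this proposition, simply deferring to \cite[Proposition~4.6]{FotShaWei21} and remarking that the argument carries over verbatim from $\la_\pm\equiv1$ to the frozen coefficients $\la_\pm(x_0)$, which is precisely the adaptation you sketch via the constant-coefficient Weiss monotonicity $W_0$ and Federer-type dimension reduction.

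One small correction in the equality case: invoking Lemma~\ref{lem:H-isolated} is not the right closing step. Isolatedness only tells you that no other $\ka$-homogeneous solution lies \emph{near} $\Hf_{x_0}$; it does not, by itself, place a given $\vv$ into $\Hf_{x_0}$. The standard way to finish (as in \cite{FotShaWei21}) is by the same dimension-reduction induction you use for the inequality: equality forces $W_0(\vv,z_0,\cdot)$ to be constant, so $\vv$ is $\ka$-homogeneous about $z_0$ as well as about $0$, hence translation-invariant along $z_0$; it therefore coincides with its own blowup at $z_0$, which descends to an $(n-1)$-dimensional $\ka$-homogeneous solution with nonempty interior zero set and Weiss energy $\mathcal{B}_{x_0}$. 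By induction on the dimension this reduced solution is a half-space solution, and lifting back gives $\vv\in\Hf_{x_0}$.
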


The proof of the next proposition can be obtained as in Proposition~4.5 in \cite{AleFotShaWei22}, by using Lemma~\ref{lem:H-isolated} and a continuity argument.

\begin{proposition}
\label{prop:all-blowup-R}
If $x_0\in\mathcal{R}_\uu$, then all blowup limits of $\uu$ at $x_0$ belong to $\Hf_{x_0}$.
\end{proposition}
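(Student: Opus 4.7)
Since $x_0\in\mathcal{R}_\uu$, by definition there exist $t_j\searrow 0$ and $\h_0\in\Hf_{x_0}$ with $\uu_{x_0,t_j}\to\h_0$ in $C^1_{\loc}(\R^n)$, and Weiss monotonicity (Theorem~\ref{thm:Weiss}) guarantees that $W(\uu,x_0,x_0,0+)=M_{x_0}(\h_0)=\mathcal{B}_{x_0}$. My plan is first to upgrade this subsequential convergence to a uniform $W^{1,2}(B_1)$-closeness of $\uu_{x_0,r}$ to $\Hf_{x_0}$ for every sufficiently small $r>0$, so that the epiperimetric inequality of Theorem~\ref{epi} applies to all the $\ka$-homogeneous replacements $\cc_{x_0,r}$ along the entire scale range. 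Then Lemma~\ref{lem:Weiss-decay} and Lemma~\ref{lem:rot-est} will yield uniqueness of the blowup, forcing every blowup to equal $\h_0\in\Hf_{x_0}$.

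For the uniform closeness, I would let $\de^*>0$ be the isolation constant of $\Hf_{x_0}$ provided by Lemma~\ref{lem:H-isolated} and argue by contradiction. If $\dist_{W^{1,2}(B_1)}(\uu_{x_0,s_j},\Hf_{x_0})\ge\de^*/2$ along some $s_j\searrow 0$, then, since $r\mapsto\uu_{x_0,r}$ is continuous in $W^{1,2}(B_1)$ for $r\in(0,1/2)$ (by the $C^{1,\al/2}$ bound of Corollary~\ref{UB}) and the distance vanishes along $t_j\searrow 0$, the intermediate value theorem produces $r_j\searrow 0$ with $\dist_{W^{1,2}(B_1)}(\uu_{x_0,r_j},\Hf_{x_0})=\de^*/4$. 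Exactly as in Lemma~\ref{lem:blowup-exist}, a subsequence of $\uu_{x_0,r_j}$ then converges in $C^1_{\loc}(\R^n)$, and hence in $W^{1,2}(B_1)$, to a $\ka$-homogeneous global solution $\vv_0$ of $\Delta\vv=f(x_0,\vv)$ with $\dist_{W^{1,2}(B_1)}(\vv_0,\Hf_{x_0})=\de^*/4\in(0,\de^*)$, contradicting Lemma~\ref{lem:H-isolated}. Trace estimates, together with the fact that elements of $\Hf_{x_0}$ are themselves $\ka$-homogeneous, then transfer this closeness from $\uu_{x_0,r}$ to $\cc_{x_0,r}$, so that the hypothesis of Theorem~\ref{epi} is met for every $\cc_{x_0,r}$ with $r<r_1$.

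With this uniform closeness, Lemma~\ref{lem:Weiss-decay} yields $W(\uu,x_0,x_0,r)-W(\uu,x_0,x_0,0+)\le Cr^\de$, and Lemma~\ref{lem:rot-est} shows that the traces $\uu_{x_0,r}^\phi\big|_{\p B_1}$ form a Cauchy family in $L^1(\p B_1)$ as $r\to 0^+$, with a unique limit $\ww_0$. Any blowup $\vv_0$ of $\uu$ at $x_0$ is a $C^1_{\loc}$-limit of some $\uu_{x_0,s_j}$ and, since $\phi(r)/r^\ka\to 1$, also the $L^1(\p B_1)$-limit of $\uu_{x_0,s_j}^\phi\big|_{\p B_1}$; hence $\vv_0|_{\p B_1}=\ww_0$. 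Being $\ka$-homogeneous by Lemma~\ref{lem:blowup-exist}, $\vv_0$ is determined by its boundary trace, so the blowup is unique; since $\h_0$ is one such blowup, every blowup equals $\h_0\in\Hf_{x_0}$. I expect the hard part to be the continuity/isolation step, since it requires both the $W^{1,2}(B_1)$-continuity of $r\mapsto\uu_{x_0,r}$ for $r>0$ and the guarantee that limits of rescalings at scales \emph{not} lying on the chosen subsequence $t_j$ remain genuine $\ka$-homogeneous global solutions of $\Delta\vv=f(x_0,\vv)$, which is exactly what makes Lemma~\ref{lem:H-isolated} applicable.
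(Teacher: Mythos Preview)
Your continuity/isolation argument in the second paragraph is correct and is exactly the approach the paper indicates (it cites Proposition~4.5 in \cite{AleFotShaWei22} and says ``using Lemma~\ref{lem:H-isolated} and a continuity argument''). However, you have done substantially more work than the proposition requires. Once you establish that $\dist_{W^{1,2}(B_1)}(\uu_{x_0,r},\Hf_{x_0})<\de^*/2$ for all small $r>0$, you are already finished: any blowup $\vv_0$ is the $C^1_{\loc}$ (hence $W^{1,2}(B_1)$) limit of rescalings $\uu_{x_0,s_j}$, so $\dist_{W^{1,2}(B_1)}(\vv_0,\Hf_{x_0})\le\de^*/2<\de^*$; since $\vv_0$ is a $\ka$-homogeneous solution of $\Delta\vv=f(x_0,\vv)$ by Lemma~\ref{lem:blowup-exist}, the isolation in Lemma~\ref{lem:H-isolated} forces $\vv_0\in\Hf_{x_0}$.

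Your third paragraph, which passes through the epiperimetric inequality, Lemma~\ref{lem:Weiss-decay}, and the rotation estimate of Lemma~\ref{lem:rot-est}, actually proves the stronger statement that the blowup at $x_0$ is \emph{unique}. This is not needed here and is what the paper establishes separately later on (Lemma~\ref{lem:rescaling-blowup-est}), after first using the present proposition to show $\mathcal R_\uu$ is relatively open (Lemma~\ref{lem:rel-open}) and then verifying the epiperimetric hypothesis uniformly on compact subsets (Lemma~\ref{lem:c-H-unif}). Your route is not circular, since Lemmas~\ref{lem:Weiss-decay} and~\ref{lem:rot-est} are conditional on the epiperimetric hypothesis and you supply that hypothesis directly from your closeness step; it is simply longer than necessary for this proposition.
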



\section{Regularity of the regular set}\label{sec:reg-set}

In this last section we establish one of the main result in this paper, the $C^{1,\gamma}$-regularity of the regular set $\mathcal{R}_\uu$.

We begin by showing that $\mathcal{R}_\uu$ is an open set in $\Gamma(\uu)$.

\begin{lemma}\label{lem:rel-open}
$\mathcal{R}_\uu$ is open relative to $\Gamma(\uu)$.
\end{lemma}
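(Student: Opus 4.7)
Let $x_0 \in \mathcal{R}_\uu \cap B_{1/2}$; the goal is to find $\delta>0$ with $\Gamma(\uu)\cap B_\delta(x_0)\subset \mathcal{R}_\uu$. The plan is to use the Weiss-type monotonicity formula (Theorem~\ref{thm:Weiss}) combined with the classification of $\ka$-homogeneous solutions (Proposition~\ref{prop:classi-hom-sol}) and the isolation of the half-space class (Lemma~\ref{lem:H-isolated}), via an upper-semicontinuity argument for the Weiss energy.

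Since $x_0\in\mathcal{R}_\uu$, Proposition~\ref{prop:all-blowup-R} gives that every blowup at $x_0$ lies in $\Hf_{x_0}$, so $W(\uu,x_0,x_0,0+)=\mathcal{B}_{x_0}$. By Theorem~\ref{thm:Weiss}, $r\mapsto W(\uu,y,y,r)$ is nondecreasing, and for each fixed (small) $r$ the map $y\mapsto W(\uu,y,y,r)$ is continuous by the $C^{1,\al/2}$-regularity of $\uu$ and the H\"older continuity of $\la_\pm$. Hence $y\mapsto W(\uu,y,y,0+)$ is upper semicontinuous as an infimum of continuous functions. The quantity $\mathcal{B}_y$ is also continuous in $y$ since $\be_y$ depends continuously on $\la_+(y)$.

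Next I would show $\Gamma(\uu)\cap B_\delta(x_0)\subset \Gamma^\ka(\uu)$ for small $\delta$. By Lemma~\ref{lem:rot-est} combined with Lemma~\ref{lem:H-isolated}, the whole family $\uu_{x_0,r}^\phi$ converges as $r\to 0^+$ to a unique profile $\h_{x_0}\in\Hf_{x_0}$, with an explicit rate; combined with the uniform $C^{1,\al/2}$ bounds of Corollary~\ref{UB}, this upgrades to $C^1_\loc$-convergence. Transferring this closeness to the half-space profile to nearby free boundary points $y$ yields $\uu(x)=O(|x-y|^\ka)$ at $y$, so $y\in\Gamma^\ka(\uu)$. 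Lemma~\ref{lem:blowup-exist} then produces a nonzero $\ka$-homogeneous blowup $\vv_y$ of $\uu$ at $y$ solving $\Delta\vv_y=f(y,\vv_y)$, and the same half-space proximity forces the zero set of $\vv_y$ to have nonempty interior.

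Applying Proposition~\ref{prop:classi-hom-sol} at $y$ gives $M_y(\vv_y)\geq\mathcal{B}_y$, while the definition of the blowup together with upper semicontinuity and continuity of $\mathcal{B}$ yield
\[
\mathcal{B}_y \leq M_y(\vv_y) = W(\uu,y,y,0+) \leq W(\uu,x_0,x_0,0+)+o(1) = \mathcal{B}_{x_0}+o(1) = \mathcal{B}_y+o(1)
\]
as $y\to x_0$. To conclude $\vv_y\in\Hf_y$, I would argue by contradiction: if $y_j\to x_0$ with $y_j\in\Gamma(\uu)\setminus\mathcal{R}_\uu$, then Theorem~\ref{thm:opt-growth} and standard elliptic theory give, along a subsequence, $\vv_{y_j}\to\vv_\infty$ in $C^1_\loc$ with $\vv_\infty$ a nonzero $\ka$-homogeneous solution of $\Delta\vv_\infty=f(x_0,\vv_\infty)$ satisfying $M_{x_0}(\vv_\infty)=\mathcal{B}_{x_0}$ and having zero set of nonempty interior. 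Proposition~\ref{prop:classi-hom-sol} then forces $\vv_\infty\in\Hf_{x_0}$; the isolation Lemma~\ref{lem:H-isolated}, together with the continuous dependence of $f(y,\cdot)$ on $y$, forces $\vv_{y_j}\in\Hf_{y_j}$ for large $j$, contradicting $y_j\notin\mathcal{R}_\uu$. The main obstacles are (i) verifying that free boundary points near $x_0$ lie in $\Gamma^\ka(\uu)$ with blowups having fat zero sets, and (ii) transferring the isolation property from $\Hf_{x_0}$ to $\Hf_{y_j}$ for varying base points, which requires carefully exploiting the continuity of the nonlinearity in $y$.
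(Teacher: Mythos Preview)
Your outline contains a genuine gap at the step ``Transferring this closeness to the half-space profile to nearby free boundary points $y$ yields $\uu(x)=O(|x-y|^\ka)$ at $y$, so $y\in\Gamma^\ka(\uu)$.'' Knowing that $\uu_{x_0,r}^\phi\to\h_{x_0}$ with a rate controls $\uu$ on annuli around $x_0$, but gives no information at scales $r\ll |y-x_0|$ around a nearby free boundary point $y$. Concretely, if $|y-x_0|=d$, the closeness of $\uu_{x_0,d}$ to $\h_{x_0}$ says nothing about $\uu_{y,r}$ for $r\ll d$, so neither $y\in\Gamma^\ka(\uu)$ nor the existence of a $\ka$-homogeneous blowup $\vv_y$ at $y$ is established. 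Since your contradiction argument at the end also starts from the blowups $\vv_{y_j}$, the whole scheme rests on this unproved membership. There is a secondary logical issue: invoking Lemma~\ref{lem:rot-est} at $x_0$ presupposes that the epiperimetric inequality applies to $\cc_{x_0,r}$ for small $r$, which is the content of Lemma~\ref{lem:c-H-unif} (for $C_h=\{x_0\}$) and is proved only after the present lemma in the paper; this is fixable but should be flagged.

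The paper's proof circumvents exactly this difficulty by never taking blowups at the hypothetical points $x_i\in\Gamma(\uu)\setminus\mathcal R_\uu$. Instead it selects, for each $x_i$, an \emph{intermediate} scale $r_i$ at which $\dist(\uu_{x_i,r_i},\Hf_{x_0})=\e_1$ (using that the distance is small at scale $t_j$ inherited from $x_0$ and, by isolation, necessarily large at some smaller scale $\tau_j$ since $x_i\notin\mathcal R_\uu$). The limit $\uu^*$ of $\uu_{x_i,r_i}$ is then a global solution of $\Delta\uu^*=f(x_0,\uu^*)$ that is a priori neither homogeneous nor known to have $\ka$-growth; the paper works further (Steps~4--5) to locate a free boundary point $z_0$ of $\uu^*$, prove $O(r^\ka)$ growth of $\uu^*$ there, and use the Weiss energy chain $W^0(\uu^*,z_0,x_0,0+)\le W(\uu,x_0,x_0,0+)=\mathcal B_{x_0}$ together with Proposition~\ref{prop:classi-hom-sol} to force $\uu^*\in\Hf_{x_0}$, contradicting $\dist(\uu^*,\Hf_{x_0})=\e_1$. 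If you want to keep an upper-semicontinuity style argument, you must first supply an independent proof that $\Gamma(\uu)\cap B_\delta(x_0)\subset\Gamma^\ka(\uu)$ for small $\delta$; this is not a formality for almost minimizers, and it is precisely the obstacle the paper's intermediate-scale argument is designed to avoid.
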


\begin{proof}
\emph{Step 1.} For points $y\in B_{1/2}$, we let $\mathbb A_{y}$ be a set of $\ka$-homogeneous solutions $\vv$ of $\Delta\vv=f(y,\vv)$ satisfying $\vv\not\in\Hf_{y}$, and define $$
\rho_{y}:=\inf\{\|\h-\vv\|_{C^{1}(\overline{B_1})}\,:\,\h\in\Hf_{y},\vv\in \mathbb A_{y}\}.
$$
From $\|\h-\vv\|_{C^1(\overline{B_1})}\ge c(n)\|\h-\vv\|_{W^{1,2}(B_1)}$, the isolation property of $\Hf_{x_0}$ in Lemma~\ref{lem:H-isolated} also holds in $C^1(\overline{B_1})$-norm, thus $\rho_y>0$ for every $y\in B_{1/2}$.

We claim that there is a universal constant $c_1>0$ such that $c_1<\rho_y<1/c_1$ for all $y\in B_{1/2}$. Indeed, the second inequality $\rho_y<1/c_1$ is obvious. For the first one, we assume to the contrary that $\rho_{y_i}\to0$ for a sequence $y_i\in B_{1/2}$. This gives sequences $\h_{y_i}\in\Hf_{y_i}$ and $\vv_{y_i}\in\mathbb A_{y_i}$ such that $\dist(\h_{y_i},\vv_{y_i})\to0$, where the distance is measured in $C^1(\overline{B_1})$-norm. Over a subsequence, we have $y_i\to y_0$ and, using that $\h_{y_i}$ and $\vv_{y_i}$ are uniformly bounded, $\h_{y_i}\to\h_{y_0}$ and $\vv_{y_i}\to\vv_{y_0}$ for some $\h_{y_0}\in\Hf_{y_0}$ and $\vv_{y_0}\in\mathbb A_{y_0}$. It follows that $\dist(\h_{y_0},\vv_{y_0})=\lim_{i\to\infty}\dist(\h_{y_i},\vv_{y_i})=0$, meaning that $\h_{y_0}=\vv_{y_0}$, a contradiction.

\medskip\noindent\emph{Step 2.} 
Towards a contradiction we assume that the statement of Lemma~\ref{lem:rel-open} is not true. Then we can find a point $x_0\in\mathcal{R}_\uu$ and a sequence of points $x_i\in\Gamma(\uu)\setminus\mathcal R_\uu$ such that $x_i\to x_0$.

For a small constant $\e_1=\e_1(n,q,M,c_1)>0$ to be specified later, we claim that there is a sequence $r_i\to 0$ and a subsequence of $x_i$, still denoted by $x_i$, such that \begin{align}
    \label{eq:dist-H}
    \dist(\uu_{x_i,r_i},\Hf_{x_0})=\e_1,
\end{align}
where $\uu_{x_i,r_i}(x)=\frac{\uu(x_i+r_ix)}{r_i^\ka}$ are the $\ka$-homogeneous rescalings. 

Indeed, since $x_0\in\mathcal R_\uu$, we can find a sequence $t_j\to0$ such that \begin{align}\label{eq:u-H-dist-1}
\dist(\uu_{x_0,t_j},\Hf_{x_0})<\e_1/2.
\end{align}
For each fixed $t_j$, we take a point from the above sequence $\{x_i\}_{i=1}^\infty$ (and redefine the point as $x_j$ for convenience) close to $x_0$ such that
\begin{align*}
    \dist(\uu_{x_j,t_j},\Hf_{x_0})\le \dist(\uu_{x_j,t_j},\uu_{x_0,t_j})+\dist(\uu_{x_0,t_j},\Hf_{x_0})<\e_1.
\end{align*}
On the other hand, using $x_j\not\in\mathcal{R}_\uu$ and Proposition~\ref{prop:all-blowup-R}, we see that there is $\tau_j<t_j$ such that $\dist(\uu_{x_j,\tau_j},\Hf_{x_j})>\rho_{x_j}/2$. Using the result in \emph{Step 1}, we have $\dist(\uu_{x_j,\tau_j},\Hf_{x_j})>c_1/2>2\e_1$ for small $\e_1$. We next want to show that for large $j$ \begin{align}
    \label{eq:u-H-dist-2}
    \dist(\uu_{x_j,\tau_j},\Hf_{x_0})\ge (3/2)\e_1.
\end{align}
For this aim, we let $\h_{x_0}\in \Hf_{x_0}$ be given. Then $\h_{x_j}:=\frac{\be_{x_j}}{\be_{x_0}}\h_{x_0}\in\Hf_{x_j}$ and $\frac{\be_{x_j}}{\be_{x_0}}=\left(\frac{\la_+(x_j)}{\la_+(x_0)}\right)^{\ka/2}\to1$ as $j\to\infty$. Thus $\dist(\h_{x_j},\h_{x_0})<\e_1/2$ for large $j$, and hence \begin{align*}
    \dist(\uu_{x_j,\tau_j},\h_{x_0})&\ge\dist(\uu_{x_j,\tau_j},\h_{x_j})-\dist(\h_{x_j},\h_{x_0})\\
    &>2\e_1-\e_1/2=(3/2)\e_1.
\end{align*}
This implies \eqref{eq:u-H-dist-2}.

Now, \eqref{eq:u-H-dist-1} and \eqref{eq:u-H-dist-2} ensure the existence of $r_j\in(\tau_j,t_j)$ such that $$
\dist(\uu_{x_j,r_j},\Hf_{x_0})=\e_1.
$$

\medskip\noindent\emph{Step 3.} \eqref{eq:dist-H} implies that $\{\uu_{x_i,r_i}\}$ is uniformly bounded in $C^1$-norm (so in $C^{1,\al/2}$-norm as well), thus we can follow the argument in \emph{Step 1-2} in the proof of Lemma~\ref{lem:blowup-exist} with $\uu_{x_i,r_i}$ in the place of $\uu_t$ to have that over a subsequence $$
\uu_{x_i,r_i}\to\uu^*\quad\text{in }C^1_{\loc}(\R^n)
$$
for some nonzero global solution $\uu^*$ of $\Delta\uu^*=f(x_0,\uu^*)$.

From \eqref{eq:dist-H} again, we may assume that $\|\uu_{x_i,r_i}-\h\|_{C^1(\overline{B_1})}\le 2\e_1$ for $\h(x)=\be_{x_0}(x^1_+)^\ka$. Then $|\uu_{x_i,r_i}|+|\D\uu_{x_i,r_i}|\le 2\e_1$ in $B_1\cap\{x_1\le 0\}$. By the nondegeneracy property, Lemma~\ref{lem:nondeg-int}, we know that $\int_{B_t(z)}\left(|\D\uu_{x_i,r_i}|^2+|\uu_{x_i,r_i}|^{q+1}\right)\ge \e_0t^{n+2\ka-2}$ for any $z\in\overline{\{|\uu_{x_i,r_i}|>0\}}$ and $B_t(z)\Subset B_1$. We can deduce from the preceding two inequalities that if $\e_1$ is small, then $\uu_{x_i,r_i}\equiv0$ in $B_1\cap\{x_1\le-1/4\}$. Therefore, the coincidence set $\{|\uu^*|=0\}$ has a nonempty interior, and there exist an open ball $D\Subset\{|\uu^*|=0\}$ and a point $z_0\in\p D\cap\p\{|\uu^*|>0\}$.

\medskip\noindent\emph{Step 4.} We claim that $\|\uu^*\|_{L^\infty(B_r(z_0))}=O(r^\ka)$.

In fact, we can proceed as in the proof of the sufficiency part of Theorem~1.2 in \cite{FotShaWei21}. In the theorem, they assume $\|\uu\|_{L^\infty(B_r)}=o(r^{\lfloor\ka\rfloor})$ and prove $\|\uu\|_{L^\infty(B_r)}=O(r^\ka)$. We want to show that the condition $\|\uu\|_{L^\infty(B_r)}=o(r^{\lfloor\ka\rfloor})$ can be replaced by $0\in\p D\cap\p\{|\uu|>0\}$, where $D$ is an open ball contained in $\{|\uu|=0\}$ (then it can be applied to $\uu^*$ in our case, and $\|\uu^*\|_{L^\infty(B_r(z_0))}=O(r^\ka)$ follows). Indeed, the growth condition on $\uu$ in the theorem is used only to prove the following: if $\tilde\uu_j(x)=P_j(x)+\Gamma_j(x)$ in $B_r$, where $\tilde\uu_j(x)=\frac{\uu(rx)}{jr_j^\ka}$ is a rescaling of $\uu$ at $0$, $P_j$ is a harmonic polynomial of degree $l\le \lfloor\ka\rfloor$ and $|\Gamma_j(x)|\le C|x|^{l+\e}$, $0<\e<1$, then $P_j\equiv0$. To see that $0\in\p D\cap\p\{|\uu|>0\}$ also gives the same result, we observe that it implies that $\tilde\uu_j=0$ in an open subset $A_r$ of the ball $B_r$. This is possible only when $P_j=\Gamma_j=0$ in $A_r$. Thus, by the unique continuation $P_j\equiv0$.

\medskip\noindent\emph{Step 5.} Consider the standard Weiss energy $$
W^0(\uu^*,z,y,s):=\frac1{s^{n+2\ka-2}}\int_{B_s(z)}\left(|\D\uu^*|^2+2F(y,\uu^*)\right)-\frac\ka{s^{n+2\ka-1}}\int_{\p B_s(z)}|\uu^*|^2.
$$
By the result in \emph{Step 4}, there exists a $\ka$-homogeneous blowup $\uu^{**}$ of $\uu^*$ at $z_0$ (i.e., $\uu^{**}(x)=\lim_{r\to0}\frac{\uu^*(rx+z_0)}{r^\ka}$ over a subsequence). From $z_0\in\p D$, $\uu^{**}$ should have a nonempty interior of the zero-set $\{|\uu^{**}|=0\}$ near the origin $0$, thus by Proposition~\ref{prop:classi-hom-sol}
$$
W^0(\uu^*,z_0,x_0,0+)=M_{x_0}(\uu^{**})\ge\mathcal{B}_{x_0}.
$$
Moreover, we observe that for any fixed $r\le 1$ \begin{align*}
    W^0(\uu^*,z_0,x_0,0+)&\le W^0(\uu^*,z_0,x_0,r)\le W(\uu^*,z_0,x_0,r)=\lim_{i\to\infty}W(\uu,x_i+r_iz_0,x_0,rr_i)\\
    &\le \lim_{i\to\infty}W(\uu,x_i+r_iz_0,x_0,s)=W(\uu,x_0,x_0,s)
\end{align*}
for any $s>0$. Taking $s\searrow0$ and using $W(\uu,x_0,x_0,0+)=\mathcal{B}_{x_0}$, we obtain that $W^0(\uu^*,z_0,x_0,r)=\mathcal{B}_{x_0}$ for $0<r<1$. Thus $\uu^*$ is a $\ka$-homogeneous function with respect to $z_0$.

Now, we can apply Proposition~\ref{prop:classi-hom-sol} to obtain that $\uu^*$ is a half-space solution with respect to $z_0$, i.e., $\uu^*(\cdot-z_0)\in\Hf_{x_0}$. Since $\uu_{x_i,r_i}$ satisfies $|\uu_{x_i,r_i}(0)|=0$ and the nondegeneracy $\int_{B_t}\left(|\D\uu_{x_i,r_i}|^2+|\uu_{x_i,r_i}|^{q+1}\right)\ge\e_0 t^{n+2\ka-2}$, $\uu^*$ also satisfies the similar equations, and thus $z_0=0$. This implies $\uu^*\in\Hf_{x_0}$, which contradicts \eqref{eq:dist-H}.
\end{proof}

\begin{lemma}\label{lem:c-H-unif}
Let $C_h$ be a compact subset of $\mathcal{R}_\uu$. For any $\e>0$, there is $r_0>0$ such that if $x_0\in C_h$ and $0<r<r_0$, then the $\ka$-homogeneous replacement $\cc_{x_0,r}$ of $\uu$ satisfies \begin{align}\label{eq:c-H-unif}
    \dist(\cc_{x_0,r},\Hf_{x_0})<\e,
\end{align}
where the distance is measured in the $C^1(\overline{B_1})$-norm.
\end{lemma}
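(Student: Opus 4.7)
The plan is to argue by contradiction: suppose there exist $\e_0 > 0$, $x_j\in C_h$, and $r_j\to 0+$ with $\dist(\cc_{x_j,r_j},\Hf_{x_j})\ge \e_0$, and pass to a subsequence with $x_j\to x_*\in C_h\subset \mathcal R_\uu$. Corollary~\ref{UB} bounds $\{\uu_{x_j,r_j}\}$ in $C^{1,\al/2}_{\loc}(\R^n)$, so via a diagonal argument (along a further subsequence) $\uu_{x_j,r_j}\to \uu_*$ in $C^1_{\loc}(\R^n)$, and consequently $\cc_{x_j,r_j}\to \cc_*$ in $C^1(\overline{B_1})$. Since the rescaled gauge $M(r_jR)^\al\to 0$, the almost-minimizing property transfers to the limit: $\uu_*$ is an energy minimizer, hence a global solution of $\Delta\uu_* = f(x_*,\uu_*)$ on $\R^n$, and $\uu_*\not\equiv 0$ by Theorem~\ref{thm:nondeg}.

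I next show that $\uu_*$ is $\ka$-homogeneous using Theorem~\ref{thm:Weiss}. For every fixed $s>0$, monotonicity combined with $W(\uu,x_j,x_j,0+)=\mathcal B_{x_j}\to \mathcal B_{x_*}$ (via Proposition~\ref{prop:all-blowup-R} and continuity of $\beta_x$) gives $W(\uu,x_j,x_j,sr_j)\ge \mathcal B_{x_j}$; conversely, for any small fixed $R$, $W(\uu,x_j,x_j,sr_j)\le W(\uu,x_j,x_j,R)\to W(\uu,x_*,x_*,R)\to \mathcal B_{x_*}$ as $R\to 0+$, using joint continuity of the Weiss functional in its basepoints. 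Passing to the limit, the rescaled Weiss energy converges to the classical $W_0(\uu_*,s)$ at frozen coefficient $x_*$, so $W_0(\uu_*,s)\equiv \mathcal B_{x_*}$; the standard computation then forces $x\cdot\D\uu_*-\ka\uu_*\equiv 0$, hence $\ka$-homogeneity, and consequently $\cc_* = \uu_*$.

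The main obstacle is the identification $\uu_*\in\Hf_{x_*}$. I plan to handle this by comparing directly with the unique blowup at $x_*$ through the identity $\uu_{x_j,r_j}(\xi) = \uu_{x_*,r_j}(\xi + z_j)$, $z_j := (x_j-x_*)/r_j$, splitting according to whether $\{z_j\}$ is bounded. The rotation estimate (Lemma~\ref{lem:rot-est}) at $x_*\in\mathcal R_\uu$, combined with $C^{1,\al/2}$-interpolation, upgrades the $L^1(\p B_1)$ convergence $\uu_{x_*,r}^\phi\to \h_{x_*}\in\Hf_{x_*}$ to $C^1_{\loc}(\R^n)$. In the bounded case $z_j\to z_*\in\R^n$, we obtain $\uu_*(\xi)=\h_{x_*}(\xi+z_*)$; since $\uu_*$ is $\ka$-homogeneous, one must have $z_*\cdot\nu_*=0$, giving $\uu_*=\h_{x_*}$. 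In the unbounded case $|z_j|\to\infty$, I set $\lambda_j:=r_j/|x_j-x_*|\to 0$ and $\hat z_j:=z_j/|z_j|\to \hat z_*\in \p B_1$ (up to subsequence), and rewrite
\begin{align*}
\uu_{x_j,r_j}(\xi) = \lambda_j^{-\ka}\uu_{x_*,|x_j-x_*|}(\hat z_j + \lambda_j\xi).
\end{align*}
The $C^1$-bound on $\uu_*$ forces $\hat z_*\cdot\nu_*\le 0$, while nondegeneracy of $\uu_*$ rules out $\hat z_*\cdot\nu_*<0$, so $\hat z_*\cdot\nu_*=0$. Taylor-expanding $\h_{x_*}$ about $\hat z_*$ and setting $c_*:=\lim (\hat z_j\cdot\nu_*)/\lambda_j$ (the values $\pm\infty$ are excluded respectively by $C^1$-boundedness and nondegeneracy) yields $\uu_*(\xi)=\beta_{x_*}\max(c_*+\xi\cdot\nu_*,0)^\ka \mathbf e_*$; $\ka$-homogeneity forces $c_*=0$, whence again $\uu_*=\h_{x_*}\in\Hf_{x_*}$.

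Finally, by continuity of $\beta_x$ in $x$, I can choose $\h_j\in\Hf_{x_j}$ with $\h_j\to \uu_*$ in $C^1(\overline{B_1})$, giving
\begin{align*}
\dist(\cc_{x_j,r_j},\Hf_{x_j})\le \|\cc_{x_j,r_j}-\h_j\|_{C^1(\overline{B_1})}\to 0,
\end{align*}
the desired contradiction. The delicate point is the unbounded case, where the basepoint $x_j$ escapes the natural blowup scale at $x_*$ and forces a careful Taylor analysis of the reference half-space solution to keep the limit a valid $\ka$-homogeneous global solution.
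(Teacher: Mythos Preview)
Your compactness setup and the $\ka$-homogeneity argument for $\uu_*$ are sound, and your use of Lemma~\ref{lem:rot-est} at the single point $x_*$ is not circular (the epiperimetric hypothesis there follows from Proposition~\ref{prop:all-blowup-R} by a pointwise compactness argument). The genuine gap is in the unbounded case $|z_j|\to\infty$. Writing $\uu_{x_*,|x_j-x_*|}=\h_{x_*}+E_j$, your identity gives
\[
\uu_{x_j,r_j}(\xi)=\beta_{x_*}\max\Bigl(\tfrac{\hat z_j\cdot\nu_*}{\lambda_j}+\xi\cdot\nu_*,0\Bigr)^\ka\mathbf e_*\;+\;\lambda_j^{-\ka}E_j(\hat z_j+\lambda_j\xi),
\]
and the whole Taylor analysis (including the exclusion of $c_*=\pm\infty$ and of $\hat z_*\cdot\nu_*<0$) hinges on the last term vanishing. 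But the rotation estimate only yields $\|E_j\|_{C^1(B_2)}\le C|x_j-x_*|^{\de'}$ for some small $\de'>0$, while $\lambda_j^{-\ka}=(|x_j-x_*|/r_j)^\ka$ can diverge at any rate relative to $|x_j-x_*|$, since in the unbounded regime there is no lower bound on $r_j$ in terms of $|x_j-x_*|$. Thus $\lambda_j^{-\ka}E_j$ is uncontrolled and the identification $\uu_*(\xi)=\beta_{x_*}\max(c_*+\xi\cdot\nu_*,0)^\ka\mathbf e_*$ is not justified.

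The paper avoids this two-scale difficulty altogether. Rather than identifying the limit directly, it uses a continuity trick together with the isolation property (Lemma~\ref{lem:H-isolated}): since $\dist(\uu_{x_j,r_j},\Hf_{x_j})\ge\e_0$ but $\dist(\uu_{x_j,t},\Hf_{x_j})\to 0$ as $t\to 0$, for each $\theta\in(0,1)$ one picks $t_j<r_j$ with $\dist(\uu_{x_j,t_j},\Hf_{x_j})=\theta\e_0$. The limit along $t_j$ is shown to be $\ka$-homogeneous using Dini's theorem on the monotone family $t\mapsto W(\uu,x_j,x_j,t)$ over the compact $C_h$, and then satisfies $\theta\e_0\le\dist(\uu_{x_0},\Hf_{x_0})\le 2\theta\e_0$, contradicting isolation for $\theta$ small. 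If you want to salvage your route, note that you have in fact already proved $M_{x_*}(\uu_*)=\mathcal B_{x_*}$; it would then suffice to show $\{|\uu_*|=0\}$ has nonempty interior and invoke Proposition~\ref{prop:classi-hom-sol}, rather than the bounded/unbounded dichotomy.
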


\begin{proof}
We claim that for any $\e>0$, there is $r_0>0$ such that $$
\dist(\uu_{x_0,r},\Hf_{x_0})<\e\quad\text{for any }x_0\in C_h\text{ and }0<r<r_0,
$$
which readily gives \eqref{eq:c-H-unif} (see the proof of Lemma~10 in \cite{DeSJeoSha21}).

Towards a contradiction we assume that there exist a constant $\e_0>0$ and sequences $x_j\in C_h$ (converging to $x_0\in C_h$) and $r_j\to0$ such that \begin{align*}
    \dist(\uu_{x_j,r_j},\Hf_{x_j})\ge\e_0.
\end{align*}
By a continuity argument, for each $\theta\in(0,1)$ we can find a sequence $t_j<r_j$ such that $$
\dist(\uu_{x_j,t_j},\Hf_{x_j})=\theta\e_0.
$$
By following the argument in \emph{Step 1-2} in the proof of Lemma~\ref{lem:blowup-exist} with $\uu_{x_j,t_j}$ in the place of $\uu_t$, we can show that up to a subsequence $$
\uu_{x_j,t_j}\to \uu_{x_0}\quad\text{in }C^1_{\loc}(\R^n;\R^m)
$$
for some nonzero global solution $\uu_{x_0}\in C^1_{\loc}(\R^n;\R^m)$ of
$\Delta\uu_{x_0}=f(x_0,\uu_{x_0})$. We remark that the blowup $\uu_{x_0}$ depends on the sequence $\{t_j\}$, thus on the choice of $0<\theta<1$.

From $\dist(\uu_{x_j,t_j},\Hf_{x_j})=\theta\e_0$, we can take $\h_{x_j}\in\Hf_{x_j}$ such that $\dist(\uu_{x_j,t_j},\h_{x_j})\le 2\theta\e_0.$ 
For each $j$ we define $\h_{x_0}^j:=\frac{\be_{x_0}}{\be_{x_j}}\h_{x_j}\in\Hf_{x_0}$. Since $\frac{\be_{x_0}}{\be_{x_j}}=\left(\frac{\la_+(x_0)}{\la_+(x_j)}\right)^{\ka/2}\to1$, $$
\dist(\h_{x_j},\h_{x_0}^j)\le o(|x_j-x_0|).
$$
Thus,\begin{align*}
    \dist(\uu_{x_0},\h_{x_0}^j)&\le \dist(\uu_{x_0},\uu_{x_j,t_j})+\dist(\uu_{x_j,t_j},\h_{x_j})+\dist(\h_{x_j},\h_{x_0}^j)\\
    &\le 2\theta\e_0+o(|x_j-x_0|),
\end{align*}
and hence \begin{align*}
    \dist(\uu_{x_0},\Hf_{x_0})\le\limsup_{j\to\infty}\dist(\uu_{x_0},\h_{x_0}^j)\le 2\theta\e_0.
\end{align*}
On the other hand, for each $h^{x_0}\in\Hf_{x_0}$ we let $\h^{x_j}:=\frac{\be_{x_j}}{\be_{x_0}}\h^{x_0}\in\Hf_{x_j}$ so that $\dist(\h^{x_j},\h^{x_0})\le o(|x_j-x_0|)$. Using $\dist(\uu_{x_j,t_j},\Hf_{x_j})=\theta\e_0$ again, we obtain \begin{align*}
    \dist(\uu_{x_0},\h^{x^0})&\ge\dist(\uu_{x_j,t_j},\h^{x_j})-\dist(\h^{x_j},\h^{x_0})-\dist(\uu_{x_j,t_j},\uu_{x_0})\\
    &\ge\theta\e_0-o(|x_j-x_0|),
\end{align*}
and conclude $$
\dist(\uu_{x_0},\Hf_{x_0})\ge\theta\e_0.
$$
Therefore, $$
\theta\e_0\le\dist(\uu_{x_0},\Hf_{x_0})\le2\theta\e_0.
$$
For $\theta>0$ small enough, this inequality contradicts the isolation property of $\Hf_{x_0}$ in Lemma~\ref{lem:H-isolated}, provided $\uu_{x_0}$ is homogeneous of degree $\ka$. 

To prove the homogeneity, we fix $0<r<R<\infty$ and follow the argument in \emph{Step 3} in Lemma~\ref{lem:blowup-exist} to obtain \begin{align}
    \label{eq:weiss-diff}
    \begin{split}
    &W(\uu,x_j,x_j,Rt)-W(\uu,x_j,x_j,rt)\\
    &\qquad\ge\int_r^R\frac1{\si^{n+2\ka}}\int_{\p B_\si}\left|x\cdot\D\uu_{x_j,t}-\ka\left(1-b(t\si)^\al\right)\uu_{x_j,t}\right|^2\,dS_xd\si
\end{split}\end{align}
for small $t$. Recall the standard Weiss energies $$
W^0(\uu,x_j,x_j,t)=\frac1{t^{n+2\ka-2}}\int_{B_t(x_j)}\left(|\D\uu|^2+2F(x_j,\uu)\right)-\frac\ka{t^{n+2\ka-1}}\int_{\p B_t(x_j)}|\uu|^2.
$$
We have $$
M_{x_j}(\uu_{x_j,t})=W^0(\uu,x_j,x_j,t)=W(\uu,x_j,x_j,t)+O(t^\al),
$$
where the second equality holds by Theorem~\ref{thm:opt-growth}. This, together with $x_j\in\cR_\uu$ and the monotonicity of $W(\uu,x_j,x_j,\cdot)$, gives that $$
W(\uu,x_j,x_j,t)\searrow\cB_{x_j}\quad\text{as }t\searrow0.
$$
Applying Dini's theorem gives that for any $\e>0$ there exists $t_0=t_0(\e)>0$ such that $$
\cB_{x_j}\le W(\uu,x_j,x_j,t)\le\cB_{x_j}+\e\quad\text{for any }t<t_0\text{ and }x_j\in C_h.
$$
Then, for large $j$, we have $\cB_{x_j}\le W(\uu,x_j,x_j,Rt_j)\le\cB_{x_j}+\e$, and thus $$
\cB_{x_0}\le\liminf_{j\to\infty}W(\uu,x_j,x_j,Rt_j)\le\limsup_{j\to\infty}W(\uu,x_j,x_j,Rt_j)\le\cB_{x_0}+\e.
$$
Taking $\e\searrow0$, we obtain that $$
\lim_{j\to\infty}W(\uu,x_j,x_j,Rt_j)=\cB_{x_0}.
$$
Similarly, we have $\lim_{j\to\infty}W(\uu,x_j,x_j,rt_j)=\cB_{x_0}$. They enable us to take $j\to\infty$ in \eqref{eq:weiss-diff} to get $$
0=\int_r^R\frac1{\si^{n+2\ka}}\int_{\p B_\si}|x\cdot\D\uu_{x_0}-\ka\uu_{x_0}|^2\,dS_xd\si.
$$
Since $0<r<R<\infty$ are arbitrary, we conclude that $x\cdot\D\uu_{x_0}-\ka\uu_{x_0}=0$, or $\uu_{x_0}$ is $\ka$-homogeneous in $\R^n$.
\end{proof}

\begin{lemma}\label{lem:rescaling-blowup-est}
Let $\uu$ be an almost minimizer in $B_1$, $C_h$ a compact subset of $\mathcal{R}_\uu$, and $\de$ as in Lemma~\ref{lem:Weiss-decay}. Then for every $x_0\in C_h$ there is a unique blowup $\uu_{x_0,0}\in\Hf_{x_0}$. Moreover, there exist $t_0>0$ and $C>0$ such that $$
\int_{\p B_1}|\uu^\phi_{x_0,t}-\uu_{x_0,0}|\le Ct^{\de/2}
$$
for all $0<t<t_0$ and $x_0\in C_h$.
\end{lemma}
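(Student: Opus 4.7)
The plan is to bootstrap from the rotation estimate in Lemma~\ref{lem:rot-est} together with the existence of blowups in Lemma~\ref{lem:blowup-exist} and Proposition~\ref{prop:all-blowup-R}, and then verify that the constants can be taken uniformly over the compact set $C_h$ by invoking Lemma~\ref{lem:c-H-unif}.

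First I would establish uniqueness of the blowup at a fixed $x_0\in\mathcal R_\uu$ and upgrade the existence to a full-limit statement. A direct comparison of the two rescalings gives
\[
\uu_{x_0,t}^\phi \;=\; \frac{t^\ka}{\phi(t)}\,\uu_{x_0,t} \;=\; e^{(\ka b/\al)t^\al}\,\uu_{x_0,t},
\]
so any subsequential $C^1_{\loc}$-limit of $\uu_{x_0,t}$ (existing by Lemma~\ref{lem:blowup-exist}) is also an $L^1(\p B_1)$-limit of $\uu_{x_0,t}^\phi$ along the same subsequence. Lemma~\ref{lem:rot-est} shows that $\{\uu_{x_0,t}^\phi\}_{t>0}$ is Cauchy in $L^1(\p B_1)$ as $t\to 0^+$, hence has a unique limit there, which must coincide with the $\ka$-homogeneous blowup $\uu_{x_0,0}$. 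Proposition~\ref{prop:all-blowup-R} then identifies $\uu_{x_0,0}\in\Hf_{x_0}$. Sending $s\to 0^+$ in the rotation estimate yields, at the fixed point $x_0$,
\[
\int_{\p B_1}|\uu_{x_0,t}^\phi - \uu_{x_0,0}|\le C t^{\de/2}, \quad 0<t<t_0.
\]

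\textbf{Main obstacle: uniformity over $C_h$.} I expect the delicate point to be the proof that the constants $C$ and $t_0$ can be chosen \emph{independent} of $x_0\in C_h$. For this, I would trace the chain Theorem~\ref{thm:opt-growth} $\Rightarrow$ Lemma~\ref{lem:Weiss-decay} $\Rightarrow$ Lemma~\ref{lem:rot-est}, and show that each intermediate constant is uniform on $C_h$. The crucial ingredient is the uniform applicability of the epiperimetric inequality (Theorem~\ref{epi}): applying Lemma~\ref{lem:c-H-unif} with $\e$ chosen smaller than the $W^{1,2}$-threshold $\de$ of Theorem~\ref{epi} (using $C^1(\overline{B_1})\hookrightarrow W^{1,2}(B_1)$), we obtain a radius $r_1>0$ \emph{independent of $x_0\in C_h$} such that $\dist(\cc_{x_0,r},\Hf_{x_0})<\de$ for every $x_0\in C_h$ and $0<r<r_1$. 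Hence Theorem~\ref{epi} applies uniformly inside the proof of Lemma~\ref{lem:Weiss-decay}.

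The remaining quantities appearing in the proofs of Lemmas~\ref{lem:Weiss-decay} and \ref{lem:rot-est}, namely the bounds on $W(\uu,x_0,x_0,r)$, on $\int_{\p B_1}|\uu_{x_0,r}|^2$, and on the threshold $\cB_{x_0}=M_{x_0}(\h)$ for $\h\in\Hf_{x_0}$, are controlled uniformly on $C_h$: the first two by Theorem~\ref{thm:opt-growth} (whose constants depend only on $n,\al,M,\ka,E(\uu,1)$), the last by the continuous dependence $\be_{x_0}=\la_+(x_0)^{\ka/2}(\ka(\ka-1))^{-\ka/2}$ and the compactness of $C_h$. Putting these uniform bounds into the differential inequality in Lemma~\ref{lem:Weiss-decay} yields $W(\uu,x_0,x_0,r)-W(\uu,x_0,x_0,0+)\le C r^\de$ with $C,r$ uniform on $C_h$, and feeding this into the dyadic summation argument of Lemma~\ref{lem:rot-est} produces the claimed uniform rotation estimate. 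Passing to the limit $s\to 0^+$ as in the single-point case then concludes the proof.
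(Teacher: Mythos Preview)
Your proposal is correct and follows essentially the same approach as the paper: invoke Lemma~\ref{lem:c-H-unif} to guarantee the epiperimetric inequality applies uniformly for $x_0\in C_h$, feed this into Lemmas~\ref{lem:Weiss-decay} and~\ref{lem:rot-est} to get a uniform rotation estimate, then pass $s\to 0^+$ along a subsequence converging to a blowup in $\Hf_{x_0}$ and deduce uniqueness from the $\ka$-homogeneity of blowups. Your treatment of the uniformity over $C_h$ is in fact more explicit than the paper's, which simply cites Lemmas~\ref{lem:rot-est} and~\ref{lem:c-H-unif} together without spelling out the tracing of constants.
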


\begin{proof}
By Lemma~\ref{lem:rot-est} and Lemma~\ref{lem:c-H-unif}, $$
\int_{\p B_1}|\uu_{x_0,t}^\phi-\uu_{x_0,s}^\phi|\le Ct^{\de/2},\quad s<t<t_0.
$$
By the definition of $\mathcal{R}_\uu$, for a subsequence of $t_j\to0+$ we have $\uu_{x_0,t_j}\to\uu_{x_0,0}\in\Hf_{x_0}$. From $\lim_{t_j\to0+}\frac{\phi(t)}{t^\ka}=1$, we also have $\uu_{x_0,t_j}^\phi\to\uu_{x_0,0}$. Taking $s=t_j$ in the above inequality and passing to the limit, we get $$
\int_{\p B_1}|\uu_{x_0,t}^\phi-\uu_{x_0,0}|\le Ct^{\de/2},\quad t<t_0.
$$
To prove the uniqueness, we let $\tilde{\uu}_{x_0,0}$ be another blowup. Then $$
\int_{\p B_1}|\tilde{\uu}_{x_0,0}-\uu_{x_0,0}|=0.
$$
We proved in Lemma~\ref{lem:blowup-exist} that every blowup is $\ka$-homogeneous in $\R^n$, thus $\tilde{\uu}_{x_0,0}=\uu_{x_0,0}$ in $\R^n$.
\end{proof}

As a consequence of the previous results, we prove the regularity of $\mathcal R_\uu$.

\begin{proof}[Proof of Theorem~\ref{thm:reg-set}]
The proof of the theorem is similar to those of Theorem~3 in \cite{DeSJeoSha21} and Theorem~1.4 in \cite{FotShaWei21}.

\medskip\noindent\emph{Step 1.} Let $x_0\in\mathcal{R}_\uu$. By Lemma~\ref{lem:rel-open} and Lemma~\ref{lem:rescaling-blowup-est}, there exists $\rho_0>0$ such that $B_{2\rho_0}(x_0)\subset B_1$, $B_{2\rho_0}(x_0)\cap\Gamma^\ka(\uu)=B_{2\rho_0}(x_0)\cap\mathcal{R}_\uu$ and $$
\int_{\p B_1}|\uu_{x_1,r}^\phi-\be_{x_1}\max(x\cdot\nu(x_1),0)^\ka\mathbf{e}(x_1)|\le Cr^{\de/2},
$$
for any $x_1\in \Gamma^\ka(\uu)\cap \overline{B_{\rho_0}(x_0)}$ and for any $0<r<\rho_0$. We then claim that $x_1\longmapsto\nu(x_1)$ and $x_1\longmapsto\mathbf{e}(x_1)$ are H\"older continuous of order $\gamma$ on $\Gamma^\ka(\uu)\cap\overline{B_{\rho_1}(x_0)}$ for some $\gamma=\gamma(n,\al,q,\eta)>0$ and $\rho_1\in(0,\rho_0)$. Indeed, we observe that for $x_1$ and $x_2$ near $x_0$ and for small $r>0$,
\begin{align*}
    &\int_{\p B_1}|\be_{x_1}\max(x\cdot\nu(x_1),0)^\ka\mathbf{e}(x_1)-\be_{x_2}\max(x\cdot\nu(x_2),0)^\ka\mathbf{e}(x_2)|\,dS_x\\
    &\qquad\le 2Cr^{\de/2}+\int_{\p B_1}|\uu_{x_1,r}^\phi-\uu_{x_2,r}^\phi|\\
    &\qquad\le 2Cr^{\de/2}+\frac1{\phi(r)}\int_{\p B_1}\int_0^1|\D\uu(rx+(1-t)x_1+tx_2)||x_1-x_2|\,dt\,dS_x\\
    &\qquad\le 2Cr^{\de/2}+C\frac{|x_1-x_2|}{r^\ka}.
\end{align*}
Moreover, \begin{align*}
    &\int_{\p B_1}|\be_{x_1}\max(x\cdot\nu(x_2),0)^\ka\mathbf{e}(x_2)-\be_{x_2}\max(x\cdot\nu(x_2),0)^\ka\mathbf{e}(x_2)|\,dS_x\\
    &\qquad\le |\be_{x_1}-\be_{x_2}|\int_{\p B_1}\max(x\cdot\nu(x_2),0)^\ka|\mathbf{e}(x_2)|\\
    &\qquad\le C|\la_+(x_1)^{\ka/2}-\la_+(x_2)^{\ka/2}|\le C|x_1-x_2|^\al.
\end{align*}
The above two estimates give \begin{align*}\begin{multlined}
    \be_{x_1}\int_{\p B_1}|\max(x\cdot\nu(x_1),0)^\ka\mathbf{e}(x_1)-\max(x\cdot\nu(x_2),0)^\ka\mathbf{e}(x_2)|\,dS_x\\
    \le C\left(r^{\de/2}+\frac{|x_1-x_2|}{r^\ka}+|x_1-x_2|^\al\right).
\end{multlined}\end{align*}
Taking $r=|x_1-x_2|^{\frac2{\de+2\ka}}$, we get $$
\int_{\p B_1}|\max(x\cdot\nu(x_1),0)^\ka\mathbf{e}(x_1)-\max(x\cdot\nu(x_2),0)^\ka\mathbf{e}(x_2)|\,dS_x\le C|x_1-x_2|^\gamma,
$$
where $\gamma=\min\{\frac{\de}{\de+2\ka},\al\}$. Combining this with the following estimate (see equation (21) in the proof of Theorem~1.4 in \cite{FotShaWei21}) \begin{align*}
\begin{multlined}|\nu(x_1)-\nu(x_2)|+|\mathbf{e}(x_1)-\mathbf{e}(x_2)|\\
\le C\int_{\p B_1}|\max(x\cdot\nu(x_1),0)^\ka\mathbf{e}(x_1)-\max(x\cdot\nu(x_2),0)^\ka\mathbf{e}(x_2)|,
\end{multlined}\end{align*}
we obtain the  H\"older continuity of $x_1\longmapsto\nu(x_1)$ and $x_1\longmapsto\mathbf{e}(x_1)$.

\medskip\noindent\emph{Step 2.} We claim that for every $\e\in(0,1)$, there exists $\rho_\e\in(0,\rho_1)$ such that for $x_1\in\Gamma^\ka(\uu)\cap\overline{B_{\rho_1}(x_0)}$ and $y\in\overline{B_{\rho_\e}(x_1)}$, \begin{align}
    &\label{eq:reg-set-cone-1}\uu(y)=0\quad\text{if } (y-x_1)\cdot\nu(x_1)<-\e|y-x_1|,\\
    &\label{eq:reg-set-cone-2}|\uu(y)|>0\quad\text{if }(y-x_1)\cdot\nu(x_1)>\e|y-x_1|.
\end{align}
Indeed, if \eqref{eq:reg-set-cone-1} does not hold, then we can take a sequence $\Gamma^\ka(\uu)\cap\overline{B_{\rho_1}(x_0)}\ni x_j\to\bar{x}$ and a sequence $y_j-x_j\to 0$ as $j\to\infty$ such that $$
|\uu(y_j)|>0\quad\text{and}\quad(y_j-x_j)\cdot\nu(x_j)<-\e|y_j-x_j|.
$$
Then we consider $\uu_j(x):=\frac{\uu(x_j+|y_j-x_j|x)}{|y_j-x_j|^\ka}$ and observe that for $z_j:=\frac{y_j-x_j}{|y_j-x_j|}\in\p B_1$, $|\uu_j(z_j)|>0$ and $z_j\cdot\nu(x_j)<-\e|z_j|$. As we have seen in the proof of Lemma~\ref{lem:c-H-unif}, the rescalings $\uu_j$ at $x_j\in\mathcal R_\uu$ converge in $C^1_{\loc}(\R^n)$ to a $\ka$-homogeneous solution $\uu_0$ of $\Delta\uu_0=f(\bar x,\uu_0)$. By applying Lemma~\ref{lem:rescaling-blowup-est} to $\uu_j$, we can see that $\uu_0(x)=\be_{\bar{x}}\max(x\cdot\nu(\bar{x}),0)^\ka\mathbf{e}(\bar{x})$. Then, for $K:=\{z\in\p B_1\,:\,z\cdot\nu(\bar{x})\le-\e/2|z|\}$, we have that $z_j\in K$ for large $j$ by \emph{Step 1}. We also consider a bigger compact set $\tilde{K}:=\{z\in\R^n\,:\,1/2\le|z|\le2, z\cdot\nu(\bar{x})\le-\e/4|z|\}$, and let $t:=\min\{\dist(K,\p\tilde{K}),r_0\}$, where $r_0=r_0(n,\al,q,M)$ is as in Lemma~\ref{imp}, so that $B_t(z_j)\subset\tilde{K}$. By applying Lemma~\ref{lem:nondeg-int}, we obtain $$
\sup_{\tilde{K}}\left(|\D\uu_j|^2+|\uu_j|^{q+1}\right)\ge c(n,t)\int_{B_t(z_j)}\left(|\D\uu_j|^2+|\uu_j|^{q+1}\right)\ge c(n,\al,q,M,\e),
$$
which gives $$
\sup_{\tilde{K}}\left(|\D\uu_0|+|\uu_0|^{q+1}\right)>0.
$$
However, this is a contradiction since $\uu_0(x)=\be_{\bar{x}}\mathbf{e}(\bar{x})\max(x\cdot\nu(\bar{x}),0)^\ka=0$ in $\tilde{K}$.

On the other hand, if \eqref{eq:reg-set-cone-2} is not true, then we take a sequence $\Gamma(\uu)\cap\overline{B_{\rho_1}(x_0)}\ni x_j\to\bar{x}$ and a sequence $y_j-x_j\to 0$ such that $|\uu(y_j)|=0$ and $(y_j-x_j)\cdot\nu(x_j)>\e|y_j-x_j|$. For $\uu_j$, $\uu_0(x)=\be_{\bar{x}}\max(x\cdot\nu(\bar{x}),0)^\ka\mathbf{e}(\bar{x})$ and $z_j$ as above, we will have that $\uu_j(z_j)=0$ and $z_j\in K':=\{z\in\p B_1\,:\,z\cdot\nu(\bar{x})\ge\e/2|z|\}$. Over a subsequence $z_j\to z_0\in K'$ and we have $\uu_0(z_0)=\lim_{j\to\infty}\uu_j(z_j)=0$. This is a contradiction since the half-space solution $\uu_0$ is nonzero in $K'$.

\medskip\noindent\emph{Step 3.} By rotations we may assume that $\nu(x_0)=\mathbf{e}_n$ and $\mathbf{e}(x_0)=\mathbf{e}_1$. Fixing $\e=\e_0$, by \emph{Step 2} and the standard arguments, we conclude that there exists a Lipschitz function $g:\R^{n-1}\to\R$ such that for some $\rho_{\e_0}>0$,\begin{align*}
    &B_{\rho_{\e_0}}(x_0)\cap\{\uu=0\}=B_{\rho_{\e_0}}(x_0)\cap\{x_n\le g(x')\},\\
    &B_{\rho_{\e_0}}(x_0)\cap\{|\uu|>0\}=B_{\rho_{\e_0}}(x_0)\cap\{x_n> g(x')\}.
\end{align*}
Now, taking $\e\to0$, we can see that $\Gamma(\uu)$ is differentiable at $x_0$ with normal $\nu(x_0)$. Recentering at any $y\in B_{\rho_{\e_0}}(x_0)\cap\Gamma(\uu)$ and using the H\"older continuity of $y\longmapsto\nu(y)$, we conclude that $g$ is $C^{1,\gamma}$. This completes the proof.
\end{proof}


\appendix
\section{Example of almost minimizers}\label{sec:ex}

\begin{example}
Let $\uu$ be a solution of the system $$
\Delta\uu+\mathbf{b}(x)\cdot\D\uu-\la_+(x)|\uu^+|^{q-1}\uu^++\la_-(x)|\uu^-|^{q-1}\uu^-=0\quad\text{in }B_1,
$$
where $\mathbf{b}\in L^p(B_1)$, $p>n$, is the velocity field. Then $\uu$ is an almost minimizer of the functional $\int\left(|\D\uu|^2+2F(x,\uu)\right)\,dx$ with a gauge function $\omega(r)=C(n,p,\|\mathbf{b}\|_{L^p(B_1)})r^{1-n/p}$.
\end{example}

\begin{proof}
Let $B_r(x_0)\Subset B_1$ and $\vv\in\uu+W^{1,2}_0(B_r(x_0))$. Without loss of generality assume that $x_0=0$. Then \begin{align*}
    &\int_{B_r}\D\uu\cdot\D(\uu-\vv)\\
    &=\int_{B_r}\mathbf{b}\cdot\D\uu\cdot(\uu-\vv)-\la_+|\uu^+|^{q-1}\uu^+\cdot(\uu-\vv)+\la_-|\uu^-|^{q-1}\uu^-\cdot(\uu-\vv)\\
    &=\int_{B_r}\mathbf{b}\cdot\D\uu\cdot(\uu-\vv)-\la_+|\uu^+|^{q+1}-\la_-|\uu^-|^{q+1}+\la_+|\uu^+|^{q-1}\uu^+\cdot\vv-\la_-|\uu^-|^{q-1}\uu^-\cdot\vv.
\end{align*}
To estimate the integral in the last line, we note that the following estimate was obtained in the proof of Example~1 in \cite{DeSJeoSha21}: $$
\int_{B_r}\mathbf{b}\cdot\D\uu\cdot(\uu-\vv)\le C(n,p,\|\textbf{b}\|_{L^p(B_1)})r^\gamma\int_{B_r}\left(|\D\uu|^2+|\D\vv|^2\right),\qquad\gamma=1-n/p.
$$
In addition, from $\uu^+\cdot\vv\le\uu^+\cdot\vv^+\le|\uu^+||\vv^+|$ and $-\uu^-\cdot\vv\le\uu^-\cdot\vv^-\le|\uu^-||\vv^-|$, we have \begin{align*}
    &\la_+|\uu^+|^{q-1}\uu^+\cdot\vv-\la_-|\uu^-|^{q-1}\uu^-\cdot\vv\\
    &\qquad\le \la_+|\uu^+|^q|\vv^+|+\la_-|\uu^-|^q|\vv^-|\\
    &\qquad\le \la_+\left(\frac{q}{1+q}|\uu^+|^{1+q}+\frac1{1+q}|\vv^+|^{1+q}\right)+\la_-\left(\frac{q}{1+q}|\uu^-|^{1+q}+\frac1{1+q}|\vv^-|^{1+q}\right),
\end{align*}
and thus \begin{align*}
    &-\la_+|\uu^+|^{q+1}-\la_-|\uu^-|^{q+1}+\la_+|\uu^+|^{q-1}\uu^+\cdot\vv-\la_-|\uu^-|^{q-1}\uu^-\cdot\vv\\
    &\qquad\le \frac{\la_+}{1+q}\left(|\vv^+|^{1+q}-|\uu^+|^{1+q}\right)+\frac{\la_-}{1+q}\left(|\vv^-|^{1+q}-|\uu^-|^{1+q}\right).
\end{align*}
Combining the above equality and inequalities yields \begin{align*}
    \int_{B_r}\D\uu\cdot\D(\uu-\vv)&\le Cr^\gamma\int_{B_r}\left(|\D\uu|^2+|\D\vv|^2\right)+\frac1{1+q}\int_{B_r}\la_+\left(|\vv^+|^{1+q}-|\uu^+|^{1+q}\right)\\
    &\qquad+\frac1{1+q}\int_{B_r}\la_-\left(|\vv^-|^{1+q}-|\uu^-|^{1+q}\right).
\end{align*}
From $$
\int_{B_r}\D\uu\cdot\D(\uu-\vv)=\int_{B_r}|\D\uu|^2-\D\uu\cdot\D\vv\ge1/2\int_{B_r}\left(|\D\uu|^2-|\D\vv|^2\right),
$$
we further have \begin{align*}
    &\int_{B_r}(1/2-Cr^\gamma)|\D\uu|^2+\frac1{1+q}\left(\la_+|\uu^+|^{1+q}+\la_-|\uu^-|^{1+q}\right)\\
    &\qquad\le \int_{B_r}(1/2+Cr^\gamma)|\D\vv|^2+\frac1{1+q}\left(\la_+|\vv^+|^{1+q}+\la_-|\vv^-|^{1+q}\right).
\end{align*}
This implies that $$
(1-Cr^\gamma)\int_{B_r}\left(|\D\uu|^2+2F(x,\uu)\right)\le (1+Cr^\gamma)\int_{B_r}\left(|\D\vv|^2+2F(x,\vv)\right),
$$
and hence we conclude that for $0<r<r_0$
$$
\int_{B_r}\left(|\D\uu|^2+2F(x,\uu)\right)\,dx\le (1+Cr^\gamma)\int_{B_r}\left(|\D\vv|^2+2F(x,\vv)\right)\,dx,
$$
with $r_0$ and $C$ depending only on $n$, $p$, $\|\mathbf{b}\|_{L^p(B_1)}$.
\end{proof}


\end{document}